\newtheorem{thm}{Theorem}[section]
\newtheorem{cor}[thm]{Corollary}
\newtheorem{lem}{Lemma}
\newtheorem{prop}[thm]{Proposition}
\theoremstyle{definition}
\theoremstyle{remark}
\numberwithin{equation}{section}
\newcommand{\ord}{\textup{ord}\,}
\newcommand{\card}{\textup{card}\,}
\newcommand{\res}{\textup{res}\,}
\newcommand{\bigO}[1]{O \left( #1 \right)}
\begin{document}

\title[The asymptotic formula for Waring's Problem in function fields]{The asymptotic formula for Waring's Problem \\ in function fields}

\author{Shuntaro Yamagishi}
\address{Department of Pure Mathematics \\
University of Waterloo \\
Waterloo, ON\\  N2L 3G1 \\
Canada}
\email{syamagis@uwaterloo.ca}
\indent

\date{Revised on \today}

\begin{abstract}
Let $\mathbb{F}_q[t]$ be the ring of polynomials over $\mathbb{F}_q$, the finite field of $q$ elements, and let $p$ be the characteristic of $\mathbb{F}_q$.
We denote $\widetilde{G}_q(k)$ to be the least integer $t_0$ with the property that for all
$s \geq t_0$, one has the expected asymptotic formula in Waring's problem over $\mathbb{F}_q[t]$
concerning sums of $s$ $k$-th powers of polynomials in $\mathbb{F}_q[t]$.
For each $k$ not divisible by $p$, we derive a minor arc bound from Vinogradov-type estimates, and obtain bounds on $\widetilde{G}_q(k)$ that
are quadratic in $k$, in fact linear in $k$ in some special cases, in contrast to the bounds
that are exponential in $k$ available only when $k < p$. We also obtain estimates related to the slim exceptional sets
associated to the asymptotic formula.
\end{abstract}

\subjclass[2010]
{Primary 11P05, 11P55, 11T55  ; Secondary: 11T23 }

\keywords{Waring's problem, Hardy-Littlewood circle method, function fields}

\maketitle

\section{Introduction}
\label{Introduction}
In the early twentieth century, Hardy and Littlewood developed the technique now known as the Hardy-Littlewood circle method in a series of papers on Waring's problem. Waring's problem is regarding the representation of a natural number as a sum of integer powers.
More precisely,
given $n, s, k \in \mathbb{N}$, $k \geq 2$, we let
$$
R_{s,k}(n) = \# \{ (x_1, ..., x_s) \in \mathbb{N}^s: x_1^k + ... + x_s^k = n, \ x_i \leq n^{1/k} \ (1 \leq i \leq s) \},
$$
and we consider the smallest number $s$ such that $R_{s,k}(n) > 0$.
There are various questions studied related to Waring's problem, one of which is to find the minimum number of variables required to establish the expected asymptotic formula. This is an important aspect of Waring's problem as a ``brief review of the progress achieved in nearly a
century of development of the Hardy-Littlewood (circle) method reveals that a substantial part has
originated in work devoted to the challenge of establishing the asymptotic formula in Waring's problem'' \cite{W1}.
As stated in \cite{W1}, by a heuristic application of the circle method, one expects that when $k \geq 3$ and $s \geq k+1$,
\begin{equation}
\label{asympt for int}
R_{s,k}(n) = \frac{\Gamma(1 + 1/k)^s}{\Gamma(s/k)}\mathfrak{S}_{s,k}(n) n^{\frac{s}{k}-1} + o(n^{\frac{s}{k}-1}),
\end{equation}
where
$$
\mathfrak{S}_{s,k}(n) = \sum_{a=1}^{\infty} \sum_{ \substack{ a=1 \\ (a,q)=1 }  }^{\infty} \left( \frac{1}{q} \sum_{r=1}^q e^{2 \pi i (a r^k /q) } \right)^s
e^{- 2 \pi i ( n a /q)}.
$$
We note that subject to modest congruence conditions on $n$, one has $1 \ll \mathfrak{S}_{s,k}(n) \ll n^{\varepsilon}$
\cite[Chapter 4]{V1}. Let $\widetilde{G}(k)$ be the least integer $t_0$ with the property that,
for all $s \geq t_0$, and all sufficiently large natural numbers $n$, one has the asymptotic formula ~(\ref{asympt for int}). 
As a consequence of his recent work concerning Vinogradov's mean value theorem, Wooley has significantly improved
estimates on $\widetilde{G}(k)$ \cite{W, W1, W2}.
In particular, it was proved in \cite{W2} that
$ \widetilde{G}(k) \leq 2k^2 - 2k - 8 \ (k \geq 6)$.

In this paper, we consider an analogous problem 
in the setting of $\mathbb{F}_q[t]$,
where $\mathbb{F}_q$ is a finite field of $q$ elements. In other words, we consider the
asymptotic Waring's problem over $\mathbb{F}_q[t]$.
We later define $\widetilde{G}_q(k)$, an analgoue of
$\widetilde{G}(k)$ over $\mathbb{F}_q[t]$, and establish bounds on it. As the function field analogue of
Wooley's work on Vinogradov's mean value theorem \cite{W} has been established in \cite{LW1}
and its multidimensional version in \cite{KLZ}, it is natural to consider its consequences
in improving the number of variables required to establish the asymptotic formula in
Waring's problem over $\mathbb{F}_q[t]$. Here we accomplish this task by taking the approach of \cite{W1}.

Before we can state our main results, we need to introduce notation, some of which we
paraphrase from the material in introduction of \cite{LW}.
We denote the characteristic of $\mathbb{F}_q$, a positive prime number,
by ch$(\mathbb{F}_q) = p$. Unless we specify otherwise,
we always assume $p$ to be the characteristic of $\mathbb{F}_q$ even if it
is not explicitly stated so.
Let $k$ be an integer with $k \geq 2$, let $s \in \mathbb{N}$,
and consider a polynomial $n \in \mathbb{F}_q[t]$. 
We are interested in the representation of $n$ of the form
\begin{equation}
\label{shape}
n = x_1^k + x_2^k + ... + x_s^k,
\end{equation}
where $x_i \in \mathbb{F}_q[t] \ (1 \leq i \leq s)$. It is possible that a representation
of the shape ~(\ref{shape}) is obstructed for every natural number $s$. For example, if the
characteristic $p$ of $\mathbb{F}_q$ divides $k$, then
$ x_1^k + x_2^k + ... + x_s^k =  \left( x_1^{k/p} + x_2^{k/p} + ... + x_s^{k/p} \right)^p$,
and thus $n$ necessarily fails to admit a representation of the shape ~(\ref{shape})
whenever $n \not \in \mathbb{F}_q[t^p]$, no matter how large $s$ may be. In order to
accommodate this and other intrinsic obstructions, we
define $\mathbb{J}_q^k[t]$ to be the additive closure of the set of $k$-th powers of
polynomials in $\mathbb{F}_q[t]$, and we restrict attention to
those $n$ lying in the subring $\mathbb{J}_q^k[t]$ of $\mathbb{F}_q[t]$.
It is also convenient to define $\mathbb{J}_q^k$ to be the additive closure of
the set of $k$-th powers of elements of $\mathbb{F}_q$.

Given $n \in \mathbb{J}_q^k[t]$, we say that $n$ is an \textit{exceptional element} of $\mathbb{J}_q^k[t]$
when its leading coefficient lies in $\mathbb{F}_q \backslash \mathbb{J}_q^k$, and in addition $k$
divides $\deg n$. As explained in \cite{LW}, the strongest constraint on the degrees of the variables that might still permit
the existence of a representation of the shape ~(\ref{shape}) is plainly $\deg x_i \leq \lceil (\deg n) / k \rceil \ (1 \leq i \leq s).$
When $p < k$, however, it is possible that $\mathbb{J}_q^k$
is not equal to $\mathbb{F}_q$, and then the leading coefficient of $n$ need not be an element of $\mathbb{J}_q^k$.
If $k$ divides $\deg n$, so that $n$ is an exceptional polynomial, such circumstances obstruct the existence
of a representation ~(\ref{shape}) of $n$ with variables $x_i$ satisfying the above constraint on their degrees.
For these reasons, following \cite{LW}, we define $P = P_k(n)$ by setting
\begin{equation}
\notag
P =
\begin{cases}
\big{ \lceil } \frac{\deg n}{k}  \big{ \rceil}, & \mbox{if } n \mbox{ is not exceptional}, \\
\frac{\deg n}{k} + 1, & \mbox{if } n \mbox{ is exceptional} .
\end{cases} \\
\end{equation}
In particular, when $n$ is not exceptional, then $P$ is the unique integer satisfying $k(P-1) < \deg n \leq kP$.
We say that $n$ admits a \textit{strict representation} as a sum of $s$ $k$-th powers when for some $x_i \in \mathbb{F}_q[t]$
with $\deg x_i \leq P_k(n) \ (1 \leq i \leq s)$, the equation ~(\ref{shape}) is satisfied.

For notational convenience, let $X = X_{k}(n):= P_k(n) + 1$, and
we define $I_X := \{ x \in \mathbb{F}_q[t] : \deg x < X \}$.
For $n$ a polynomial in $\mathbb{F}_q[t]$,
we denote $R_{s,k}(n)$ to be the number of strict representations of $n$, 
in other words
$$
R_{s,k}(n) = \# \{ (x_1, ..., x_s) \in (I_X)^s : x_1^k + ... + x_s^k = n \}.
$$
Though it is not explicit in the notation, $R_{s,k}(n)$ does depend on $q$.
Suppose the leading coefficient of the polynomial $n$ is $c(n)$. We define
$b = b(n)$ to be $c(n)$ when $k$ divides $\deg n$ and $n$ is not exceptional, and otherwise
we set $b(n)$ to be $0$. In addition, we write $J_{\infty}(n) = J_{\infty}(n;q)$ for the number
of solutions of the equation
$y_1^k + ...+ y_s^k = b$ with $(y_1, ..., y_s) \in \mathbb{F}_q^s \backslash \{ \mathbf{0} \}$.
Analogously to the case of integers, one expects the following asymptotic formula
\begin{equation}
\label{equation baaaaaaahhhhh}
R_{s,k}(n) =  \mathfrak{S}_{s,k}(n) J_{\infty}(n) q^{(s-k)P} + o\left( q^{(s-k)P} \right),
\end{equation}
where
$$
\mathfrak{S}_{s,k}(n) = \sum_{ \substack{   g \in \mathbb{F}_q[t] \\ g \text{ is  monic}  }}
\frac{1}{q^{ s (\deg g) } } \ \sum_{\substack{\deg a < \deg g \\ (a,g)=1 }} \left( \sum_{\deg r < \deg g} e(ar^k/g)\right)^s e(-na/g),
$$
to hold whenever $s$ is sufficiently large with respect to $k$. 
We postpone the definition
of the exponential function $e(\cdot)$ to Section \ref{Prelim}.
By making the circle method applicable over $\mathbb{F}_q[t]$, 
the following theorem was proved in \cite[Theorem 30]{RMK}. We note that the theorem stated below is
slightly different from the statement of \cite[Theorem 30]{RMK}. The reason for this difference is
explained in the paragraph before Theorem \ref{thm major arcs} on page \pageref{thm major arcs}.
\begin{thm}[Theorem 30, \cite{RMK}]
\label{thm kubota 1}
Suppose $3 \leq k < p$ and $s \geq 2^k + 1$. Let $n \in \mathbb{F}_q[t]$.
Then there exists $\epsilon > 0$ such that the
following asymptotic formula holds,
\begin{equation}
\label{equation baaaaaaahhhhh1}
R_{s,k}(n) =  \mathfrak{S}_{s,k}(n)  J_{\infty}(n) q^{(s-k)P} + O\left( q^{(s-k - \epsilon)P} \right),
\end{equation}
where
\begin{equation}
\label{coeff of asympt formula}
1 \ll \mathfrak{S}_{s,k}(n) J_{\infty}(n) \ll 1.
\end{equation}
\end{thm}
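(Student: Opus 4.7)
The plan is to implement the Hardy-Littlewood circle method over $\mathbb{F}_q[t]$ along the lines introduced by Kubota. Writing $f(\alpha) = \sum_{x \in I_X} e(\alpha x^k)$ and appealing to the orthogonality of $e(\cdot)$ on the function field torus $\mathbb{T}$ (to be recalled in Section \ref{Prelim}), one obtains the identity
\begin{equation*}
R_{s,k}(n) = \int_{\mathbb{T}} f(\alpha)^s e(-\alpha n) \, d\alpha.
\end{equation*}
One dissects $\mathbb{T}$ into major arcs $\mathfrak{M}$, consisting of Farey-type neighbourhoods of rationals $a/g$ with $\deg g$ below some threshold $\nu = \nu(k) P$, and minor arcs $\mathfrak{m} = \mathbb{T} \setminus \mathfrak{M}$. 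The main term will emerge from $\mathfrak{M}$ and the contribution from $\mathfrak{m}$ must be shown to be negligible.

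For the minor arcs, the hypothesis $k < p$ is crucial: it guarantees that the expansion of $(x+h)^k - x^k$ retains invertible binomial coefficients in $\mathbb{F}_q$, so that classical Weyl differencing can be iterated $k-1$ times without loss of rank. Combined with a Dirichlet-style approximation in $\mathbb{F}_q[t]$, this yields
\begin{equation*}
\sup_{\alpha \in \mathfrak{m}} \abs{f(\alpha)} \ll q^{P(1 - 2^{1-k}) + \epsilon P}.
\end{equation*}
The function field analogue of Hua's lemma, obtained from the same differencing machinery together with Parseval, supplies
\begin{equation*}
\int_{\mathbb{T}} \abs{f(\alpha)}^{2^k} \, d\alpha \ll q^{(2^k - k)P + \epsilon P}.
\end{equation*}
The threshold $s \geq 2^k + 1$ is then calibrated so that $s - 2^k \geq 1$ factors can be estimated pointwise by the Weyl bound while the remaining $2^k$ factors are absorbed into Hua's bound, yielding
\begin{equation*}
\int_{\mathfrak{m}} \abs{f(\alpha)}^s \, d\alpha \ll q^{(s-k)P - \delta P}
\end{equation*}
for some $\delta = \delta(k) > 0$, which is of admissible size.

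On the major arcs, for $\alpha = a/g + \beta$ with $a/g$ a Farey fraction and $\beta$ small, one approximates $f(\alpha) \approx q^{-\deg g} S(a,g) v(\beta)$, where $S(a,g)$ is a complete Gauss sum and $v(\beta)$ is an archimedean integral. Integrating against $e(-\alpha n)$ over $\mathfrak{M}$ and pushing the arc parameter $\nu$ to infinity decouples the arithmetic content into $\mathfrak{S}_{s,k}(n)$ and the archimedean content into $J_\infty(n) q^{(s-k)P}$, with completion tails controlled by the same Weyl/Hua inputs. For the two-sided estimates in (\ref{coeff of asympt formula}), absolute convergence and the upper bound $\mathfrak{S}_{s,k}(n) \ll 1$ follow from Weil-type bounds on the normalized Gauss sums once $s \geq 2k + 1$ and $p \nmid k$; the lower bounds $\mathfrak{S}_{s,k}(n) \gg 1$ and $J_\infty(n) \gg 1$ come from a Hensel-style local analysis at each prime of $\mathbb{F}_q[t]$ and at the archimedean place, where solvability is automatic when $s$ is as large as $2^k + 1$ and the characteristic restriction $k < p$ prevents the Frobenius obstruction described in the introduction.

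The principal obstacle is to choose the dissection parameter $\nu$ so that the Weyl bound on $\mathfrak{m}$ and the completion error on $\mathfrak{M}$ reconcile into a uniform saving of $q^{-\epsilon P}$; this is exactly where the exponential threshold $s \geq 2^k + 1$ and the hypothesis $k < p$ enter in tandem. Since the remainder of the paper develops a genuinely new approach using Vinogradov-type estimates to replace the exponential growth in $k$ by polynomial growth, the present theorem can be viewed as the classical baseline obtained by direct transcription of the integer-case strategy to $\mathbb{F}_q[t]$.
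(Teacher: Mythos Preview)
Your outline is correct and essentially matches the argument. Note, however, that this theorem is not proved in the present paper: it is cited as Theorem~30 of Kubota~\cite{RMK}, with only a cosmetic reformulation coming from the choice of $P = P_k(n)$ following~\cite{LW} (see the paragraph preceding Theorem~\ref{thm major arcs}). The paper's own contribution here is limited to Theorem~\ref{thm major arcs}, which repackages the major arc analysis so as to yield the cleaner form~(\ref{equation baaaaaaahhhhh1}); the minor arc treatment via Weyl differencing and Hua's lemma that you describe is exactly Kubota's and is not reproduced. One minor sharpening worth recording: the lower bound $\mathfrak{S}_{s,k}(n) \gg 1$ and $J_{\infty}(n) \geq 1$ in fact require only $s \geq 2k+1$ and $s \geq k+1$ respectively (see \cite[Lemma~5.2, Lemma~5.3]{LW}), not the full $s \geq 2^k+1$ you invoke; the larger threshold is needed solely for the minor arc bound.
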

Note that the implicit constants in the theorem may depend on $k$, $s$, and $q$, 
where the constant in ~(\ref{equation baaaaaaahhhhh1}) may also depend on $\epsilon$, but
they are independent of $n$ and $P$. 

We denote $\widetilde{G}_q(k)$ to be the least integer $t_0$ with
the property that, for all $s \geq t_0$, and all $n \in \mathbb{J}_q^k[t]$ with $\deg n$ sufficiently large, one has the
above asymptotic formula ~(\ref{equation baaaaaaahhhhh}).
Thus, in this language we have the following corollary as an immediate consequence of Theorem \ref{thm kubota 1}, except for the case $k=2$.
(The estimate on $\widetilde{G}_q(2)$ is treated in the paragraph after the proof of Theorem \ref{thm major arcs} on page \pageref{major k=2}.)

\begin{cor}
\label{cor kubota 1}
Suppose $2 \leq k < p$. Then we have
\begin{equation}
\notag
\widetilde{G}_q(k) \leq 
\begin{cases}
2^k + 1, & \mbox{if  } k \geq 3, \\
5, & \mbox{if  } k = 2.
\end{cases} \\
\end{equation}
\end{cor}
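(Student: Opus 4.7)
The plan is to observe that the $k \geq 3$ part is essentially a bookkeeping exercise built directly on Theorem \ref{thm kubota 1}, while the $k=2$ part is deferred to a separate treatment. Specifically, for $3 \leq k < p$ and $s \geq 2^k + 1$, Theorem \ref{thm kubota 1} gives
\begin{equation*}
R_{s,k}(n) = \mathfrak{S}_{s,k}(n) J_{\infty}(n) q^{(s-k)P} + O\bigl(q^{(s-k-\epsilon)P}\bigr)
\end{equation*}
for some fixed $\epsilon > 0$, together with the bound $1 \ll \mathfrak{S}_{s,k}(n) J_{\infty}(n) \ll 1$. The first step of the argument is just to notice that the error term $O(q^{(s-k-\epsilon)P}) = q^{-\epsilon P}\cdot O(q^{(s-k)P})$ is $o(q^{(s-k)P})$ as $P \to \infty$, which is the regime forced by letting $\deg n \to \infty$ through $n \in \mathbb{J}_q^k[t]$. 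Hence the asymptotic formula (\ref{equation baaaaaaahhhhh}) holds, and the main term genuinely has the order of magnitude $q^{(s-k)P}$ so that the $o$-statement is meaningful.

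Second, I would unpack the definition of $\widetilde{G}_q(k)$: it is the least $t_0$ such that (\ref{equation baaaaaaahhhhh}) holds for every $s \geq t_0$ and every $n \in \mathbb{J}_q^k[t]$ of sufficiently large degree. Since the preceding paragraph shows that $s = 2^k+1$ (and, by the same argument, every larger $s$) satisfies this property, we may take $t_0 = 2^k+1$, giving $\widetilde{G}_q(k) \leq 2^k + 1$ for $3 \leq k < p$. I do not anticipate any genuine obstacles here; the only mild care required is to verify that the implicit constants in Theorem \ref{thm kubota 1}, while allowed to depend on $k$, $s$, $q$, and $\epsilon$, are independent of $n$ and $P$, so that the $o(q^{(s-k)P})$ statement is uniform in the required sense as $\deg n \to \infty$.

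Finally, for $k=2$, I would simply cite the forthcoming treatment: the paper explicitly defers the case $\widetilde{G}_q(2) \leq 5$ to the discussion following the proof of Theorem \ref{thm major arcs}, where the $k=2$ major arc analysis is carried out separately (the reason being that Theorem \ref{thm kubota 1} is stated only for $k \geq 3$). Thus no new work is required at this point beyond acknowledging this forward reference.
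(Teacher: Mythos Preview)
Your proposal is correct and matches the paper's approach exactly: the paper states that the $k\geq 3$ case is an immediate consequence of Theorem~\ref{thm kubota 1}, and explicitly defers the $k=2$ case to the paragraph following the proof of Theorem~\ref{thm major arcs}. Your write-up simply makes explicit the routine passage from the $O(q^{(s-k-\epsilon)P})$ error term to the $o(q^{(s-k)P})$ form required by the definition of $\widetilde{G}_q(k)$.
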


It is worth mentioning that one of the main advantages of
using Vinogradov-type estimates established in \cite{KLZ} or \cite{LW1} is that we can avoid
the use of Weyl differencing as the primary tool during the computation of minor arc bounds, which is the source of the restriction $k < p$
in Theorem \ref{thm kubota 1} and Corollary \ref{cor kubota 1}. Thus, via Vinogradov-type estimates we can obtain an estimate for $\widetilde{G}_q(k)$
for a larger range of $k$, which is for all $k$ not divisible by $p$.

We are now ready to state our main results. To avoid clutter in the exposition, we
present the cases $k > p$ and $ k < p$ separately. When $k > p$, as a result of our approach
we further consider three cases, $p \nmid (k-1)$, $k = p^b + 1$, and $k = m p^b + 1$, where $b, m \in\mathbb{N}$ and $p \nmid m$.
Throughout the paper, whenever we write $k = m p^b + 1$ we are assuming $b, m \in\mathbb{N}$ and $p \nmid m$,
even when these conditions are not explicitly stated.

\begin{thm}
\label{asymptotic stuff} Let $k \geq 3$ be an integer, where $p \nmid k$.
Suppose $k>p$, then we have 
\begin{eqnarray}
\notag
\widetilde{G}_q(k) \leq
\left\{
    \begin{array}{lll}
         2k \left( k - \Big{\lfloor} \frac{k}{p} \Big{\rfloor} \right) - 5 + \Big{\lfloor} \frac{ 6 \lfloor k/p \rfloor - 4  }{k-2} \Big{\rfloor},
         &\mbox{if } p \nmid (k-1),\\ \\
         4k + 5,
         &\mbox{if } k = p^b + 1,\\  \\
         \left(2 - \frac{2}{p}\right)k^2 - 2(p^b - p^{b-1} - 2)k - c_{k}, &\mbox{if } k = m p^b + 1 \mbox{ and }m>1, \\
    \end{array}
\right.
\end{eqnarray}
\newline
where $c_{k} = 2 \left( p^b - p^{b-1} - 1 - \frac{1}{p}  \right) + \Big{\lfloor} \frac{(m-1)(1-1/p)}{2} \Big{\rfloor}$.
\end{thm}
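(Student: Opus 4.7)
The plan is to execute the Hardy--Littlewood circle method over $\mathbb{F}_q[t]$. Setting $f(\alpha) = \sum_{x \in I_X} e(\alpha x^k)$, orthogonality of the exponential gives $R_{s,k}(n) = \int_{\mathbb{T}} f(\alpha)^s e(-\alpha n)\, d\alpha$ over the function-field analogue $\mathbb{T}$ of the circle. Dissect $\mathbb{T}$ into major arcs $\mathfrak{M}$ and minor arcs $\mathfrak{m}$. On $\mathfrak{M}$, the standard analysis (the major-arc theorem referenced as Theorem \ref{thm major arcs} in the excerpt, whose proof goes through verbatim under the weaker hypothesis $p \nmid k$) delivers the expected main term $\mathfrak{S}_{s,k}(n) J_\infty(n) q^{(s-k)P}$ with the lower bound $\mathfrak{S}_{s,k}(n) J_\infty(n) \gg 1$ on elements of $\mathbb{J}_q^k[t]$. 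The substance of the proof is therefore to show the minor-arc contribution is $o(q^{(s-k)P})$ once $s$ exceeds the stated threshold.

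For the minor arcs, I will use the familiar interpolation
$$
\int_{\mathfrak{m}} |f(\alpha)|^s\, d\alpha \;\leq\; \Bigl(\sup_{\alpha \in \mathfrak{m}} |f(\alpha)|\Bigr)^{s-2t} \int_{\mathbb{T}} |f(\alpha)|^{2t}\, d\alpha.
$$
For the pointwise bound on $\mathfrak{m}$, I will establish a Weyl-type inequality of the shape $\sup_{\alpha \in \mathfrak{m}} |f(\alpha)| \ll q^{X(1-\sigma_k)}$ where $\sigma_k>0$ is a Weyl exponent depending on $k$ and $p$. Crucially, this pointwise bound will be obtained not via Weyl differencing (which would impose $k < p$) but by linearising $x^k$ through the multidimensional Vinogradov mean-value apparatus of \cite{KLZ} (together with its diagonal precursor \cite{LW1}); this is precisely what removes the characteristic restriction. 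For the second factor, I take $2t$ at the threshold where the function-field Vinogradov mean value theorem yields the essentially optimal estimate $\int_{\mathbb{T}} |f(\alpha)|^{2t}\, d\alpha \ll q^{(2t-k)X + \eta}$ for arbitrary $\eta>0$. Combining the two, the minor-arc integral is $o(q^{(s-k)X})$ provided $\sigma_k(s-2t) > \eta$, which after letting $\eta \downarrow 0$ yields a lower threshold on $s$ that is linear in the Vinogradov parameter $t$ and inversely proportional to $\sigma_k$.

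The three cases in the statement arise from the dependence of $\sigma_k$ and of the admissible $t$ on the arithmetic of $k$ relative to $p$. When $p \nmid (k-1)$, the Vinogradov-derived Weyl exponent is most favourable and a direct optimisation produces the bound $2k(k - \lfloor k/p \rfloor) - 5 + \lfloor (6\lfloor k/p\rfloor - 4)/(k-2)\rfloor$; the floor corrections come from rounding the admissible Vinogradov parameter to an integer. In the sharpest special case $k = p^{b}+1$, the derivatives of $x^k$ collapse in a particularly clean $p$-adic way, sharpening $\sigma_k$ enough that the balance becomes linear and yields $4k+5$. In the intermediate regime $k = m p^{b}+1$ with $m>1$, both $\sigma_k$ and the admissible $t$ scale with $m$, and the optimisation produces the quadratic bound, with the constant $c_k = 2(p^b - p^{b-1} - 1 - 1/p) + \lfloor (m-1)(1-1/p)/2 \rfloor$ absorbing the integrality adjustment in the choice of $t$.

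The main obstacle I expect is the last case $k = mp^b + 1$ with $m > 1$: here the interaction between the Frobenius-type degeneration at $p^b$ and the multiplicity $m$ forces a careful choice of parameters in the efficient congruencing scheme, and the explicit form of $c_k$ must be extracted by book-keeping the integer parts at the end of the optimisation. By contrast, the major-arc analysis and the verification of $\mathfrak{S}_{s,k}(n) J_\infty(n) \gg 1$ for $n \in \mathbb{J}_q^k[t]$ should not introduce additional obstacles beyond those already handled in \cite{RMK}, since the only ingredient that originally required $k < p$ is the minor-arc treatment, which we have replaced by the Vinogradov-based argument.
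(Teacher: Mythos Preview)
Your plan is essentially correct for the case $k=p^b+1$: there the paper also proceeds via a pointwise Weyl bound $\sup_{\mathfrak{m}}|g|<q^{X(1-c)}$ (extracted from the Vinogradov-based result of \cite{LL}) combined with a mean value of length $2(2k+2)$, giving the threshold $4k+5$.

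For the remaining two cases, however, the interpolation $\int_{\mathfrak{m}}|f|^{s}\le (\sup_{\mathfrak{m}}|f|)^{s-2t}\int_{\mathbb{T}}|f|^{2t}$ does \emph{not} produce the stated bounds. The estimate $\int_{\mathbb{T}}|f(\alpha)|^{2t}\ll q^{(2t-k+\eta)X}$ that you invoke is not the Vinogradov mean value itself (that concerns the full system over $\mathbb{T}^{r}$); one must first relate the single-equation count to $J_t(\mathcal{R}',X)$, and this is available only once $t\ge r(k+1)$. Your scheme then yields at best $\widetilde G_q(k)\le 2r(k+1)+1$, which for $p\nmid(k-1)$ exceeds the claimed $2rk-5+\lfloor(6\lfloor k/p\rfloor-4)/(k-2)\rfloor$ by roughly $2r+6$, and similarly overshoots in the $m>1$ case.

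What the paper does instead in these two cases is avoid the pointwise Weyl bound entirely and obtain a power saving on the minor-arc mean value directly. A translation-in-$y$ argument converts $\int_{\mathfrak{m}}|g|^{2s}$ into an expression involving the Vinogradov mean value $J_s(\mathcal{R}',X)$ times an auxiliary sum
\[
\psi(\theta,\alpha)=q^{-X}\sum_{y\in I_X}\sum_{\ord h\le j_0(X-1)}e\bigl(-chy^{k-j_0}\theta-\alpha h\bigr),\qquad \theta\in\mathfrak{m},
\]
and the real work lies in bounding $\psi$: a pigeonhole argument gives $\psi\ll q^{(j_0-1)X}$ (so $\delta_0=1$) when $p\nmid(k-1)$, while for $k=mp^b+1$ with $m>1$ one needs the large sieve inequality over $\mathbb{F}_q[t]$ to obtain $\delta_0=1/(4p^b)$. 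This yields $\int_{\mathfrak{m}}|g|^{2s}\ll q^{(2s-k-\delta_0+\eta)X}$ for $s\ge r(k+1)$. Finally, a H\"older interpolation against Hua's lemma with $j=3$ steps of Weyl differencing (still admissible since $h_0(k)\ge 3$ in both cases) pushes the threshold from $2r(k+1)$ down to the values in the theorem. Neither the $\psi$-estimate nor the Hua interpolation appears in your outline, and without them the claimed constants $-5+\lfloor\cdot\rfloor$ and $c_k$ cannot be reached.
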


We note that when $p \nmid (k-1) $ the above theorem is proved using Lemma \ref{lemma case 1} in Section \ref{technical section},
which involves an application of the pigeon hole principle. However, when $k = m p^b + 1$
this approach is no longer effective. As a result, we have to use analogous results which rely on the large sieve inequality
instead when $m>1$, and another separate approach when $m=1$. This explains why we consider the three cases separately.

We also remark that when $k>p$ our estimates for $\widetilde{G}_q(k)$ given above
are sharper than the current available bound of $\widetilde{G}(k) \leq 2k^2 - 2k - 8 \ (k \geq 6)$ for the integer case \cite{W2}.
In particular, note that in the special case when $k = p^b + 1$ and $k>3$, we obtain a sharp linear bound
of $\widetilde{G}_q(k) \leq 4k+5$
in contrast to the quadratic bound for $\widetilde{G}(k)$.

We now state the result for the case $3 \leq k < p$.
\begin{thm}
\label{asymptotic stuff 2}
Suppose $3 \leq k < p$.
Then we have $\widetilde{G}_q(k) \leq 2k^2 - 2 \lfloor (\log k)/(\log 2) \rfloor$. 
Furthermore, $\widetilde{G}_q(7) \leq 86$ and $\widetilde{G}_q(k) \leq 2k^2 - 11$ when $k \geq 8$.
\end{thm}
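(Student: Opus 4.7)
The plan is to prove Theorem \ref{asymptotic stuff 2} by the Hardy--Littlewood circle method over $\mathbb{F}_q[t]$, adapting the approach used by Wooley in \cite{W1, W2} for the integer setting. Writing $f(\alpha) = \sum_{x \in I_X} e(\alpha x^k)$, one has $R_{s,k}(n) = \int f(\alpha)^s e(-n \alpha)\, d\alpha$ over the function-field analogue of the unit torus set up in Section \ref{Prelim}. Dissect this torus into major arcs $\mathfrak{M}$ and minor arcs $\mathfrak{m}$. Theorem \ref{thm major arcs} supplies the expected main term $\mathfrak{S}_{s,k}(n) J_\infty(n) q^{(s-k)P}$ on $\mathfrak{M}$ as soon as $s$ is modestly large, so the entire burden is to show the minor arc contribution is $o(q^{(s-k)P})$ whenever $s \geq 2k^2 - 2\lfloor (\log k)/(\log 2) \rfloor$.

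For the minor arcs I would use the classical two-step scheme: a pointwise Weyl bound combined with an integrated mean value. Since $3 \leq k < p$, Weyl differencing introduces no characteristic-$p$ degeneracy, and iterating the argument $k-1$ times produces a bound of the form
\begin{equation*}
\sup_{\alpha \in \mathfrak{m}} |f(\alpha)| \; \ll \; q^{X(1 - 2^{1-k} + \varepsilon)}.
\end{equation*}
For the mean value input I would invoke the function field Vinogradov mean value theorem of Liu and Wooley \cite{LW1}, which for a suitable integer $t$ close to $k(k-1)/2$ delivers
\begin{equation*}
\int |f(\alpha)|^{2t} d\alpha \; \ll \; q^{(2t - k)X + \varepsilon X}.
\end{equation*}
H\"older's inequality then bounds
\begin{equation*}
\int_{\mathfrak{m}} |f(\alpha)|^{s} d\alpha \; \leq \; \Big(\sup_{\mathfrak{m}} |f(\alpha)|\Big)^{s-2t} \int |f(\alpha)|^{2t} d\alpha,
\end{equation*}
and one reads off the smallest $s$ forcing a strict saving over $q^{(s-k)P}$.

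The round bound $s \geq 2k^2$ drops out of this scheme. The logarithmic refinement $2 \lfloor (\log k)/(\log 2)\rfloor$ would come from absorbing $\lfloor(\log k)/(\log 2)\rfloor$ additional Weyl differencing iterations into the H\"older splitting: since the saving halves at each differencing step, $j$ applications replace a crude minor-arc bound by one that trades $2$ variables for each dyadic halving, accounting precisely for the shift in $s$. The special sharpenings for $k = 7$ and $k \geq 8$ should come from a finite case analysis that optimizes the exponent $2t$ in H\"older against the exact Vinogradov exponent in each range of $k$, tightening the losing factor by a fixed additive constant; I would expect the $k = 7$ bound to require one separate hand-computation because small $k$ generally escape the clean asymptotic counting.

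I expect the main obstacle to lie not in the global structure, which is standard, but in confirming that the minor arc definition in Section \ref{Prelim} is wide enough to absorb the full Weyl saving without encroaching on $\mathfrak{M}$, and in tracking constants carefully through the Weyl--Hua--Vinogradov chain so as to obtain the precise numerical bound $\widetilde{G}_q(7) \leq 86$. One also has to verify at each differencing step that the auxiliary polynomial is non-degenerate modulo $p$; this is guaranteed by the hypothesis $k < p$, which is exactly why the argument outperforms the quadratic-in-$k$ bounds of Theorem \ref{asymptotic stuff} that must deal with the characteristic-$p$ constraint.
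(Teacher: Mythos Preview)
Your scheme has a genuine gap: the H\"older splitting $(\sup_{\mathfrak m}|f|)^{s-2t}\int|f|^{2t}$ does not deliver the claimed threshold. The Vinogradov mean value theorem bounds $J_t(\{1,\dots,k\};X)$, not the single moment $\int_{\mathbb T}|f(\alpha)|^{2t}\,d\alpha$; passing from one to the other via the trivial embedding $\int|f|^{2t}\le q^{(\kappa-k)X}J_t$ costs exactly the $\kappa-k$ one might hope to save, and yields only $\int|f|^{2t}\ll q^{(2t-k+\varepsilon)X}$, which is essentially the major-arc size. With Theorem~\ref{eff cong result} this holds only for $t\ge k^2+k$ (not $t\approx k(k-1)/2$), so your $L^\infty$--$L^1$ split requires $s>2k^2+2k$, worse than the target $2k^2-2\lfloor\log_2 k\rfloor$. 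The claim that ``the round bound $s\ge 2k^2$ drops out'' is therefore unjustified, and the vague account of the logarithmic refinement does not repair this.

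What the paper actually does is different in a crucial way. The key input is not a pointwise Weyl bound but Theorem~\ref{Thm minor arc bound 1}: a translation-and-averaging argument (the shifting trick in the proof, with Lemma~\ref{lemma case 1} supplying the pigeonhole estimate on $\psi(\theta,\alpha)$) produces a \emph{minor-arc} mean value $\int_{\mathfrak m}|g|^{2s}\ll q^{(2s-k-1+\varepsilon)X}$ for $s\ge k^2+k$, i.e.\ an extra saving $\delta_0=1$ beyond the naive Vinogradov-to-Waring passage. This minor-arc bound is then interpolated via H\"older, with genuine exponents $a'+b'=1$, against Hua's inequality $\int|g|^{2^j}\ll q^{(2^j-j+\varepsilon)X}$ (Proposition~\ref{Hua's lemma}), not against a sup-norm. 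Optimising in $j$ gives $s_1(k)=\min_j s_0'(j)$ as in~(\ref{def s_1}), which is exactly Wooley's combinatorics from~\cite{W1} and yields $2k^2-2\lfloor\log_2 k\rfloor$ together with the sharper values for $k=7$ and $k\ge 8$. The shifting trick is the missing idea in your proposal.
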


We also study the slim exceptional sets associated to the asymptotic formula ~(\ref{equation baaaaaaahhhhh}).
These sets measure the frequency with which the expected formula ~(\ref{equation baaaaaaahhhhh}) does not hold.
In other words, we estimate the number of polynomials that in a certain sense do not satisfy the asymptotic formula.
For $\psi(z)$ a function of positive variable $z$, we denote by $\widetilde{E}_{s,k}(N, \psi)$
the set of $n \in I_N \cap \mathbb{J}_q^k[t]$  
for which
\begin{equation}
\label{exceptional}
\Big{|}  R_{s,k}(n) - \mathfrak{S}_{s,k}(n) J_{\infty}(n) q^{(s-k)P} \Big{|} > q^{(s-k)P} \psi(q^P)^{-1}.
\end{equation}
Note that $\widetilde{E}_{s,k}(N, \psi)$ is dependent on $q$.
We define $\widetilde{G}_q^+(k)$ to be the least
positive integer $s$ for which
$|\widetilde{E}_{s,k}(N, \psi)| = o(q^{N})$ for some
function $\psi(z)$ increasing to infinity with $z$.
We obtain the following estimates on $\widetilde{G}_q^+(k)$.
We first present the case $k > p$.
\begin{thm}
\label{exceptional stuff}
Let $k \geq 3$ be an integer, where $p \nmid k$.
Suppose $k>p$, then we have
\begin{eqnarray}
\notag
\widetilde{G}_q^+(k) \leq
\left\{
    \begin{array}{lll}
         k \left( k - \Big{\lfloor} \frac{k}{p} \Big{\rfloor} \right) - 2 + \Big{\lfloor}  \frac{3 \lfloor k/p \rfloor - 2 }{k-2} \Big{ \rfloor},
         &\mbox{if } p \nmid (k-1), \\ \\
         2k+3,
         &\mbox{if } k = p^b + 1, \\ \\
         \left(1 - \frac{1}{p}\right)k^2 - (p^b - p^{b-1} - 2)k - c'_{k}, &\mbox{if } k = m p^b + 1 \mbox{ and }m>1, \\
    \end{array}
\right.
\end{eqnarray}
\newline
where $c'_{k} = \left(  p^b - p^{b-1} - 1 - \frac{1}{p}  \right) + \Big{\lfloor} \frac{(m-1)(1-1/p)}{4} \Big{\rfloor}$.
\end{thm}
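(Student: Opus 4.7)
The plan is a standard mean-square (Bessel) argument that reuses the circle method dissection developed for Theorem~\ref{asymptotic stuff}, but exploits a sum over $n$ to absorb half of the powers of the generating function. Writing
\begin{equation}
r_s(n) := R_{s,k}(n) - \mathfrak{S}_{s,k}(n) J_{\infty}(n) q^{(s-k)P},
\end{equation}
Chebyshev's inequality applied to the defining bound (\ref{exceptional}) gives
\begin{equation}
|\widetilde{E}_{s,k}(N, \psi)| \;\leq\; \psi(q^P)^2 \, q^{-2(s-k)P} \sum_{n \in I_N \cap \mathbb{J}_q^k[t]} |r_s(n)|^2,
\end{equation}
so it suffices to show the right-hand side is $o(q^{N})$ for some function $\psi$ increasing to infinity.

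I would then insert the Hardy--Littlewood dissection $\mathbb{T} = \mathfrak{M} \cup \mathfrak{m}$ used in the proof of Theorem~\ref{asymptotic stuff}, together with the generating function $f(\alpha) = \sum_{x \in I_X} e(\alpha x^k)$ and the orthogonality relation $R_{s,k}(n) = \int_{\mathbb{T}} f(\alpha)^s e(-n\alpha)\,d\alpha$. On $\mathfrak{M}$, Theorem~\ref{thm major arcs} furnishes the main term with a power-saving error $O(q^{(s-k-\delta)P})$ uniform in $n$, whose contribution to $\sum_{n \in I_N} |r_s(n)|^2$ is $O(q^{N+2(s-k-\delta)P})$, acceptable as long as $\psi(q^P)$ grows more slowly than $q^{\delta P}$. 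The heart of the proof is the minor arc contribution $I_{\mathfrak{m}}(n) := \int_{\mathfrak{m}} f(\alpha)^s e(-n\alpha)\,d\alpha$, for which Parseval on the function field torus yields
\begin{equation}
\sum_{n \in \mathbb{F}_q[t]} |I_{\mathfrak{m}}(n)|^2 \;\leq\; \int_{\mathfrak{m}} |f(\alpha)|^{2s}\,d\alpha.
\end{equation}
I would bound the right side by precisely the H\"older split used in the proof of Theorem~\ref{asymptotic stuff},
\begin{equation}
\int_{\mathfrak{m}} |f|^{2s}\,d\alpha \;\leq\; \Big( \sup_{\alpha \in \mathfrak{m}} |f(\alpha)| \Big)^{2s - 2u} \int_{\mathbb{T}} |f(\alpha)|^{2u}\,d\alpha,
\end{equation}
with the same auxiliary exponent $u$ that drives the three cases of Theorem~\ref{asymptotic stuff}, and substitute the Vinogradov-type minor arc supremum bound (via Lemma~\ref{lemma case 1} when $p\nmid(k-1)$, and via a large-sieve substitute when $k=mp^b+1$ with $m>1$) together with the relevant Vinogradov mean value estimate. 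The crucial observation is that in the mean-square formulation $s$ appears multiplied by $2$ in the exponents, so the threshold on $2s$ forcing $\sum_n |I_{\mathfrak{m}}(n)|^2 = o\bigl(q^{2(s-k)P + N}\bigr)$ is exactly the threshold on $s$ encountered in Theorem~\ref{asymptotic stuff}; halving then produces the stated bounds on $\widetilde{G}_q^+(k)$.

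The main obstacle I anticipate is the case-by-case accounting across the three regimes, especially verifying that the constants $c'_k$ emerge cleanly as roughly half of the $c_k$ from Theorem~\ref{asymptotic stuff}. The subcase $k = mp^b+1$ with $m>1$ looks most delicate, since the large-sieve substitute for the pigeonhole argument in Lemma~\ref{lemma case 1} introduces a loss that must be tracked through the Parseval--H\"older chain without doubling. The major arc step is immediate from Theorem~\ref{thm major arcs}, Parseval is formal on the function field torus, and the analytic content is inherited wholesale from the minor arc and mean-value machinery developed for Theorem~\ref{asymptotic stuff}. To finish, any subpolynomially growing choice of $\psi$, for instance $\psi(z) = (\log z)^A$, stays safely below both the major and minor arc power savings and closes the argument.
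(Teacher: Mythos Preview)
Your overall strategy is correct and matches the paper's: both reduce the exceptional-set count to a bound on $\int_{\mathfrak{m}} |g(\alpha)|^{2s}\,d\alpha$. Your Chebyshev--Parseval reduction is equivalent to the paper's route, which introduces the twisted kernel $K(\alpha)=\sum_{n\in\widetilde{E}}\eta(n)e(n\alpha)$, sums the pointwise lower bound over $n$, and applies Cauchy--Schwarz to obtain
\[
E^{1/2}\,q^{(s-k)P}\psi(q^P)^{-1}\ll\Bigl(\int_{\mathfrak{m}}|g|^{2s}\,d\alpha\Bigr)^{1/2}.
\]
Either way one lands on the same mean value, and the ``halving'' heuristic you describe is exactly how the thresholds $u_2(k)$ in Propositions~\ref{prop u_2} and~\ref{prop u_2 mpb} relate to $s_1(k)$.

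There is, however, a mis-description of the minor arc input. The H\"older split you wrote,
\[
\int_{\mathfrak{m}}|f|^{2s}\,d\alpha\;\leq\;\Bigl(\sup_{\mathfrak{m}}|f|\Bigr)^{2s-2u}\int_{\mathbb{T}}|f|^{2u}\,d\alpha,
\]
is \emph{not} what the paper uses except in the case $k=p^b+1$. For $p\nmid(k-1)$ and $k=mp^b+1$ with $m>1$, the paper never establishes a pointwise supremum bound for $g(\alpha)$ on $\mathfrak{m}$; instead, Propositions~\ref{prop u_2} and~\ref{prop u_2 mpb} apply H\"older between the two mean values
\[
\int_{\mathfrak{m}}|g|^{2r(k+1)}\,d\alpha\quad\text{and}\quad\oint|g|^{2^j}\,d\alpha,
\]
bounding the first by Corollary~\ref{minor arc bound 2} (which is where the $\psi(\theta,\alpha)$ machinery of Theorem~\ref{Thm minor arc bound 1} enters) and the second by Hua's lemma (Proposition~\ref{Hua's lemma}). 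In particular, Lemma~\ref{lemma case 1} does \emph{not} bound $\sup_{\mathfrak{m}}|g|$; it bounds the auxiliary sum $\psi(\theta,\alpha)$ that arises after the shift-and-average manoeuvre in the proof of Theorem~\ref{Thm minor arc bound 1}. If you replace your displayed split by the actual H\"older interpolation between Corollary~\ref{minor arc bound 2} and Hua, and track the exponents as in Propositions~\ref{prop u_2} and~\ref{prop u_2 mpb}, the stated constants (including $c_k'$) drop out.
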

We now state the result for the case $3 \leq k < p$.
\begin{thm}
\label{exceptional stuff 2}
Suppose $3 \leq k<p$. Then we have $\widetilde{G}_q^+(k) \leq  k^2 -   \lfloor (\log k)/(\log 2) \rfloor$.
Furthermore, we have $\widetilde{G}_q^+(7) \leq 43$, and $\widetilde{G}_q^+(k) \leq k^2 - 5$ when $k \geq 8$.
\end{thm}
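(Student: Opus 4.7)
\emph{Proof plan.} My plan is to transfer the minor arc analysis already assembled for Theorem~\ref{asymptotic stuff 2} into an $L^2$ framework via Parseval's identity, which is the standard device that roughly halves the number of variables needed when one only wants an almost-all rather than a pointwise asymptotic statement. After invoking the Hardy--Littlewood dissection from Theorem~\ref{thm major arcs}, I write
$$E(n) := R_{s,k}(n) - \mathfrak{S}_{s,k}(n) J_{\infty}(n) q^{(s-k)P} = \int_{\mathfrak{m}} f(\alpha)^s e(-n\alpha)\,d\alpha + E_{\mathfrak{M}}(n),$$
where $f(\alpha) = \sum_{x \in I_X} e(\alpha x^k)$, $\mathfrak{m}$ is the minor arc set, and $E_{\mathfrak{M}}(n)$ is a major arc error that the estimates of Theorem~\ref{thm major arcs} already control pointwise in $n$ with room to spare. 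Applying Parseval's identity to $f(\alpha)^s \mathbf{1}_{\mathfrak{m}}(\alpha)$ yields
$$\sum_{\deg n < kX} \left| \int_{\mathfrak{m}} f(\alpha)^s e(-n\alpha)\,d\alpha \right|^2 = \int_{\mathfrak{m}} |f(\alpha)|^{2s}\,d\alpha,$$
so upon restricting to $n \in I_N \cap \mathbb{J}_q^k[t]$ the task reduces to bounding this minor arc integral.

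The crucial observation is that the pointwise minor arc input used to prove Theorem~\ref{asymptotic stuff 2}, namely a Weyl-type bound $\sup_{\alpha \in \mathfrak{m}} |f(\alpha)| \ll q^{(1 - \sigma)X}$ (with $\sigma$ of the Weyl-inequality strength for $3 \leq k < p$), combined with an $L^{2u}$ mean value estimate of Hua or Vinogradov type, gives
$$\int_{\mathfrak{m}} |f(\alpha)|^{2s}\,d\alpha \ \leq \ \Bigl(\sup_{\mathfrak{m}} |f|\Bigr)^{2s-2u} \int |f|^{2u}\,d\alpha \ \ll \ q^{(2s - k - \delta)X}$$
for some $\delta > 0$ as soon as $s$ exceeds roughly \emph{half} of the Theorem~\ref{asymptotic stuff 2} threshold. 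Indeed, a direct comparison of the two arithmetic estimates shows that the pointwise asymptotic formula requires $s > 2u$ while the $L^{2s}$ mean-square bound requires only $s > u$, so taking the same $u$ that underlies Theorem~\ref{asymptotic stuff 2} produces the threshold $s \geq k^2 - \lfloor (\log k)/(\log 2) \rfloor$ stated in the theorem. The sharper estimates $\widetilde{G}_q^+(7) \leq 43$ and $\widetilde{G}_q^+(k) \leq k^2 - 5$ for $k \geq 8$ then follow by inserting the same refined inputs that yield $\widetilde{G}_q(7) \leq 86$ and $\widetilde{G}_q(k) \leq 2k^2 - 11$.

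Combining the Parseval bound with Chebyshev's inequality, the number of $n \in I_N \cap \mathbb{J}_q^k[t]$ for which $|E(n)| > q^{(s-k)P} \psi(q^P)^{-1}$ is at most
$$\psi(q^P)^2 \, q^{-2(s-k)P} \!\! \sum_{n \in I_N \cap \mathbb{J}_q^k[t]} |E(n)|^2 \ \ll \ \psi(q^P)^2 \, q^{N - \delta X},$$
which is $o(q^N)$ provided $\psi$ grows sufficiently slowly (e.g.\ $\psi(q^P) = q^{\delta X/4}$), and this is exactly the definition of $\widetilde{G}_q^+(k) \leq s$. The main obstacle I expect is organizational rather than conceptual: one has to verify that the major arc error $E_{\mathfrak{M}}(n)$ admits an $\ell^2$ estimate consistent with the above minor arc saving, and that the Weyl pointwise and mean value inputs of Theorem~\ref{asymptotic stuff 2} thread through the Parseval inequality cleanly, with the expected halving of the variable threshold and no loss in the $-\lfloor (\log k)/(\log 2) \rfloor$ refinement.
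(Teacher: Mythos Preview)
Your Parseval--Chebyshev framework is sound and is essentially equivalent to the paper's device: the paper introduces an auxiliary sum $K(\alpha)=\sum_{n\in\widetilde{E}}\eta(n)e(n\alpha)$ over the exceptional set, applies Cauchy--Schwarz to $\int_{\mathfrak{m}} g(\alpha)^s K(-\alpha)\,d\alpha$, and uses $\int|K|^2=E$. Both routes collapse to the same inequality
\[
E \ \ll \ \psi(q^P)^2\, q^{-2(s-k)P}\int_{\mathfrak{m}}|g(\alpha)|^{2s}\,d\alpha,
\]
and your pointwise handling of the major arc error via Theorem~\ref{thm major arcs} is exactly what the paper does.

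Where your description goes wrong is the minor arc input. Theorem~\ref{asymptotic stuff 2} is \emph{not} proved via a pointwise Weyl bound $\sup_{\mathfrak{m}}|f|\ll q^{(1-\sigma)X}$ plugged into $(\sup)^{2s-2u}\int|f|^{2u}$; taken literally with Weyl strength $\sigma=2^{1-k}$, that route yields thresholds exponential in $k$, not $k^2-\lfloor\log_2 k\rfloor$. The paper (following Wooley \cite{W1}) instead interpolates by H\"older between two \emph{mean values}: the Vinogradov-based minor arc moment of Corollary~\ref{minor arc bound 2},
\[
\int_{\mathfrak{m}}|g|^{2r(k+1)}\,d\alpha \ \ll \ q^{(2r(k+1)-k-\delta_0+\epsilon)X},
\]
and Hua's lemma (Proposition~\ref{Hua's lemma}), $\int|g|^{2^j}\,d\alpha\ll q^{(2^j-j+\epsilon)X}$. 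Optimizing the H\"older weights over $j$ with target exponent $2u_0(j)$ rather than $s_0(j)$ is precisely what produces the halving you anticipate, and for $k<p$ (so $r=k$) the resulting $u_2(k)$ coincides with Wooley's $u_1(k)$, giving $k^2-\lfloor\log_2 k\rfloor$, $43$ for $k=7$, and $k^2-5$ for $k\geq 8$. If you swap your $\sup\times$mean step for this H\"older-between-two-means argument (Proposition~\ref{prop u_2} in the paper), the rest of your outline goes through unchanged.
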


The organization of the rest of the paper is as follows.
In Section \ref{Prelim}, we introduce some notation and basic notions required
to carry out our discussion in the setting over $\mathbb{F}_q[t]$. In Section \ref{technical section},
we go through technical details to prove an upper bound for $\psi(\alpha, \theta)$,
which is defined in ~(\ref{def psi}). This estimate is one of the main ingredients to
obtain our minor arc estimates, for the cases $p\nmid (k-1)$ and
$k = mp^b + 1$ with $m>1$, in Section \ref{section minor arc}. We also obtain minor arc estimates for the case $k = p^b + 1$ in Section \ref{section minor arc}. We then prove a useful result related to Weyl differencing in Section \ref{section weyl}. The content of
Sections \ref{asymptotic section} and \ref{exceptional section} are similar;
we combine the material from previous sections to obtain a variant of
minor arc estimates achieved in Section \ref{section minor arc}, from which our results
follow. 

We denote $\mathbf{x} = (x_1, ..., x_{2s})$, where $x_i \in \mathbb{F}_q[t] \ (1 \leq i \leq 2s)$.
We write $N_1 \leq \ord \mathbf{x} \leq N_2$ to denote that $N_1 \leq \ord x_i \leq N_2$
for $1 \leq i \leq 2s$, and given $n_0 \in \mathbb{F}_q[t]$, we write $(\mathbf{x} - n_0)$ to denote the $2s$-tuple
$(x_1 - n_0, ..., x_{2s} - n_0)$. Confusion should not arise if the reader interprets
analogous statements in a similar manner.




\section{Preliminary}
\label{Prelim}

While the Hardy-Littlewood circle method for $\mathbb{F}_q[t]$ mirrors the classical version
familiar from applications over $\mathbb{Z}$, the substantial differences in detail between
these rings demand explanation. Our goal in the present section is to introduce notation and
basic notions that are subsequently needed to initiate discussion of key components of this version
of the circle method. The material here is taken from various sources including \cite{KLZ}, \cite{L}, \cite{LL}, \cite{LW},
and \cite{RMK}.
Associated with the polynomial ring $\mathbb{F}_q[t]$ defined over the field $\mathbb{F}_q$
is its field of fractions $\mathbb{K} = \mathbb{F}_q(t)$.
For $f/g \in \mathbb{K}$, we define an absolute value
$\langle \cdot \rangle : \mathbb{K} \rightarrow \mathbb{R}$ by
$\langle f / g \rangle = q^{\deg f - \deg g}$ (with the convention that $\deg 0 = - \infty$ and $\langle 0 \rangle = 0$).
The completion of $\mathbb{K}$ with respect to this absolute value is $\mathbb{K}_{\infty} = \mathbb{F}_q((1/t))$,
the field of formal Laurent series in $1/t$. In other words, every element $\alpha \in \mathbb{K}_{\infty} $
can be written as $\alpha = \sum_{i = -\infty}^n a_i t^i$ for some $n \in \mathbb{Z}$, coefficients $a_i = a_i (\alpha)$ in $\mathbb{F}_q \
(i \leq n)$ and $a_n \not = 0$.
For each such $\alpha \in \mathbb{K}_{\infty}$, we refer to $a_{-1}(\alpha)$ as the \textit{residue} of
$\alpha$, an element of $\mathbb{F}_q$ that we abbreviate to $\res \alpha.$
If $n < -1$, then we let $\res \alpha = 0.$
We also define the order of $\alpha$ to be $\ord \alpha = n$. Thus if $f$ is a polynomial
in $\mathbb{F}_q[t]$, then $\ord f = \deg f$. 
Note that the order on $\mathbb{K}_{\infty}$ satisfies the following property: if
$\alpha, \beta \in \mathbb{K}_{\infty}$ satisfies $\ord \alpha > \ord \beta$,
then
\begin{equation}
\label{prop of ord}
\ord (\alpha + \beta) = \ord \alpha.
\end{equation}

The field $\mathbb{K}_{\infty}$ is
a locally compact field under the topology induced by the absolute value $\langle \cdot \rangle$. 
Let $\mathbb{T} = \{ \alpha \in \mathbb{K}_{\infty} : \ord \alpha < 0 \}$.
Every element $\alpha \in \mathbb{K}_{\infty}$ can be written uniquely in the shape
$\alpha = [\alpha] + \| \alpha \|$, where the \textit{integral part} of $\alpha$ is $[\alpha] \in \mathbb{F}_q[t]$ and
the \textit{fractional part} of $\alpha$ is $\| \alpha \| \in \mathbb{T}$. 
Note that $[\cdot]$ and $\| \cdot \|$ are $\mathbb{F}_q$-linear functions on $\mathbb{K}_{\infty}$ \cite[pp.12]{RMK}. Since $\mathbb{T}$ is a compact additive subgroup of $\mathbb{K}_{\infty}$, it possesses a unique Haar measure $d \alpha.$
We normalise it, so that $\int_{\mathbb{T}} 1 \ d \alpha = 1$.
The Haar measure on $\mathbb{T}$ 
extends easily to a product measure on the $D$-fold Cartesian product
$\mathbb{T}^D$, 
for any positive integer $D$. For convenience, we will use the notation
\begin{equation}
\oint \ d \boldsymbol{\alpha} := \int_{\mathbb{T}} ... \int_{\mathbb{T}} \ d \alpha_1 ... \ d \alpha_D,
\notag
\end{equation}
where the positive integer $D$ should be clear from the context.

We are now equipped to define an analogue of the exponential function. Recall
ch$(\mathbb{F}_q) = p$. There is a non-trivial additive character
$e_q: \mathbb{F}_q \rightarrow \mathbb{C}^{\times}$
defined for each $a \in \mathbb{F}_q$ by taking $e_q(a) = \exp ( 2 \pi i \ \text{tr}(a)/p)$,
where $\text{tr}: \mathbb{F}_q \rightarrow \mathbb{F}_p$
denotes the familiar trace map.
 This character induces a map $e : \mathbb{K}_{\infty} \rightarrow \mathbb{C}^{\times}$
by defining, for each element $\alpha \in \mathbb{K}_{\infty}$, the value of $e(\alpha)$
to be $e_q(a_{-1}(\alpha))$. The orthogonality relation underlying the Fourier analysis of $\mathbb{F}_q[t]$
takes the following shape.
\begin{lem}
\label{lemma orthogonality relation}
Let $h$ be a polynomial in $\mathbb{F}_q[t]$. Then we have
\begin{eqnarray}
\label{orthogonality relation}
\int_{\mathbb{T} } e( h \alpha) \ d\alpha
=
\left\{
    \begin{array}{ll}
         0, &\mbox{if } h \in \mathbb{F}_q[t] \backslash \{ 0 \}, \\
         1, &\mbox{if } h = 0. \\
    \end{array}
\right.
\end{eqnarray}
\end{lem}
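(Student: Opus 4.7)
The plan is to separate the two cases. When $h = 0$ the integrand is $e(0) = e_q(0) = 1$ identically on $\mathbb{T}$, so the integral equals $\int_{\mathbb{T}} 1 \, d\alpha = 1$ by the normalization of the Haar measure. The substantive case is $h \in \mathbb{F}_q[t] \setminus \{0\}$, for which I would run the standard character-orthogonality argument: exhibit some $\beta \in \mathbb{T}$ with $e(h\beta) \neq 1$, translate $\alpha \mapsto \alpha + \beta$ (which preserves $\mathbb{T}$ because $\mathbb{T}$ is an additive subgroup of $\mathbb{K}_\infty$ under the coefficient-wise addition of Laurent series), and conclude that the integral equals a nontrivial scalar multiple of itself.

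Before carrying this out I would record the additivity $e(x+y) = e(x)\,e(y)$ for $x,y\in\mathbb{K}_\infty$: since $\alpha = \sum_{i \leq \ord \alpha} a_i(\alpha)\,t^i$ makes the residue map $\res\colon \mathbb{K}_\infty \to \mathbb{F}_q$ an $\mathbb{F}_q$-linear functional, and $e_q$ is a character of $(\mathbb{F}_q,+)$, we get $e(x+y) = e_q(\res(x) + \res(y)) = e(x) e(y)$. Writing $I$ for the integral in question, translation invariance of $d\alpha$ on the compact group $\mathbb{T}$ then gives
\[
I = \int_{\mathbb{T}} e(h(\alpha + \beta)) \, d\alpha = e(h\beta) \int_{\mathbb{T}} e(h\alpha) \, d\alpha = e(h\beta)\, I,
\]
so $(1 - e(h\beta))\,I = 0$, and hence $I = 0$ once the factor $e(h\beta)$ is guaranteed to differ from $1$.

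It remains to exhibit such a $\beta$, and this is where the argument actually uses $h \neq 0$. Setting $d = \deg h \geq 0$ and letting $h_d \in \mathbb{F}_q^\times$ denote the leading coefficient of $h$, I would try $\beta = c\, t^{-d-1}$ with $c \in \mathbb{F}_q$ to be chosen. Then $\ord \beta = -d-1 < 0$, so $\beta \in \mathbb{T}$, and expanding $h\beta = \sum_{i=0}^{d} h_i c\, t^{i-d-1}$ one reads off $\res(h\beta) = h_d c$, whence $e(h\beta) = e_q(h_d c)$. Because $e_q$ is a \emph{nontrivial} character of $(\mathbb{F}_q,+)$, there exists $a \in \mathbb{F}_q$ with $e_q(a) \neq 1$; taking $c = a\,h_d^{-1}$ produces $e(h\beta) = e_q(a) \neq 1$, completing the proof. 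The only real obstacle is being careful with definitions (residue, order, the subgroup structure of $\mathbb{T}$); the underlying principle is simply that a nontrivial character integrates to zero over any compact group on which it is nontrivial.
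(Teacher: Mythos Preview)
Your argument is correct: the case $h=0$ is immediate from the normalization, and for $h\neq 0$ your translation trick together with the explicit choice $\beta=c\,t^{-\deg h-1}$ produces a $\beta\in\mathbb{T}$ with $e(h\beta)\neq 1$, forcing the integral to vanish. The paper itself does not prove this lemma at all; it simply cites \cite[Lemma~1(f)]{RMK}, so your write-up in fact supplies what the paper outsources.
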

\begin{proof}
This is \cite[Lemma 1 (f)]{RMK}.
\end{proof}
The following estimate on exponential sums will be useful during the analysis in subsequent sections.
\begin{lem}
\label{lemma Kubota 7}
Let $Y \in \mathbb{N}$. Then we have
\begin{eqnarray}
\sum_{\ord x \leq Y} e(\beta x) =
\left\{
    \begin{array}{ll}
         q^{Y+1}, &\mbox{if } \ord ||{\beta}|| < -Y-1, \\
         0, &\mbox{if } \ord ||{\beta}|| \geq -Y-1. \\
    \end{array}
\right.
\end{eqnarray}
\end{lem}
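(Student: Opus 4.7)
The plan is to recognize the sum as an orthogonality relation for an additive character on the finite $\mathbb{F}_q$-vector space $V_Y = \{x \in \mathbb{F}_q[t] : \deg x \leq Y\}$, which has cardinality $q^{Y+1}$. Since the exponential $e(\cdot)$ factors through the residue, and the residue is insensitive to the integral part of its argument, the map $x \mapsto e(\beta x)$ depends only on $\|\beta\|$, so without loss of generality I may replace $\beta$ by $\|\beta\| \in \mathbb{T}$. Moreover the map $x \mapsto e(\beta x) = e_q(\res(\beta x))$ is an additive character of $V_Y$, since $\res$ and $e_q$ are additive and $\beta(x_1+x_2) = \beta x_1 + \beta x_2$. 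By the standard orthogonality relation on a finite abelian group, the sum equals $|V_Y| = q^{Y+1}$ when the character is trivial and $0$ otherwise.

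Next I would make explicit when the character is trivial. Writing $\beta = \sum_{i \leq -1} a_i t^i$ and $x = \sum_{j=0}^{Y} c_j t^j$, a direct coefficient computation gives
\begin{equation}
\res(\beta x) = \sum_{j=0}^{Y} a_{-1-j}\, c_j,
\notag
\end{equation}
so $x \mapsto e(\beta x)$ is the character $(c_0,\dots,c_Y) \mapsto e_q\bigl(\sum_{j=0}^Y a_{-1-j} c_j\bigr)$ on $\mathbb{F}_q^{Y+1}$. This character is trivial if and only if the $\mathbb{F}_q$-linear functional $\sum_{j=0}^Y a_{-1-j} c_j$ is identically zero (since $e_q$ is a nontrivial character of $\mathbb{F}_q$, any nonzero $\mathbb{F}_q$-linear map composed with $e_q$ gives a nontrivial character), which happens precisely when $a_{-1} = a_{-2} = \cdots = a_{-Y-1} = 0$, i.e.\ precisely when $\ord \|\beta\| < -Y-1$.

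Combining these two points yields the lemma: in the first case the character is trivial and the sum is $q^{Y+1}$, while in the second case it is nontrivial and the sum vanishes. There is no real obstacle here; the only care needed is to track that $e(\beta x)$ sees only $\|\beta\|$, to identify the dual pairing with the coefficients of $\beta$ via the residue, and to invoke nontriviality of $e_q$ so that a nonzero $\mathbb{F}_q$-linear functional produces a nontrivial character. All of this is routine and may be cited essentially from the setup in Section~\ref{Prelim} together with Lemma~\ref{lemma orthogonality relation} (or re-derived directly as above).
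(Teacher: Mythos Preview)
Your argument is correct. The paper does not give an independent proof of this lemma at all; it simply cites \cite[Lemma~7]{RMK}. Your write-up is the standard self-contained derivation: identify $\{x:\ord x\le Y\}$ with $\mathbb{F}_q^{Y+1}$, observe that $x\mapsto e(\beta x)$ is an additive character depending only on $\|\beta\|$, and compute via $\res(\beta x)=\sum_{j=0}^Y a_{-1-j}c_j$ exactly when that character is trivial. The only point worth making explicit (and you do make it) is that a nonzero $\mathbb{F}_q$-linear functional on $\mathbb{F}_q^{Y+1}$ is surjective onto $\mathbb{F}_q$, so composing with the nontrivial character $e_q$ yields a nontrivial character; this is what pins down the dichotomy $\ord\|\beta\|<-Y-1$ versus $\ge -Y-1$. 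In short, you have supplied the proof that the paper outsources to Kubota, and there is nothing to correct.
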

\begin{proof}
This is \cite[Lemma 7]{RMK}.
\end{proof}

For each $k \geq 2$,
we define the following exponential sum
\begin{equation}
g(\alpha) = \sum_{ x \in I_X } e(\alpha x^k).
\end{equation}
Then, it is a consequence of the orthogonality relation ~(\ref{orthogonality relation}) that
\begin{equation}
\label{first integral}
R_{s,k}(n) = \int_{\mathbb{T}} g(\alpha)^s e(- n \alpha) \ d \alpha.
\end{equation}
We analyse the integral ~(\ref{first integral}) via the Hardy-Littlewood
circle method, and to this end we define sets of \textit{major} and \textit{minor arcs}
corresponding to well and poorly approximable elements of $\mathbb{T}$. Let $R_k = (k-1)X$.
Given polynomials $a$ and $g$ with $(a,g)=1$ and $g$ monic, we define the
\textit{Farey arcs} $\mathfrak{M}_k(g,a)$ about $a/g$ (associated to $k$) by
\begin{equation}
\label{def major}
\mathfrak{M}_k(g,a) = \{ \alpha \in \mathbb{T} : \ord (\alpha - a/g) < -R_k - \ord g  \}.
\end{equation}
The set of major arcs $\mathfrak{M}_k$ is defined to be the union of the sets $\mathfrak{M}_k(g,a)$
with
\begin{equation}
\label{def minor}
a,g \in \mathbb{F}_q[t], \ \  g \text{ monic}, \ \ 0 \leq \ord a < \ord g \leq X, \ \text{ and } \  (a,g) = 1.
\end{equation}
The set of minor arcs is defined to be $\mathfrak{m}_k = \mathbb{T} \backslash \mathfrak{M}_k$. It follows from
Dirichlet's approximation theorem in the setting of $\mathbb{F}_q[t]$ \cite[Lemma 3]{RMK}
that $\mathfrak{m}_k$ is the union of the sets $\mathfrak{M}_k(g,a)$
with
\begin{equation}
a,g \in \mathbb{F}_q[t], \ \  g \text{ monic}, \ \ 0 \leq \ord a < \ord g, \ \  X < \ord g \leq R_k,  \ \text{ and } \  (a,g) = 1.
\end{equation}
Notice $\mathfrak{m}_2 = \varnothing$ and for this reason, we assume $k \geq 3$
for results involving minor arcs.
We will suppress the subscript $k$ whenever there is no ambiguity with the choice of $k$ being used.
We can then rewrite ~(\ref{first integral}) as
\begin{eqnarray}
\label{second integral}
R_{s,k}(n) = \int_{\mathfrak{M}} g(\alpha)^s e(-n \alpha) \ d \alpha + \int_{\mathfrak{m}} g(\alpha)^s  e(-n \alpha) \ d \alpha,
\end{eqnarray}
and study the contribution from the major arcs and the minor arcs separately.

We have the following estimate on the major arcs, which is slightly different from what
is established in \cite{RMK}. The difference comes from our choice of $P(n)$
following \cite{LW}, instead of the approach taken in \cite{RMK}, and this choice allows us to
have a cleaner statement of the result. Applying Theorem \ref{thm major arcs} below
for the estimate of the major arcs results in the statement of Theorem \ref{thm kubota 1}
in contrast to that of \cite[Theorem 30]{RMK}.
\begin{thm}
\label{thm major arcs}
Suppose $p \nmid k$ and $s \geq 2k + 1$. 
Then there exists $\epsilon > 0$  such that
given any $n \in \mathbb{J}_q^k[t]$, the  
following asymptotic formula holds
\begin{equation}
\label{major arc thm}
\int_{ \mathfrak{M} } g(\alpha)^s \ d \alpha = \mathfrak{S}_{s,k}(n) J_{\infty}(n) q^{(s-k)P} + O\left( q^{(s-k - \epsilon )P} \right),
\end{equation}
where
$$
1 \ll \mathfrak{S}_{s,k}(n) J_{\infty}(n) \ll 1.
$$
\end{thm}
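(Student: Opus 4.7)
The plan is to adapt the standard function field major-arc analysis of Kubota \cite{RMK} to the normalization $P=P_k(n)$ used here. First I would parameterize a point on the Farey arc $\mathfrak{M}(g,a)$ as $\alpha = a/g + \beta$ with $\ord \beta < -R_k-\ord g$, and on this arc factor the generating function by writing each $x \in I_X$ as $x = r + g y$ with $\ord r < \ord g$ and $\ord y < X - \ord g$. A short computation using $\ord(\beta(gy)^k \cdot \text{lower terms}) < 0$ on $\mathfrak{M}$ then gives the clean factorization
\begin{equation}
\notag
g(\alpha) = q^{-\ord g}\, S(g,a)\, v(\beta) + O(\text{negligible}),
\end{equation}
where $S(g,a) = \sum_{\ord r < \ord g} e(a r^k/g)$ is the complete Gauss-type sum and $v(\beta)$ is the function-field analogue of the archimedean oscillatory integral over $I_X$. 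Raising to the $s$-th power and summing over all admissible $(g,a)$ with $\ord g \leq X$ splits the major-arc integral into a product of the truncated singular series $\mathfrak{S}_{s,k}^{(X)}(n)$ and the truncated singular integral $J_\infty^{(X)}(n)$, which by a change of variable $\beta \mapsto \beta q^{-kP}$ equals $q^{(s-k)P} J_\infty(n)$ in the limit.

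Next I would complete the singular series and singular integral. For the singular series, the key input is the Weyl-type bound $|q^{-\ord g} S(g,a)| \ll q^{-\ord g/k + \epsilon \ord g}$, valid precisely because $p \nmid k$ (so $k x^{k-1}$ is a separable derivative and one can apply the standard multiplicativity-plus-Hensel argument in the $\mathbb{F}_q[t]$ setting, cf.\ \cite{RMK, LW}). Under the hypothesis $s \geq 2k+1$ one then has $s/k > 2$, and the tail
\begin{equation}
\notag
\sum_{\ord g > X} \varphi(g)\, q^{-s\ord g/k + s\epsilon \ord g}
\end{equation}
is $O(q^{-\delta X})$ for some $\delta > 0$. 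A similar estimate, using an analogous bound $|v(\beta)| \ll q^{P}\min\{1, q^{-(1/k)(\ord \beta + kP)}\}$, handles the tail of the singular integral. This gives the main term $\mathfrak{S}_{s,k}(n)J_\infty(n)q^{(s-k)P}$ plus the claimed error $O(q^{(s-k-\epsilon)P})$.

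For the two-sided bound on $\mathfrak{S}_{s,k}(n)J_\infty(n)$, I would factor $\mathfrak{S}_{s,k}(n) = \prod_{P \text{ monic irred.}} \sigma_P(n)$ via the usual Euler product argument; the Weyl bound above together with $s \geq 2k+1$ yields absolute convergence and the upper bound $\mathfrak{S}_{s,k}(n) \ll 1$. For the lower bound, the hypothesis $n \in \mathbb{J}_q^k[t]$ provides a solution of the congruence at each prime, and a Hensel-lifting argument (again requiring $p \nmid k$) shows $\sigma_P(n) \geq 1 + O(|P|^{-\delta})$ for all but finitely many $P$, with $\sigma_P(n) > 0$ at the remaining primes; hence $\mathfrak{S}_{s,k}(n) \gg 1$. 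The factor $J_\infty(n) \geq 1$ simply counts nonzero solutions over $\mathbb{F}_q$ of $y_1^k + \cdots + y_s^k = b(n)$, which exist for $s$ large in view of $n \in \mathbb{J}_q^k[t]$ and the definition of $b(n)$. I expect the main technical obstacle to be the accounting for exceptional polynomials in the definition of $P$: one must verify that with the $P$ above (which differs slightly from the normalization in \cite{RMK}) the local solubility furnished by $\mathbb{J}_q^k[t]$ still survives the switch from $\lceil \deg n/k \rceil$ to $\deg n/k + 1$ in the exceptional case, so that the singular series lower bound is uniform in $n$.
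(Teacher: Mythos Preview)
Your proposal is essentially correct and reconstructs the standard major-arc analysis that underlies the cited references. The paper's own proof is much terser: rather than carrying out the factorization and series-completion arguments you outline, it simply invokes \cite[Lemma~17]{RMK} (applied with $m=X$ and $m'=R_k+\ord g$) to evaluate the $\beta$-integral as $J_\infty(n)q^{(s-k)P}+O(1)$, cites \cite[Lemma~5.2]{LW} for $1\ll\mathfrak{S}_{s,k}(n)\ll 1$ under $s\geq 2k+1$ and $n\in\mathbb{J}_q^k[t]$, cites \cite[Lemma~5.3]{LW} for $1\leq J_\infty(n)\ll 1$, and then notes that feeding these into the argument of \cite[Theorem~30]{RMK} yields the result once one observes that an oversight in \cite[Lemma~23]{RMK} allows the condition $s\geq 3k+1$ there to be relaxed to $s\geq 2k+1$. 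What you have sketched is, in effect, the content of those cited lemmas; your route is self-contained while the paper's is citation-based, but the underlying mathematics is the same.

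One small remark: in the ultrametric setting the singular-integral piece is typically handled more directly than via an approximate factorization $g(\alpha)\approx q^{-\ord g}S(g,a)v(\beta)$ with an archimedean-style $v(\beta)$; Kubota's Lemma~17 computes the relevant $\beta$-integral exactly (cases (a) and (b) there give the main term with no error for $P$ large), and this is also why your anticipated obstacle about the exceptional-$P$ normalization dissolves cleanly, as the paper absorbs it via the reference to \cite[Lemma~5.3]{LW}.
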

Note that the implicit constants in the theorem may depend on $s$, $q$, and $k$, 
where the constant in ~(\ref{major arc thm}) may also depend on $\epsilon$, but
they are independent of $n$ and $P$.

\begin{proof}
Let $0 < \varepsilon < 1$. Similarly as explained in the proof of \cite[Lemma 5.3]{LW},
by applying Lemma 17 of \cite{RMK} with $m = X$ and $m' = R_k + \ord g$, where $\ord g \leq \varepsilon X$, we obtain
\begin{equation}
\label{major arc estimate 1}
\int_{ \ord \beta < - R_k - \ord g} g( \beta )^s e(-n \beta) \ d \beta = J_{\infty}(n) q^{(s-k)P} + O(1),
\end{equation}
where the implicit constant may depend on $s, k, q$, and $\varepsilon$. We note that when $P$ is sufficiently large in
terms of $k$ and $\varepsilon$, it is only the cases (a) and (b) of \cite[Lemma 17]{RMK} that are relevant, and
in fact we obtain ~(\ref{major arc estimate 1}) without the $O(1)$ term. The $O(1)$ term in ~(\ref{major arc estimate 1}) comes from the small values of $P$
where this does not apply. It is also explained in the proof of \cite[Lemma 5.3]{LW} that for $s \geq k+1$, we have $1 \leq J_{\infty}(n) \ll 1$.
By \cite[Lemma 5.2]{LW}, we know that if $n \in \mathbb{J}_q^k[t]$ and $s \geq 2k+1$, then $1 \ll \mathfrak{S}_{s,k}(n) \ll 1$.

The equation ~(\ref{major arc thm}) is a consequence of ~(\ref{major arc estimate 1}) and \cite[Lemma 5.2]{LW},
and it is essentially contained in the proof of \cite[Theorem 30]{RMK}, where we replace
the use of \cite[Theorem 18]{RMK} with ~(\ref{major arc estimate 1}).
We remark that the condition $s \geq 3k+1$ is imposed in \cite[Lemma 23]{RMK}, which is also used in the proof of \cite[Theorem 30]{RMK}.
However, as stated in
\cite[pp.19]{LW} this is a result of an oversight and in fact we can relax the condition to $s \geq 2k+1$ in \cite[Lemma 23]{RMK}.
It can easily be verified that the arguments to prove ~(\ref{major arc thm}) within \cite[Theorem 30]{RMK}
also remains valid when $s \geq 2k+1$.
\end{proof}
\
When $k=2$, we know that $\mathfrak{m}_2 = \varnothing$. Hence, it follows that
\begin{equation}
\label{major k=2}
R_{s,k}(n) = \int_{ \mathfrak{M} } g(\alpha)^s \ d \alpha.
\end{equation}
Therefore, as an immediate consequence of Theorem \ref{thm major arcs}
we obtain $\widetilde{G}_q(2) \leq 5$.

It was proved in \cite[Lemma 28]{RMK} that
$\mathbb{F}_q[t] = \mathbb{J}_q^k[t]$ when $k < p$, which explains
the use of $\mathbb{F}_q[t]$ in the statement of Theorem \ref{thm kubota 1}
instead of $\mathbb{J}_q^k[t]$ as above in Theorem \ref{thm major arcs}.

Let $\mathcal{R}$ be a finite subset of $\mathbb{N}$ satisfying the following condition in \cite[pp.846]{KLZ} with $d=1$:
\begin{equation}
\label{cond* statement}
\text{Condition*: Given $l \in \mathbb{N}$, if there exists $j \in \mathcal{R}$ such that
$p \nmid {j \choose l}$, then $l \in \mathcal{R}$.}
\end{equation}
Let $J_s(\mathcal{R};X)$ denote the number of solutions of the system
\begin{equation}
\label{diag equation 1}
u_1^j + ... + u_s^j = v_1^j + ... + v_s^j  \ (j \in \mathcal{R} ),
\end{equation}
with $u_i, v_i \in I_X \ (1 \leq i \leq s)$. Since $p$ is the characteristic of $\mathbb{F}_q$,
if there exists $j, j' \in \mathcal{R}$ with $j' = p^v j$ for some $v \in \mathbb{N}$, then we
have
$$
\sum_{i=1}^s (u_i^{j'} - v_i^{j'}) = \left( \sum_{i=1}^s (u_i^{j} - v_i^{j}) \right)^{p^v}.
$$
Thus, the equations in ~(\ref{diag equation 1}) are not always independent. The absence of independence
suggests that Vinogradov-type estimates for integers cannot be adapted directly into a function field
setting. To regain independence, we instead consider
\begin{equation}
\label{def R' first}
\mathcal{R}' = \{ j \in \mathbb{N}: p \nmid j \text{ and } p^v j \in \mathcal{R} \text{ for some } v \in \mathbb{N} \cup \{0\} \}.
\end{equation}
Then we see that $J_s(\mathcal{R};X)$ also counts the number of solutions of the system
\begin{equation}
\label{diag equation 2}
u_1^j + ... + u_s^j = v_1^j + ... + v_s^j \ (j \in \mathcal{R'} ),
\end{equation}
with $u_i, v_i \in I_X \ (1 \leq i \leq s)$, or in other words $J_s(\mathcal{R};X) = J_s(\mathcal{R'};X)$.
We note here that although the equations in ~(\ref{diag equation 2}) are independent,
the set $\mathcal{R}'$ is not necessarily contained in $\mathcal{R}$.
The following theorem was proved in
\cite{LW1} and in \cite[Theorem 1.1]{KLZ} with $d=1$.
\begin{thm}[Theorem 1.1, \cite{KLZ}]
\label{eff cong result}
Suppose $\mathcal{R}$ satisfies Condition* given in ~(\ref{cond* statement}). Let $r = \card  \mathcal{R}'$, $\phi = \max_{j \in \mathcal{R}' } j$, and $\kappa = \sum_{j \in \mathcal{R}' } j$. Suppose $\phi \geq 2$ and $s \geq r \phi + r$.
Then for each $\epsilon > 0$, there exists a positive constant $C = C(s; r, \phi, \kappa; q; \epsilon)$
such that
$$
J_s(\mathcal{R};X) \leq C \left( q^X \right)^{2s - \kappa + \epsilon}.
$$
\end{thm}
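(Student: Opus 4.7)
The plan is to reduce to the independent system indexed by $\mathcal{R}' = \{j_1 < \cdots < j_r\}$ via the identity $J_s(\mathcal{R};X) = J_s(\mathcal{R}';X)$, so that each $j_i$ is coprime to $p$. Writing $\boldsymbol{\alpha} = (\alpha_j)_{j \in \mathcal{R}'} \in \mathbb{T}^r$ and setting
$$\mathfrak{f}(\boldsymbol{\alpha};X) = \sum_{x \in I_X} e\Bigl( \sum_{j \in \mathcal{R}'} \alpha_j x^j \Bigr),$$
Lemma \ref{lemma orthogonality relation} yields
$$J_s(\mathcal{R};X) = \oint |\mathfrak{f}(\boldsymbol{\alpha};X)|^{2s}\, d\boldsymbol{\alpha}.$$
This replaces a potentially $p$-dependent system by an independent one on $r$ distinct exponents, which is precisely why Condition* is imposed.

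Next I would run Wooley's efficient congruencing argument in the $\mathbb{F}_q[t]$-setting, following the template of \cite{W} as adapted to function fields in \cite{LW1}. I would introduce auxiliary mean values $K_{a,b}(\varpi; X, Y)$ counting solutions of (\ref{diag equation 2}) in which $a$ of the variables lie in a single residue class modulo an irreducible polynomial $\varpi$ of carefully chosen degree, while the remaining $b$ variables lie in distinct residue classes modulo $\varpi$. A H\"older--pigeonhole conditioning step applied to the $q^{\deg \varpi}$ residue classes yields a starting bound of the shape $J_s(\mathcal{R};X) \ll q^{c \deg \varpi} K_{1,s-1}(\varpi;X,X)$.

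The efficient congruencing step then passes from $K_{a,b}$ at level $\varpi^m$ to $K_{a',b'}$ at level $\varpi^{\phi m}$ by using the top equation (corresponding to $j = \phi$) in (\ref{diag equation 2}) to lift the conditioned variables to a higher $\varpi$-adic precision; this is where $\phi \geq 2$ enters, since $\phi = 1$ would yield no lift. Iterating $O(\log X)$ times and tracking the critical exponent $\lambda_s = \limsup_{X\to\infty} (\log J_s(\mathcal{R};X))/(X\log q)$, one obtains a recursion that forces $\lambda_s \leq 2s - \kappa$. The hypothesis $s \geq r\phi + r$ is exactly the threshold at which the H\"older losses incurred at each stage do not exceed the gain from the congruencing lift, so that the recursion closes and delivers the claimed $\epsilon$-free exponent.

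The hard part, and the place where Condition* is indispensable, is the translation invariance of the system under $x \mapsto x+c$. Over $\mathbb{Z}$ this is immediate from the binomial theorem, but in characteristic $p$ the expansion $(x+c)^j = \sum_{l} \binom{j}{l} c^{j-l} x^l$ gives nontrivial contributions only from those $l$ with $\binom{j}{l} \not\equiv 0 \pmod{p}$, and each such $l$ must itself lie in $\mathcal{R}$ in order for the translated configuration to remain a solution of the original system. This is exactly what Condition* guarantees, and the subsequent passage to $\mathcal{R}'$ cleans up the residual $p$-th power dependencies. Once this invariance has been verified, the remaining combinatorial and analytic steps parallel the integer argument in \cite{W}, with the archimedean absolute value replaced by $\langle\cdot\rangle$ and with integrals over $[0,1]^r$ replaced by $\oint d\boldsymbol{\alpha}$ over $\mathbb{T}^r$.
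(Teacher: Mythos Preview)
This theorem is not proved in the paper at all; it is quoted verbatim as Theorem~1.1 of \cite{KLZ} (also \cite{LW1}) and used as a black box. There is therefore no ``paper's own proof'' to compare your proposal against.

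That said, your sketch is a faithful high-level description of the efficient congruencing machinery that \cite{KLZ} and \cite{LW1} actually deploy: the reduction $J_s(\mathcal{R};X)=J_s(\mathcal{R}';X)$, the integral representation via orthogonality, the auxiliary mean values conditioned on residue classes modulo a prime $\varpi$, the lift from level $\varpi^m$ to $\varpi^{\phi m}$ via the top equation, and the crucial role of Condition* in securing translation invariance of the system in characteristic $p$. As a roadmap to the cited proof this is accurate. If you want to turn it into a self-contained argument you would still need to supply the quantitative bookkeeping (the precise definition of the $K_{a,b}$ and the inequalities relating them, the nonsingularity argument for the Hensel lift, and the iteration that kills the $\epsilon$), all of which is nontrivial and occupies the bulk of \cite{KLZ}.
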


The following is a useful criterion, which we utilize.
\begin{lem}
\label{Lucas}
Let $p$ be any prime
and $k = a_h p^h + ... + a_1 p + a_0$ with $0 \leq a_i < p$ $(0 \leq i \leq h)$
and $a_h \not = 0$.
The binomial coefficient ${k \choose n}$ is coprime to $p$ if and only if
$n = b_h p^h + ... + b_1 p + b_0$, where $0 \leq b_i \leq a_i$ $(0\leq i \leq h).$
\end{lem}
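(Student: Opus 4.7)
The plan is to prove this via the standard Freshman's dream argument in $\mathbb{F}_p[x]$. The key observation is that in characteristic $p$, the Frobenius endomorphism gives $(1+x)^p = 1 + x^p$ in $\mathbb{F}_p[x]$, and by iteration $(1+x)^{p^i} = 1 + x^{p^i}$ for all $i \geq 0$. This reduces the problem of understanding $\binom{k}{n} \bmod p$ to a combinatorial statement about base-$p$ digits.

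First I would write $k = a_h p^h + \cdots + a_1 p + a_0$ and compute in $\mathbb{F}_p[x]$:
\begin{equation}
\notag
(1+x)^k \;=\; \prod_{i=0}^{h} (1+x)^{a_i p^i} \;=\; \prod_{i=0}^{h} \bigl(1 + x^{p^i}\bigr)^{a_i} \;=\; \prod_{i=0}^{h} \sum_{b_i = 0}^{a_i} \binom{a_i}{b_i} x^{b_i p^i}.
\end{equation}
Since $0 \leq a_i < p$ and $0 \leq b_i \leq a_i < p$, each exponent $n = b_h p^h + \cdots + b_1 p + b_0$ arising when we expand the product is already in its (unique) base-$p$ representation. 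Therefore no two choices of tuples $(b_0, \ldots, b_h)$ in the product can contribute to the same monomial $x^n$, and comparing coefficients of $x^n$ on both sides yields
\begin{equation}
\notag
\binom{k}{n} \;\equiv\; \prod_{i=0}^{h} \binom{a_i}{b_i} \pmod{p}
\end{equation}
whenever $n$ admits a base-$p$ expansion with digits $b_i \leq a_i$, and $\binom{k}{n} \equiv 0 \pmod{p}$ otherwise (since then $x^n$ does not appear in the product at all).

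To conclude, I would observe that each factor $\binom{a_i}{b_i}$ with $0 \leq b_i \leq a_i < p$ is a nonzero integer strictly less than $p$, hence coprime to $p$. Thus the product on the right is coprime to $p$ precisely when every $b_i$ satisfies $0 \leq b_i \leq a_i$, which is the stated criterion. There is no real obstacle here: the only subtlety is verifying that distinct tuples $(b_0,\ldots,b_h)$ in the expansion yield distinct exponents, and this is immediate from the uniqueness of base-$p$ expansions given the constraint $b_i < p$.
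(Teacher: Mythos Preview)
Your proof is correct and is the standard generating-function proof of Lucas' theorem via the Frobenius identity $(1+x)^{p^i}=1+x^{p^i}$ in $\mathbb{F}_p[x]$. The paper itself does not supply an argument: it simply cites Lucas' Criterion from \cite[pp.~33]{LW} (alternatively \cite[Lemma~A.1]{Z} with $d=1$) and treats the result as known. So your approach is not so much different from the paper's as it is strictly more informative---you have written out the content behind the citation, whereas the paper defers entirely to the literature.
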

\begin{proof}
It follows by Lucas' Criterion \cite[pp.33]{LW} or apply \cite[Lemma A.1]{Z} with $d=1$.
\end{proof}
As a consequence of Lemma \ref{Lucas}, we have the following lemma.
\begin{lem}
\label{lemma j_0}
Let $p$ be any prime. Suppose $k = m p^b + 1$ with $m, b \in \mathbb{N}$ and $p \nmid m$.
Then, $(k - p^b)$ is the largest number less than $(k-1)$ such that
${k \choose k - p^b} \not \equiv 0 \ (\text{mod } p)$.
\end{lem}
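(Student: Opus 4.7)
The plan is to apply Lemma \ref{Lucas} (Lucas' Criterion) directly to the base-$p$ expansion of $k$. Since $p \nmid m$, I would first write $m$ in base $p$ as $m = c_r p^r + \ldots + c_1 p + c_0$ with $c_0 \neq 0$. Then $k = mp^b + 1$ has base-$p$ expansion whose digit at position $0$ is $1$, whose digits at positions $1, \ldots, b-1$ are all $0$, and whose digit at position $b+i$ equals $c_i$ for $0 \leq i \leq r$.

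Next I would exploit the symmetry ${k \choose l} = {k \choose k-l}$ to recast the problem: finding the largest $l < k-1$ with ${k \choose l} \not\equiv 0 \pmod p$ is equivalent to finding the smallest $n' \geq 2$ with ${k \choose n'} \not\equiv 0 \pmod p$, via the correspondence $n' = k - l$. The claim is then that this minimal $n'$ equals $p^b$.

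For $n' = p^b$, only the digit in position $b$ is nonzero (and equals $1$), while the corresponding digit of $k$ is $c_0 \geq 1$, so by Lemma \ref{Lucas} we have ${k \choose p^b} \not\equiv 0 \pmod p$. Conversely, for any $n'$ satisfying $2 \leq n' < p^b$, the nonzero base-$p$ digits of $n'$ are confined to positions $0, 1, \ldots, b-1$; since the digits of $k$ in those positions are $1, 0, \ldots, 0$, the Lucas condition forces every digit of $n'$ at positions $1, \ldots, b-1$ to vanish and the digit at position $0$ to be at most $1$, which gives $n' \in \{0,1\}$, contradicting $n' \geq 2$. Translating back via $l = k - n'$ yields that $k - p^b$ is the largest integer strictly less than $k - 1$ with ${k \choose k - p^b} \not\equiv 0 \pmod p$. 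The argument is essentially bookkeeping with digits, so there is no serious obstacle once Lucas' Criterion and the expansion of $k$ are in hand.
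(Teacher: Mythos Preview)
Your proof is correct and follows essentially the same approach as the paper: both write out the base-$p$ expansion of $k$ from that of $m$ and apply Lemma \ref{Lucas} to determine which values give a nonzero binomial coefficient modulo $p$. The only cosmetic difference is that the paper works directly with the digits of $k-j$ for $1 \leq j \leq p^b$, whereas you invoke the symmetry ${k \choose l} = {k \choose k-l}$ and analyze the digits of $n' = k-l$ instead; this saves the small step of computing how subtraction affects the low-order digits of $k$, but the substance is the same.
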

\begin{proof}
Let $m = c_a p^a + c_{a-1} p^{a-1} + ... + c_1 p + c_0$ with $0 \leq c_i <p$ and $ 0 < c_0$.
Thus, we have $k = c_a p^{a+b} + c_{a-1} p^{a-1+b} + ... + c_1 p^{b+1} + c_0 p^b + 1$.
For $1 \leq j \leq p^b$, write $k-j =
c_a p^{a+b} + c_{a-1} p^{a-1+b} + ... + c_1 p^{b+1} + d_b p^b
+ d_{b-1} p^{b-1} + ... + d_1 p + d_0$
with $0 \leq d_i < p$. Then, by Lemma \ref{Lucas}, ${k \choose k - j} \not \equiv 0 \ (\text{mod } p)$
if and only if $d_b \leq c_0$, $d_i = 0$ $(1 \leq i < b)$ and  $d_0 \leq 1$.
Therefore, it is not too difficult to verify that
${k \choose k-j} \not \equiv 0 \ (\text{mod } p)$ only when $j = 1$ and $p^b$
in the range $1 \leq j \leq p^b$.
\end{proof}

For a prime $p =$ ch$(\mathbb{F}_q)$ and  $k \in \mathbb{N}$ with $p \nmid k$, we define
$j_0(k,q) = j_0$ to be
\begin{equation}
\label{def j_0}
j_0 := \max_{0 < j < k} \left\{ j : p \nmid j \text{ and }{k \choose j} \not \equiv 0 \ (\text{mod } p) \right\}.
\end{equation}
If $p \nmid (k-1)$, then $j_0 = k-1$.
On the other hand, if $k = m p^b + 1$ for some $m, b \in \mathbb{N}$ and $p \nmid m$, then
$j_0 = k - p^b$ by Lemma \ref{lemma j_0}. We record the values of $j_0$ here for reference,
\begin{eqnarray}
\label{value of j_0}
j_0 =
\left\{
    \begin{array}{ll}
         k - 1, &\mbox{if } p \nmid (k-1), \\
         (m - 1) p^b + 1, &\mbox{if } k = m p^b + 1. \\
    \end{array}
\right.
\end{eqnarray}
With application of Theorem \ref{eff cong result} 
in mind, we define the following two
sets
\begin{equation}
\label{def R}
\mathcal{R} = \{1, 2, ... , j_0 , k \} \cup \{ k-1 \}
\end{equation}
and
\begin{eqnarray}
\label{def R'}
\mathcal{R}' &=& \{ j \in \mathbb{N}: p \nmid j \text{ and } p^v j \in \mathcal{R} \text{ for some } v \in \mathbb{N} \cup \{0\} \}
\\
&=& \{ j : j \in \mathcal{R} \text{ and } p \nmid j \}.
\notag
\end{eqnarray}
The first equality is the definition of $\mathcal{R}'$, which comes from ~(\ref{def R' first}),
but the second equality requires a slight justification.
If $p \nmid (k-1)$, then $\mathcal{R} = \{1, 2, ... , k \}$
and the second equality of ~(\ref{def R'}) is immediate.
If $k = m p^b+1$, then $k \in \mathcal{R}'$. We also have $k-1 = m p^b \not \in \mathcal{R}'$
and $m \in \mathcal{R}'$. However, since $j_0 = (m-1)p^b + 1 > m$ and $p \nmid m$,
it follows that $\mathcal{R}' = \{ j : 1 \leq j \leq j_0  \text{ and } p \nmid j \} \cup \{ k \}$
from which we obtain the second equality of ~(\ref{def R'}).

We let $\card \mathcal{R}' = r$
and let $\mathcal{R}' = \{t_1, ..., t_r\}$, where $t_1 < ...< t_r$.
Clearly, we have $t_r = k$ and it follows by our definition of $j_0$ and $\mathcal{R}$ that $t_{r-1} = j_0$.
We can verify by simple calculation that
\begin{eqnarray}
\label{defn of r}
r =
\left\{
    \begin{array}{ll}
         k - \lfloor k/p \rfloor, &\mbox{if } p \nmid (k-1), \\
         (1 - 1/p)(k-p^b) + (1 + 1/p), &\mbox{if } k = m p^b + 1. \\
    \end{array}
\right.
\end{eqnarray}
In particular, if $k = p^b + 1$, then $r=2$.
For the remainder of the paper, whenever we refer to $\mathcal{R}$, $\mathcal{R}'$
and $r$, we mean ~(\ref{def R}), ~(\ref{def R'}), and ~(\ref{defn of r}), respectively.
\begin{lem}
\label{condition*}
$\mathcal{R}$ satisfies Condition* given in ~(\ref{cond* statement}).
\end{lem}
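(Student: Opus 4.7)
The plan is to verify Condition$*$ by splitting on the two cases that define $\mathcal{R}$, namely $p \nmid (k-1)$ versus $k = mp^b+1$, since the structure of $\mathcal{R}$ and the relevant binomial coefficients differ sharply between them. The input will be Lemma \ref{Lucas} (Lucas' criterion), which lets us translate the condition $p \nmid \binom{j}{l}$ into a statement about the base-$p$ digits of $l$ being pointwise bounded by those of $j$.

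In the first case, $j_0 = k-1$ so $\mathcal{R} = \{1,2,\ldots,k\}$, and there is essentially nothing to prove: whenever $l \in \mathbb{N}$ satisfies $\binom{j}{l} \not\equiv 0 \pmod p$ for some $j \in \mathcal{R}$, we have $1 \le l \le j \le k$, hence $l \in \mathcal{R}$.

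In the second case, $\mathcal{R} = \{1,2,\ldots,j_0\} \cup \{k-1, k\}$ with $j_0 = (m-1)p^b + 1$, so I would handle the three types of values of $j$ in turn. If $j \le j_0$, the trivial bound $l \le j \le j_0$ immediately places $l \in \mathcal{R}$. The substantive cases are $j = k-1 = mp^b$ and $j = k = mp^b+1$, and here I would write $m = c_a p^a + \cdots + c_1 p + c_0$ with $c_0 \ge 1$ as in the proof of Lemma \ref{lemma j_0}, so that the base-$p$ expansion of $k-1$ occupies exactly positions $b, b+1, \ldots, a+b$ with digits $c_0, c_1, \ldots, c_a$, and $k$ additionally carries a digit $1$ at position $0$.

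The key observation, drawn directly from Lucas' criterion, is then the following: any $l$ with $p \nmid \binom{mp^b}{l}$ must have the form $l = l' p^b$ with the digits of $l'$ bounded by those of $m$, and consequently the largest such $l$ strictly less than $mp^b$ is attained by decrementing the lowest nonzero digit, giving $l \le (m-1)p^b < j_0$. For $j = k$, an $l$ with $p \nmid \binom{k}{l}$ likewise decomposes as $l = l'p^b + \varepsilon$ with $\varepsilon \in \{0,1\}$ and $l'$ digit-bounded by $m$, so either $l' = m$ (giving $l \in \{k-1, k\}$) or $l' \le m-1$ (giving $l \le (m-1)p^b + 1 = j_0$); the leftover possibility $l'=0$, $\varepsilon=1$ yields $l=1 \in \mathcal{R}$. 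In every case $l \in \mathcal{R}$, which is what we needed. The only mildly delicate step is this digit-counting argument identifying $(m-1)p^b$ as the second-largest value produced by Lucas' criterion, and matching it against $j_0 = (m-1)p^b + 1$; once this alignment is noted, the verification is routine.
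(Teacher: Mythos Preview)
Your proof is correct and follows essentially the same approach as the paper: both split into the cases $p \nmid (k-1)$ and $k = mp^b+1$, and in the latter case both reduce the verification to a Lucas-criterion digit analysis of $\binom{k-1}{l}$ and $\binom{k}{l}$. The only cosmetic difference is organizational: the paper fixes $l$ in the gap $j_0 < l < k-1$ and checks that every $j \in \mathcal{R}$ gives $p \mid \binom{j}{l}$, whereas you fix $j \in \{k-1,k\}$ and enumerate the admissible $l$, but the underlying computation is the same.
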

\begin{proof}
If $p \nmid (k-1)$, then $\mathcal{R} = \{ 1, 2, ..., k \}$
and it satisfies Condition*. This is easy to see, because
suppose for some $l \in \mathbb{N}$, there exists
$j \in \mathcal{R}$ such that $p \nmid {j \choose l}$. Then we have
$1 \leq l \leq j \leq k$, and hence $l \in \mathcal{R}$.
On the other hand, if $k = m p^b + 1$, then we have
$\mathcal{R} = \{ 1, 2, ..., j_0, k-1, k \}$.
Suppose we are given some $l \in \mathbb{N}$. It is clear that
if $l > k$, then there does not exist $j \in \mathcal{R}$ such that
$p \nmid {j \choose l}$, because $ {j \choose l} = 0$. Thus, it suffices to show that
for $j_0 < l < (k-1)$, ${j \choose l}\equiv 0 \ (\text{mod } p)$ for all $j \in \mathcal{R}$.
Clearly, ${j \choose l}\equiv 0 \ (\text{mod } p)$ for $j \leq j_0$.
Lemma \ref{lemma j_0} gives us that
${k \choose l} \equiv 0 \ (\text{mod } p)$ for $j_0 < l < (k-1)$.
Therefore, we only need to verify ${k-1 \choose l} \equiv 0 \ (\text{mod } p)$
for $j_0 = (k - p^b) < l < (k-1)$. 
Every $l$ in this range can be written as
$l = (m-1) p^b + c_{b-1} p^{b-1} + ... + c_1 p + c_0$,
where $0 \leq c_i < p$. Since $(k-1) = m p^b$, by Lemma \ref{Lucas} we have
${ k-1 \choose l}   
\not \equiv 0 \ (\text{mod } p)$
if and only if $c_i = 0$ for $0 \leq i < b$, or in other words $l = (m-1) p^b = k-p^b-1$.
Because $l = k-p^b-1$ is not in the range of $l$ we are considering, 
it follows that $\mathcal{R}$ satisfies Condition*.
\end{proof}

\section{Technical Lemmas}
\label{technical section}

We will be applying the following large sieve inequality in this section. Given a set
$\Gamma \subseteq \mathbb{K}_{\infty}$, if for any distinct elements $\gamma_1, \gamma_2 \in \Gamma$
we have $\ord (\gamma_1 - \gamma_2) > \delta$, then we say the points $\{ \gamma: \gamma \in \Gamma \}$
are \textit{spaced at least} $q^{\delta}$ \textit{apart in} $\mathbb{T}$.

\begin{thm}[Theorem 2.4, \cite{H}]
\label{large sieve}
Given $A,Z \in \mathbb{Z}^+$, let $\Gamma \subseteq \mathbb{K}_{\infty}$ be a set whose elements are
spaced at least $q^{-A}$ apart in $\mathbb{T}$. Let $(c_x)_{x \in \mathbb{F}_q[t]}$ be a sequence of
complex numbers. For $\alpha \in \mathbb{K}_{\infty}$, define
$$
\mathcal{S}(\alpha) = \sum_{\ord x \leq Z} c_x e(x \alpha).
$$
Then we have
$$
\sum_{\gamma \in \Gamma} |\mathcal{S}(\gamma)|^2 < \max \{ q^{Z+1}, q^{A-1} \} \sum_{\ord x \leq Z} |c_x|^2.
$$
\end{thm}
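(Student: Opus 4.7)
My plan is to prove the theorem via duality. The stated inequality is equivalent to the dual bound
\[
\sum_{\ord x \leq Z} \left| \sum_{\gamma \in \Gamma} a_\gamma \, e(x\gamma) \right|^2 \leq \max\{q^{Z+1}, q^{A-1}\} \sum_{\gamma \in \Gamma} |a_\gamma|^2
\]
for arbitrary finitely supported $(a_\gamma)_{\gamma \in \Gamma}$, since the linear map $(c_x) \mapsto (\mathcal{S}(\gamma))_\gamma$ has the same operator norm as its adjoint, and replacing $a_\gamma$ by $\overline{a_\gamma}$ absorbs the sign on the character. I prefer the dual formulation because the test functions are additive characters of $\mathbb{F}_q[t]$, so Lemma \ref{lemma Kubota 7} applies directly.

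The first step is to expand the square and swap summations, obtaining
\[
\sum_{\gamma_1, \gamma_2 \in \Gamma} a_{\gamma_1} \overline{a_{\gamma_2}} \sum_{\ord x \leq Z} e\bigl((\gamma_1 - \gamma_2) x\bigr),
\]
and then to apply Lemma \ref{lemma Kubota 7} with $Y = Z$ and $\beta = \gamma_1 - \gamma_2$; since $\gamma_1, \gamma_2 \in \mathbb{T}$ forces $\|\gamma_1 - \gamma_2\| = \gamma_1 - \gamma_2$, the inner sum collapses to $q^{Z+1}$ exactly when $\ord(\gamma_1 - \gamma_2) \leq -Z-2$ and vanishes otherwise. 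The diagonal always contributes $q^{Z+1} \sum_\gamma |a_\gamma|^2$. If $Z+1 \geq A-1$, then the spacing hypothesis $\ord(\gamma_1 - \gamma_2) \geq -A+1$ for distinct $\gamma_i$ already kills every off-diagonal pair, and the total equals $q^{Z+1} \sum_\gamma |a_\gamma|^2 = \max\{q^{Z+1}, q^{A-1}\} \sum_\gamma |a_\gamma|^2$, so this case is finished immediately.

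In the complementary case $Z+1 < A-1$, the surviving off-diagonal pairs are those with $\ord(\gamma_1 - \gamma_2) \leq -Z-2$. Because $\mathbb{K}_\infty$ is ultrametric, this defines an equivalence relation on $\Gamma$, and partitioning into classes $C_1, \ldots, C_m$ rewrites the whole quantity as $q^{Z+1} \sum_j \bigl|\sum_{\gamma \in C_j} a_\gamma\bigr|^2$. Cauchy--Schwarz inside each class introduces a factor of $|C_j|$, and the heart of the proof is the counting bound $|C_j| \leq q^{A-Z-2}$: each class lies inside a single ball of Haar measure $q^{-Z-1}$, whereas the spacing hypothesis makes the balls $\{\alpha : \ord(\alpha - \gamma) \leq -A\}$ of measure $q^{-A+1}$ around distinct points of $C_j$ disjoint and, by ultrametric nesting (using $A \geq Z+3$), contained in that larger ball, so a direct volume count suffices. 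Combining, the total is bounded by $q^{Z+1} \cdot q^{A-Z-2} \sum_\gamma |a_\gamma|^2 = q^{A-1} \sum_\gamma |a_\gamma|^2$, finishing the dual bound. I expect the main hazard to be the careful bookkeeping of strict versus weak $\ord$ inequalities, since the spacing hypothesis, the threshold in Lemma \ref{lemma Kubota 7}, and the ultrametric ball comparisons each come with off-by-one choices that must be reconciled.
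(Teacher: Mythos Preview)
The paper does not prove this statement: it is quoted verbatim as Theorem~2.4 of Hsu~\cite{H} and used as a black box, so there is no proof in the paper to compare against.

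Your argument is the standard duality proof of the large sieve, executed correctly in the function-field setting. The expansion, the application of Lemma~\ref{lemma Kubota 7}, the ultrametric partition into equivalence classes, and the volume count $|C_j|\le q^{A-Z-2}$ are all sound; the off-by-one bookkeeping you worried about checks out (in particular, $Z+1<A-1$ gives $A\ge Z+3$, which is exactly what is needed for the small balls of radius $q^{-A}$ to nest inside the class ball of radius $q^{-Z-2}$). Two small remarks: first, you tacitly assume $\Gamma\subseteq\mathbb{T}$ when writing $\|\gamma_1-\gamma_2\|=\gamma_1-\gamma_2$, whereas the stated hypothesis is only $\Gamma\subseteq\mathbb{K}_\infty$; this is harmless since $e(x\gamma)$ depends only on $\|\gamma\|$ and the spacing is stated ``in $\mathbb{T}$'', so one may replace each $\gamma$ by $\|\gamma\|$ at the outset. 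Second, your proof yields $\le$ rather than the strict $<$ printed in the statement, and indeed equality can occur (take $|\Gamma|=1$ and $c_x=e(-x\gamma_0)$ with $A-1\le Z+1$), so the non-strict inequality is the correct formulation and is what the applications in the paper actually require.
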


Recall $I_X:=\{ x \in \mathbb{F}_q[t] : \ord x < X \}$.
Let $k \geq 3$, $\theta \in \mathfrak{m}_{k}$, $0 \not = c \in \mathbb{F}_q$, $\alpha \in \mathbb{T}$, and $j_0$ be as defined in Section \ref{Prelim}.
In this section, we find an upper bound for the following exponential sum,
\begin{equation}
\label{def psi}
\psi(\theta , \alpha) = q^{-X} \sum_{ y \in I_X} \sum_{\ord h \leq j_0 (X-1) } e(- chy^{k-j_0} \theta - \alpha h).
\end{equation}
The estimates obtained for $\psi(\theta , \alpha)$ is one of our main ingredients for
computing the minor arc estimates in Section \ref{section minor arc}.
To achieve this goal, the precise value of $j_0$ with respect to $k$
plays an important role. Hence, we consider the following two cases
separately: $p \nmid (k-1)$ and $k = m p^b + 1$ with $m, b \in \mathbb{N}$, $m>1$, and $p \nmid m$.
We do not consider the case $k = p^b+1$ here, because we apply a different method to bound the minor arcs
in this case.

First, we make several observations, which we use throughout this section.
Let $\theta = a/g + \beta$, where $(a,g)=1$.
Let $x$, $y \in I_X$ and
$x \not = y$.
Then, since $\| \cdot \|$ is $\mathbb{F}_q$-linear, 
we have
\begin{eqnarray}
\label{eqn pr 1}
&&\ord ( \| c x^{k-j_0} \theta  + \alpha \| - \|c y^{k-j_0} \theta  + \alpha \| )
\\
\notag
&=& \ord \| (x^{k-j_0} - y^{k-j_0}) \theta  \|
\\
\notag
&=& \ord ( \| (x^{k-j_0}-y^{k-j_0}) a/g \| + \| (x^{k-j_0}-y^{k-j_0}) \beta  \| ).
\end{eqnarray}
Since $\mathbb{F}_q[t]$ is a unique factorization domain,
we have $(x^{k-j_0} - y^{k-j_0}) a \not = 0$ as long as $a \not = 0$.
Note that it is possible to get $a = 0$, when $\ord g = 0.$

Suppose $(x^{k-j_0} - y^{k-j_0})a/g \in \mathbb{F}_q[t]$. Then, we have $ \| (x^{k-j_0}-y^{k-j_0}) a/g \| = 0$
and
\begin{eqnarray}
\label{eqn pr 2}
\ord ( \| c x^{k-j_0} \theta  + \alpha \| - \|c y^{k-j_0} \theta  + \alpha \| )
= \ord \| (x^{k-j_0}-y^{k-j_0}) \beta  \| .
\end{eqnarray}

On the other hand, if $(x^{k-j_0} - y^{k-j_0})a/g \not \in \mathbb{F}_q[t]$, write
$$
\frac{a}{g} (x^{k-j_0} - y^{k-j_0}) = s_0 + a_{-j}t^{-j} + a_{-j-1}t^{-j-1} + ...
$$
with $s_0 \in \mathbb{F}_q[t]$, $a_i \in \mathbb{F}_q$ for $i \leq -j \leq -1$ and $a_{-j} \not = 0$.
Here we know such $a_{-j} \not = 0$ exists, because
$(x^{k-j_0} - y^{k-j_0})a/g \not \in \mathbb{F}_q[t]$.
Then it follows that
$$
a (x^{k-j_0} - y^{k-j_0}) - g s_0 = g (a_{-j}t^{-j} + a_{-j-1}t^{-j-1} + ... \ ).
$$
Since the left hand side is a polynomial, we have $- j + \ord g \geq 0$. Consequently,
we obtain
\begin{equation}
\label{eqn pr 3}
 \ord \| (x^{k-j_0} - y^{k-j_0})a/g \| \geq - \ord g.
\end{equation}

\subsection{Case $p \nmid (k-1)$}
Here we have $j_0 = k-1$, or equivalently $k - j_0 =1$. In this situation,
we obtain an upper bound for $\psi(\theta, \alpha)$
in a way analogous to the case for integers in \cite{W1}.
We have the following lemma.

\begin{lem}
\label{lemma case 1}
Suppose $k \geq 3$, $p \nmid k$, and $p \nmid (k-1)$. Let $\theta \in \mathfrak{m}_{k}$ and $\alpha \in \mathbb{T}$.
Then we have
$$
\psi(\theta , \alpha) \leq q^{(j_0 - 1)X}.
$$
\end{lem}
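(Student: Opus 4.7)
The plan is to collapse the inner sum over $h$ using Lemma~\ref{lemma Kubota 7} and then show that the minor arc condition forces the number of surviving $y$'s to be small. Since $p\nmid(k-1)$ we have $j_0 = k-1$, hence $k-j_0 = 1$, and the inner sum takes the form $\sum_{\ord h \leq (k-1)(X-1)} e((-cy\theta - \alpha)h)$. Applying Lemma~\ref{lemma Kubota 7} with $\beta = -cy\theta - \alpha$ and $Y = (k-1)(X-1)$, and using $\mathbb{F}_q$-linearity of $\|\cdot\|$ to equate $\ord\|\beta\|$ with $\ord\|cy\theta+\alpha\|$, the inner sum equals $q^{(k-1)(X-1)+1}$ when $\ord\|cy\theta+\alpha\| < -(k-1)(X-1)-1$ and vanishes otherwise. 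Thus
\[
\psi(\theta,\alpha) = q^{(k-2)X - k + 2}\,N,
\]
where $N$ counts the $y\in I_X$ satisfying this fractional-part condition, and the desired bound reduces to the claim $N \leq q^{k-2}$.

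Next I would exploit $\theta\in\mathfrak{m}_k$, writing $\theta = a/g + \beta$ with $(a,g)=1$, $g$ monic, $X<\ord g \leq (k-1)X$, and $\ord\beta < -(k-1)X - \ord g$. Fix one solution $y_0$ (if any exists) and consider any other solution $y$; set $z = c(y-y_0)\in I_X\setminus\{0\}$. Using the $\mathbb{F}_q$-linearity of $\|\cdot\|$ exactly as in ~(\ref{eqn pr 1}), the difference of the two fractional parts is $\|z\theta\|$, and since each summand has order $<-(k-1)(X-1)-1$ so does $\|z\theta\|$. I would then analyze $z\theta = za/g + z\beta$: the elementary bound $\ord(z\beta) < -(k-1)X$ dominates (from below, in absolute value) the estimate $\ord\|za/g\|\geq -\ord g \geq -(k-1)X$ obtained from ~(\ref{eqn pr 3}) (noting $za/g\notin\mathbb{F}_q[t]$ because $\ord z < X < \ord g$ and $(a,g)=1$ prevent $g\mid za$). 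The strict inequality $\ord\|za/g\| > \ord(z\beta)$, combined with ~(\ref{prop of ord}) in the form of ~(\ref{eqn pr 2}), gives $\ord\|z\theta\| = \ord\|za/g\| = \ord(za\bmod g) - \ord g$.

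Consequently each admissible $z$ satisfies $\ord(za\bmod g) < L$, where $L := \ord g - (k-1)(X-1) - 1$. The constraint $\ord g \leq R_k = (k-1)X$ yields $L \leq k-2$. Since multiplication by $a$ is a bijection on $\mathbb{F}_q[t]/g$, the residues $za\bmod g$ of order $<L$ number at most $q^L$, and distinct $z\in I_X$ yield distinct residues mod $g$ (because $\ord z < X < \ord g$). Hence $N \leq q^L \leq q^{k-2}$, and substituting back gives $\psi(\theta,\alpha)\leq q^{(k-2)X - k + 2}\cdot q^{k-2} = q^{(k-2)X} = q^{(j_0-1)X}$.

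The main technical obstacle is the order bookkeeping in the middle step: one must verify that the rational-approximation term $\|za/g\|$ strictly dominates the perturbation $z\beta$ so that the linearity identity ~(\ref{prop of ord}) applies cleanly, and that the auxiliary constant $L$ indeed comes out bounded by $k-2$ when $\ord g$ is pushed to the extreme $R_k = (k-1)X$ allowed on the minor arcs. Once the orders line up, the residue count in $\mathbb{F}_q[t]/g$ is a short pigeonhole-style bijection argument.
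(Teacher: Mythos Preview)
Your proof is correct and follows essentially the same approach as the paper. Both arguments apply Lemma~\ref{lemma Kubota 7} to reduce the question to counting $y\in I_X$ with $\ord\|cy\theta+\alpha\|<-(k-1)(X-1)-1$, and both exploit the minor arc decomposition $\theta=a/g+\beta$ to show that for distinct solutions the dominant term is $\|(y-y_0)a/g\|$, forcing at most $q^{k-2}$ solutions. The only cosmetic difference is that the paper phrases the final count as a pigeonhole on the $(k-2)$-tuple of coefficients of $\|cy\theta+\alpha\|$ in positions $-(k-1)(X-1)-2$ through $-R_k$, whereas you phrase it as an injection $z\mapsto za\bmod g$ into the set of residues of order $<L\leq k-2$; these are equivalent formulations of the same bound.
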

\begin{proof}
Let $\theta = a/g + \beta \in \mathfrak{M}_{k}(g,a) \subseteq \mathfrak{m}_{k}$.
Let $x$, $y \in I_X$ and
$x \not = y$.
Then, we know
$(x^{k-j_0} - y^{k-j_0})a/g \not \in \mathbb{F}_q[t]$,
because $k - j_0 = 1$ and $\ord g > X$. Consequently, we have ~(\ref{eqn pr 3}).
Recall $R_k = (k-1)X$. For simplicity we let $R = R_k$.
Since $R 
> (k-j_0) (X-1)$ and $\ord \beta < (- R -  \ord g)$,
we have
$$
\ord (x^{k-j_0}-y^{k-j_0}) \beta < (k-j_0)(X-1) - R - \ord g  < - \ord g \leq 0.
$$
Thus, 
we obtain from ~(\ref{prop of ord}) and ~(\ref{eqn pr 1})
\begin{eqnarray}
\label{eqn i 4}
\ord \left( \|  c x^{k-j_0} \theta  + \alpha \| - \|  c y^{k-j_0} \theta + \alpha \| \right)
&=&
\ord \|  (x^{k-j_0}-y^{k-j_0}) a/g \|
\\
\notag
&\geq& - \ord g
\notag
\\
&\geq& -R.
\notag
\end{eqnarray}

Suppose there exists $y \in I_X$
such that
$\ord \| c y^{k - j_0} \theta + \alpha \| < (-j_0 (X-1) -1)$, or equivalently,
\begin{equation}
\label{bdd ord 1}
\ord \| c y \theta + \alpha \| < - (k-1) (X-1) - 1.
\end{equation}
This means
the first $\left( (k-1) (X-1) + 1 \right)$ coefficients of $\| c y \theta + \alpha \|$
are $0$. Hence, it takes the form
$$
\|  c y \theta + \alpha \| = 0 \ t^{-1} + 0 \ t^{-2} + ...
\ + 0 \ t^{-(k-1) (X-1)-1} + a_{- (k-1) (X-1) -2 } t^{-(k-1) (X-1)-2} + ... \ + a_{-R}t^{-R} + ... \ .
$$
Note that there are only $q^{k-2}$ possibilities for the $(k-2)$-tuple
$(a_{-(k-1)(X-1) -2}, ... \ , a_{-R})$. Thus, if there are more than
$q^{k-2}$ such polynomials $y \in I_X$ satisfying ~(\ref{bdd ord 1}),
then by the pigeon hole principle there exists a pair $x$ and $y$ in $I_X$ for which
the first $R$ coefficients of $\| c x \theta + \alpha \|$ and $\| c y \theta + \alpha \|$
agree. However, this contradicts ~(\ref{eqn i 4}). Therefore, it follows by ~(\ref{def psi}) and Lemma \ref{lemma Kubota 7} that
$$
\psi(\theta , \alpha) \leq  q^{-X + k - 2 + (k-1)(X-1)+1} = q^{(k-2)X}.
$$
\end{proof}

\subsection{Case $k = m p^b + 1$ with $m > 1$}
Here we have $j_0 = k - p^b > p^b = k - j_0$.
When $p \nmid (k-1) $, we had that the difference between $j_0 (X-1) +1$ and $R_k = (k-1)X$
was small enough compared to $X$ - in fact it was constant with respect to $X$ - which was the reason our application of the pigeon hole
principle was effective in Lemma \ref{lemma case 1}. However, when $k = m p^b + 1$ this is no longer the case
as $R_k - j_0(X-1) - 1 = (p^b - 1)X + (k -p^b -1)$. 

It follows from the definition of the major arcs that $\mathfrak{M}_{k} \subseteq \mathfrak{M}_{k - j_0 + 1}$,
hence $\mathfrak{m}_{k - j_0 +1} \subseteq \mathfrak{m}_{k}$.
Therefore, given $\theta \in \mathfrak{m}_{k}$, we have
either $\theta \in \mathfrak{m}_{k - j_0 +1}$ or $\theta \in \mathfrak{M}_{k - j_0+1}$.
We consider these two cases separately in Lemmas \ref{lem 7} and \ref{lem 8}.
The argument in Lemma \ref{lem 7} is similar to that of Lemma \ref{lemma case 1}.
However, in Lemma  \ref{lem 8} we use a different approach, which
relies on the large sieve inequality given in Theorem \ref{large sieve} instead.


\begin{lem}
\label{lem 7}
Let $k = m p^b +1$ with $m>1$ and $\theta \in \mathfrak{m}_{k}$. Suppose $\theta \in \mathfrak{m}_{k - j_0+1}.$
Then we have
$$
\psi(\theta , \alpha) \ll q^{(j_0 - 1/p^b ) X },
$$
where the implicit constant depends only on $q$ and $k$.
\end{lem}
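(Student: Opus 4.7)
The plan is to mimic the pigeon hole argument of Lemma~\ref{lemma case 1}, adapted to handle the fact that $k - j_0 = p^b > 1$. I would first evaluate the inner sum over $h$ in~(\ref{def psi}) via Lemma~\ref{lemma Kubota 7}: writing $\xi_y := c y^{p^b} \theta + \alpha$, the geometric sum $\sum_{\ord h \leq j_0(X-1)} e(-h \xi_y)$ equals $q^{j_0(X-1)+1}$ when $\ord \| \xi_y \| < -j_0(X-1) - 1$ and vanishes otherwise, so
\[
\psi(\theta, \alpha) = q^{-X + j_0(X-1) + 1} \, N, \qquad N := \#\{ y \in I_X : \ord \| c y^{p^b} \theta + \alpha \| < -j_0(X-1) - 1 \}.
\]
It then suffices to prove $N \ll q^{(1 - 1/p^b) X}$ with implicit constant depending only on $q$ and $k$. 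Using the hypothesis $\theta \in \mathfrak{m}_{k - j_0 + 1} = \mathfrak{m}_{p^b + 1}$, I would fix a representation $\theta = a/g + \beta$ with $(a,g) = 1$, $g$ monic, $X < \ord g \leq p^b X$, and $\ord \beta < -p^b X - \ord g$.

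The new ingredient is a companion divisor that absorbs the $p^b$-th power: writing $g = \prod_i p_i^{e_i}$ in $\mathbb{F}_q[t]$, I would set $g^* := \prod_i p_i^{\lceil e_i / p^b \rceil}$, so that for any $z \in \mathbb{F}_q[t]$, $g \mid z^{p^b}$ if and only if $g^* \mid z$, and $\ord g^* \geq \ord g / p^b$. For distinct $y_1, y_2 \in I_X$ with $g^* \nmid (y_1 - y_2)$ (the analogue of Case~A of Lemma~\ref{lemma case 1}), the Frobenius identity $(y_1 - y_2)^{p^b} = y_1^{p^b} - y_2^{p^b}$ gives $(y_1 - y_2)^{p^b} a/g \notin \mathbb{F}_q[t]$, so by~(\ref{eqn pr 3}) one has $\ord \| c(y_1 - y_2)^{p^b} a/g \| \geq -\ord g$. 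Since $\ord c(y_1 - y_2)^{p^b}\beta \leq p^b(X-1) + \ord \beta < -\ord g - 1$, property~(\ref{prop of ord}) and the $\mathbb{F}_q$-linearity of $\| \cdot \|$ yield
\[
\ord \bigl( \|cy_1^{p^b}\theta + \alpha\| - \|cy_2^{p^b}\theta + \alpha\| \bigr) = \ord \| c(y_1 - y_2)^{p^b} a/g \| \geq -\ord g.
\]

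The decisive point exploiting $m > 1$ is that $j_0 = (m-1) p^b + 1 > p^b$, so $\ord g \leq p^b X \leq j_0(X-1) + 1$ whenever $X \geq (j_0 - 1)/(j_0 - p^b)$. In this regime, if two elements $y_1, y_2$ of $S := \{ y \in I_X : \ord\|cy^{p^b}\theta + \alpha\| < -j_0(X-1) - 1 \}$ were in Case~A, their difference $\|cy_1^{p^b}\theta + \alpha\| - \|cy_2^{p^b}\theta + \alpha\|$ would vanish at positions $-1,\ldots,-j_0(X-1) - 1$ (by the defining condition of $S$) and at positions $\leq -\ord g - 1$ (by the displayed order bound); since these two ranges exhaust all negative positions once $\ord g \leq j_0(X-1) + 1$, the difference would vanish identically, contradicting $\| c(y_1 - y_2)^{p^b} a/g \| \neq 0$ in Case~A. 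Consequently $S$ lies in a single coset of $g^* \mathbb{F}_q[t]$, giving
\[
N \leq \max\bigl(1,\, q^{X - \ord g^*}\bigr) \leq q^{X - \ord g / p^b} \leq q^{(1 - 1/p^b) X},
\]
and substitution into the formula for $\psi$ yields $\psi(\theta, \alpha) \ll q^{(j_0 - 1/p^b) X}$, with finitely many small $X$ absorbed into the implicit constant. The main obstacle is precisely this Frobenius thickening: in Lemma~\ref{lemma case 1} the coprimality $(a,g) = 1$ together with $\ord g > X$ immediately forced $(y_1 - y_2) a/g \notin \mathbb{F}_q[t]$, but for the $p^b$-th power this fails whenever $g \mid (y_1 - y_2)^{p^b}$; the introduction of $g^*$ together with the tighter bound $\ord g \leq p^b X$ coming from the stronger hypothesis $\theta \in \mathfrak{m}_{p^b + 1}$ is what restores the estimate.
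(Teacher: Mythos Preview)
Your overall strategy is sound and is a genuine alternative to the paper's: instead of partitioning $I_X$ into $q^{X-\lfloor X/p^b\rfloor}$ boxes according to the top coefficients (so that any two elements of a box differ by something of degree $<\lfloor X/p^b\rfloor$, whence the $p^b$-th power has degree $<X<\ord g$ and the rational part is non-integral), you work arithmetically via the companion divisor $g^*$ satisfying $g\mid z^{p^b}\Leftrightarrow g^*\mid z$, and show that the set $S$ lies in a single coset of $g^*\mathbb{F}_q[t]$. Both routes yield $N\ll q^{(1-1/p^b)X}$; yours has the minor advantage of giving $N\leq q^{X-\ord g/p^b}$ directly in terms of $\ord g$.

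There is, however, a real slip in the contradiction step. You claim that the difference ``vanishes at positions $\leq -\ord g-1$ (by the displayed order bound)'', but the bound $\ord(\text{diff})\geq -\ord g$ only places the \emph{leading} nonzero coefficient at a position $\geq -\ord g$; it says nothing about positions below. For instance $\|1/(t^2+1)\|=t^{-2}-t^{-4}+t^{-6}-\cdots$ has $\ord=-2=-\ord g$ yet does not vanish at positions $\leq -3$. So you cannot conclude that the difference is identically zero, and the argument as written does not close. The repair is immediate and is exactly what the paper does: in Case~A the difference is \emph{nonzero} (since $\|c(y_1-y_2)^{p^b}a/g\|\neq 0$ strictly dominates the $\beta$-part), so its leading nonzero coefficient sits at some position $n$ with $-\ord g\leq n\leq -1$. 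Once $\ord g\leq j_0(X-1)+1$ (your threshold $X\geq (j_0-1)/(j_0-p^b)$), this forces $n\in\{-1,\ldots,-(j_0(X-1)+1)\}$, all of which carry zero coefficients because $y_1,y_2\in S$. That is already the contradiction; no claim that $\text{diff}=0$ is needed. With this correction your $g^*$ argument goes through and gives the stated bound.
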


\begin{proof} Let $\theta = a/g + \beta \in \mathfrak{M}_{k - j_0 + 1} (g,a) \subseteq \mathfrak{m}_{k - j_0+1}$, and we know $R' \geq \ord g > X$,
where $R' = R_{k - j_0 + 1} = (k - j_0)X$.
Given $y \in I_X$, it takes the form
\begin{equation}
\label{form of y 3}
y = c_{X-1} t^{X-1} + ... + c_{ \lfloor X/p^b \rfloor }t^{ \lfloor X/p^b \rfloor } + ... + c_0.
\end{equation}
Let $L = (X - \lfloor X/p^b \rfloor)$.
Order the $L$-tuples of elements of $\mathbb{F}_q$ in any way, for example, we may take one bijection between
$\mathbb{F}_q$ and $\{1, ..., q\}$, and use the lexicographic ordering on $(\mathbb{F}_q)^L$. We can then split $I_X$ into
$q^L$ subsets $T_1, T_2, ..., T_{q^L}$, where
\begin{eqnarray}
\notag
T_l = \{ y \in I_X: \text{ given $y$ in the form ~(\ref{form of y 3}), the coefficients }
\left( c_{X-1}, ..., c_{ \lfloor X/p^b \rfloor } \right) \phantom{1234567891}
\\
\notag
\text{ is exactly the }l \text{-th }  L\text{-tuple}\}.
\end{eqnarray}

Then, we have for some $T' = T_l$
\begin{equation}
\label{eqn 2 a 2}
\psi(\theta , \alpha) \ll q^{-X + X -  X/p^b } \Big{|} \sum_{y \in T'} \sum_{\text{ord }h \leq j_0 (X-1) } e(- chy^{k-j_0} \theta - \alpha h) \Big{|}.
\end{equation}

Given any distinct $x, y\in T'$, we have
$$
\ord  (x^{k-j_0} - y^{k-j_0}) = \ord  (x-y)^{p^b} \leq X,
$$
and hence,
$(x^{k-j_0} - y^{k-j_0})a/g \not \in \mathbb{F}_q[t]$.
Thus, by ~(\ref{eqn pr 3}) we have
$ \ord  \| (x^{k-j_0} - y^{k-j_0})a/g \| \geq - \ord  g$.
Since $\ord \beta < - R' -  \ord g $ and $R' = (k-j_0)X > X$,
we have $ \ord  (x^{k-j_0}-y^{k-j_0}) \beta < X - R' - \ord g  < - \ord g \leq 0$.
Therefore, by ~(\ref{prop of ord}) and ~(\ref{eqn pr 1}), we obtain
\begin{equation}
\label{eqn 2 a 1}
\ord  \left( \|  c x^{k-j_0} \theta  + \alpha \| - \|  c y^{k-j_0} \theta + \alpha \| \right) \geq - \text{ord }g \geq -R'.
\end{equation}

Suppose there exists $y \in T'$ such that
$\ord  \| c y^{k - j_0} \theta + \alpha \| < -j_0 (X-1) -1$.
This means
the first $j_0 (X-1) + 1$ coefficients of $\| c y^{k - j_0} \theta + \alpha \|$
must be $0$, or in other words it takes the form
$$
\|  c y^{k - j_0} \theta + \alpha \| = 0 \ t^{-1} + 0 \ t^{-2} + ...
\ + 0 \ t^{-j_0 (X-1)-1} + a_{- j_0 (X-1) -2 } t^{-j_0 (X-1)-2} + ...
$$
If there is another distinct $x \in T'$, which satisfies the same
condition, then the first $j_0 (X-1) + 1$ coefficients of
$\| c x^{k - j_0} \theta + \alpha \|$ agree with that
of $\| c y^{k - j_0} \theta + \alpha \|$.
However, this contradicts ~(\ref{eqn 2 a 1})
as $R' = (k-j_0)X < j_0(X-1) + 1$ for $X$ sufficiently large.
Hence, there is at most one such $y$. Therefore,
it follows by ~(\ref{eqn 2 a 2}) and Lemma \ref{lemma Kubota 7} that
$$
\psi(\theta , \alpha) \ll  q^{-X + X -  X/p^b  + j_0 (X-1) + 1 } \ll q^{(j_0 - 1/p^b ) X }.
$$
\end{proof}

\begin{lem}
\label{lem 8}
Let $k = m p^b +1$ with $m>1$ and $\theta \in \mathfrak{m}_{k}$. Suppose $\theta \in \mathfrak{M}_{k - j_0+1}$.
Then we have
$$
\psi(\theta , \alpha) \ll  q^{(j_0 - 1/(4p^b)) X },
$$
where the implicit constant depends only on $q$.
\end{lem}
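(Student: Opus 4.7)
The plan is to adapt the Cauchy--Schwarz plus large sieve approach used in the (now discarded) full $\theta \in \mathfrak{m}_{k}$ argument, but crucially to exploit the new major-arc constraint. Writing $\theta = a/g + \beta$ with $(a,g) = 1$ and $\ord g \leq X$, the hypothesis $\theta \in \mathfrak{M}_{k-j_0+1}$ yields $\ord \beta < -p^b X - \ord g$, a dramatic improvement over the bound $\ord \beta < -X - \ord g$ forced merely by $\theta \in \mathfrak{M}_2$. Simultaneously, $\theta \notin \mathfrak{M}_{k}$ yields $\ord \beta \geq -R_k - \ord g$. Applying Cauchy--Schwarz to $\psi(\theta, \alpha)$ reduces the estimate to bounding
\begin{equation*}
S = \sum_{y \in I_X} \left| \sum_{\ord h \leq j_0(X-1)} e(-chy^{p^b}\theta - \alpha h) \right|^2.
\end{equation*}
Each inner sum is of the form $\mathcal{S}(\gamma_y)$ in Theorem \ref{large sieve}, with $\gamma_y = -(cy^{p^b}\theta + \alpha)$. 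The strategy is to partition $I_X$ into $q^{X(1 - 1/(2p^b)) + O(1)}$ classes on each of which the $\gamma_y$'s are spaced by at least $q^{-(j_0 X + O(1))}$ in $\mathbb{T}$; Theorem \ref{large sieve} then bounds each partial sum of squares by $q^{2j_0 X + O(1)}$.

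The partition is chosen by cases at the threshold $\ord g = X/2$. When $\ord g > X/2$, fix the top $X - \lfloor X/(2p^b) \rfloor$ coefficients of each $y \in I_X$. Distinct $y_1, y_2$ in a class then satisfy $\ord(y_1 - y_2)^{p^b} < X/2 < \ord g$, whence $(y_1^{p^b} - y_2^{p^b}) a/g \notin \mathbb{F}_q[t]$ and ~(\ref{eqn pr 3}) gives $\ord \|(y_1^{p^b} - y_2^{p^b}) a/g\| \geq -\ord g$; coupled with $\ord((y_1 - y_2)^{p^b} \beta) < X/2 - p^b X - \ord g < -\ord g$, which uses the strengthened $\beta$-bound and $p^b \geq 2$, property ~(\ref{prop of ord}) and ~(\ref{eqn pr 1}) produce the required spacing $\geq q^{-\ord g} \geq q^{-X}$. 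When $\ord g \leq X/2$, fix instead the bottom $B_1 := \lceil X(1 - 1/(2p^b)) \rceil$ coefficients of each $y$; then distinct $y_1, y_2$ in a class have $\ord(y_1 - y_2) \geq B_1$. If $(y_1^{p^b} - y_2^{p^b}) a/g \in \mathbb{F}_q[t]$, then ~(\ref{eqn pr 2}) combined with $\ord \beta \geq -R_k - \ord g$ and the identity $j_0 = (m-1)p^b + 1$ gives
\begin{equation*}
\ord \|(y_1^{p^b} - y_2^{p^b}) c \theta\| \geq p^b B_1 + \ord \beta \geq -j_0 X + O(1);
\end{equation*}
otherwise the previous case's argument applies and produces spacing $\geq -\ord g \geq -X/2$. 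Either way the spacing requirement is met.

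Running Theorem \ref{large sieve} in each class and summing yields $|\psi(\theta, \alpha)|^2 \ll q^{(2j_0 - 1/(2p^b))X + O(1)}$, which gives the claim. The principal technical obstacle is the joint choice of the threshold $X/2$ and the bottom-coefficient size $B_1$: in the large-$\ord g$ case the new upper bound on $\ord \beta$ must be strong enough to dominate the $\beta$-perturbation in the ultrametric step, while in the small-$\ord g$ case the lower bound $\ord \beta \geq -R_k - \ord g$ forced by $\theta \notin \mathfrak{M}_k$ must be absorbed by the size of $B_1$. These two constraints balance precisely at $\ord g = X/2$ and $B_1 = X(1 - 1/(2p^b))$, producing the uniform saving $1/(4p^b)$ claimed in the lemma.
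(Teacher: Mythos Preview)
Your argument is correct and follows essentially the same route as the paper's proof: Cauchy--Schwarz, a case split on the size of $\ord g$, a partition of $I_X$ by fixing either top or bottom coefficients, spacing estimates via (\ref{eqn pr 1})--(\ref{eqn pr 3}) and the ultrametric property, and then the large sieve. The only real difference is presentational: the paper carries free parameters $\delta'$ (the threshold for $\ord g$) and $\epsilon$ (controlling how many bottom coefficients are fixed), derives the constraint $1-(k-j_0)\epsilon-\delta'\ge 0$, and optimizes at the end to find $\delta'=1/2$, $\epsilon=1/(2p^b)$; you plug these optimal values in from the outset, which is why your threshold $X/2$ and your $B_1=\lceil X(1-1/(2p^b))\rceil$ appear without derivation. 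One small point worth making explicit: when you say in Case~2 that ``the previous case's argument applies'' in the sub-case $(y_1^{p^b}-y_2^{p^b})a/g\notin\mathbb{F}_q[t]$, you should note that the $\beta$-perturbation is still dominated, since $\ord((y_1-y_2)^{p^b}\beta)\le p^b(X-1)-p^bX-\ord g<-\ord g$; this is not literally Case~1's bound on $\ord(y_1-y_2)^{p^b}$ but the same ultrametric mechanism, and it goes through.
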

\begin{proof}
Let $\theta = a/g + \beta \in \mathfrak{M}_{k - j_0 + 1} (g,a)
 \subseteq \mathfrak{M}_{k - j_0+1}$.
Then, we have
$\ord g \leq X$
and
\begin{equation}
\label{lower bound beta 2.3}
-R_k - \ord  g \leq \ord \beta  < -R_{k - j_0 + 1} - \ord  g,
\end{equation}
where $R_k = (k-1)X$ and $R_{k - j_0 + 1} = (k - j_0)X$. For simplicity, we denote
$R = R_k$ and $R' = R_{k - j_0 + 1}$.
We have the above lower bound, for otherwise it would mean $\theta \in \mathfrak{M}_{k}$.

By the Cauchy-Schwartz inequality, we obtain
\begin{eqnarray}
\label{C-S iib}
\psi(\theta , \alpha)
\ll
q^{-X} q^{X/2}
S^{1/2},
\end{eqnarray}
where
$$
S = \sum_{y \in I_X} \Big{|} \sum_{\ord h \leq j_0 (X-1) } e(- chy^{k-j_0} \theta - \alpha h) \Big{|}^2.
$$

Let $\delta' > 0$ be sufficiently small, and in particular we make sure $\delta' \leq 1$.
We consider two cases: $\ord g > \delta' X $ and $\ord g \leq \delta' X $.

Case 1: Suppose $\ord g > \delta' X $.
Given $y \in I_X$, it takes the form
\begin{equation}
\label{form of y 1}
y = c_{X-1} t^{X-1} + ... + c_{ \lfloor \delta' X/p^b \rfloor }t^{ \lfloor \delta' X/p^b \rfloor } + ... + c_0.
\end{equation}
Let $L = X - \lfloor \delta' X/p^b \rfloor$.
Order the $L$-tuples of elements of $\mathbb{F}_q$ in any way. 
We can then split $I_X$ into
$q^{L}$ subsets, $T_1, T_2, ..., T_{q^L}$, where
\begin{eqnarray}
\notag
T_l = \{ y \in I_X: \text{given $y$ in the form ~(\ref{form of y 1}), the coefficients }
\left( c_{X-1}, ..., c_{ \lfloor \delta' X/p^b \rfloor } \right) \phantom{1234567891}
\\
\notag
\text{  is exactly the }l \text{-th }  L \text{-tuple}\}.
\end{eqnarray}

Then we have for some $T' = T_l$
\begin{equation}
\label{eqn ii b 1}
S \ll q^{ X - \delta' X/p^b } \sum_{y \in T'} \Big{|} \sum_{\ord h \leq j_0 (X-1) } e(- chy^{k-j_0} \theta - \alpha h)\Big{|}^2.
\end{equation}

Recall $k - j_0 = p^b$. Given any $x, y\in T'$,
we have
$$
\ord \left( x^{k-j_0} - y^{k-j_0} \right) = \ord (x-y)^{p^b} \leq \delta' X,
$$
and hence,
$(x^{k-j_0} - y^{k-j_0})a/g \not \in \mathbb{F}_q[t]$.
Thus, we have
$ \ord \| (x^{k-j_0} - y^{k-j_0})a/g \| \geq - \ord g$ by ~(\ref{eqn pr 3}).
Since $\ord \beta < (- R' -  \ord g)$ and $R' = X > \delta' X$,
we have $$ \ord (x^{k-j_0}-y^{k-j_0}) \beta < \delta' X - R' - \ord g  < - \ord g \leq 0.$$
Therefore, by ~(\ref{prop of ord}) and ~(\ref{eqn pr 1}), we obtain
\begin{equation}
\label{eqn ii b 2}
\ord \left( \| ( c x^{k-j_0} \theta  + \alpha) \| - \| ( c y^{k-j_0} \theta + \alpha) \| \right) \geq - \ord g \geq -X.
\end{equation}

Since
$ \max \{X, j_0 (X-1)+1 \}
\leq j_0 X$, we have by Theorem \ref{large sieve} 
\begin{equation}
\label{eqn iiib 3}
S \ll q^{ X -  \delta' X/p^b } \sum_{y \in T'} \Big{|} \sum_{\ord h \leq j_0 (X-1) } e(- chy^{k-j_0} \theta - \alpha h) \Big{|}^2
\ll
q^{ X -  \delta' X/p^b } q^{ 2 j_0 X}.
\end{equation}

Case 2: Suppose $\ord g \leq \delta' X $. Let $\epsilon > 0$ be sufficiently small.
We order the polynomials of degree less than
$L' = \lceil (1- \epsilon)X \rceil$ in any way, and call them $p_1, p_2, ..., p_{q^{L'}}$.
We then split $I_X$ into $q^{L'}$ subsets, $T_1, T_2, ..., T_{q^{L'}}$,
where given any $x \in T_l$, $1 \leq l \leq q^{L'}$,
the coefficients of $x$ for powers less than
$L'$ agree with that of $p_l$.
Thus, we have for some $T' = T_l$
\begin{equation}
\label{eqn iib 4}
S \ll q^{(1 - \epsilon) X} \sum_{y \in T'} \Big{|} \sum_{\ord h \leq j_0 (X-1) } e(- chy^{k-j_0} \theta - \alpha h) \Big{|}^2.
\end{equation}

Given any $x$, $y \in T'$ with
$x^{k - j_0} \not \equiv y^{k-j_0} (\text{mod }g)$, we have
$(x^{k-j_0} - y^{k-j_0})a/g \not \in \mathbb{F}_q[t]$.
Thus, by ~(\ref{eqn pr 3}) we have
$ \ord \| (x^{k-j_0} - y^{k-j_0})a/g \| \geq - \ord  g$.
Since $\ord  \beta < - R' -  \ord g$ and $R' = (k-j_0)X > ( k-j_0) (X-1)$,
we have
\begin{equation}
\label{bdd ord 2}
\ord  (x^{k-j_0}-y^{k-j_0}) \beta < (k-j_0)(X-1) - R' - \ord g  < - \ord g \leq 0.
\end{equation}
Therefore, by ~(\ref{prop of ord}) and ~(\ref{eqn pr 1}),
we obtain
\begin{equation}
\label{eqn iib 5}
\ord  \left( \| ( c x^{k-j_0} \theta  + \alpha) \| - \| ( c y^{k-j_0} \theta + \alpha) \| \right) \geq - \ord g \geq - \delta' X.
\end{equation}

On the other hand, suppose we have distinct $x,y \in T'$, where
$x^{k-j_0}  \equiv y^{k-j_0} (\text{mod }g)$. Then we have $(x^{k-j_0} - y^{k-j_0})a/g \in \mathbb{F}_q[t]$ from which ~(\ref{eqn pr 2}) follows.
Also, because $x,y \in T'$ and $k - j_0 = p^b$, we obtain
$$
\ord (x^{k-j_0} - y^{k - j_0}) = \ord (x-y)^{p^b} \geq p^b L'.
$$
Therefore, it follows by ~(\ref{eqn pr 2}), ~(\ref{lower bound beta 2.3}) and ~(\ref{bdd ord 2}),
\begin{eqnarray}
\ord  \left( \| ( c x^{k-j_0} \theta  + \alpha) \| - \| ( c y^{k-j_0} \theta + \alpha) \| \right)
&=& \ord   (x^{k-j_0} - y^{k-j_0}) \beta
\\
\notag
&\geq&
p^b L' - R - \ord g
\\
\notag
&\geq&
(k-j_0)(1 - \epsilon)X - (k-1)X - \delta' X
\\
\notag
&=&
-j_0 X + (1 - (k-j_0)\epsilon - \delta' )X
\\
\notag
&\geq&
-j_0 X.
\end{eqnarray}

Since
$ \max \{\delta' X, j_0 X, j_0 (X-1)+1 \}
\leq j_0 X$, we have by Theorem \ref{large sieve}
\begin{equation}
\label{eqn iib 6}
S \ll q^{(1 - \epsilon) X} \sum_{y \in T' } \Big{|} \sum_{\ord h \leq j_0 (X-1) } e(- chy^{k-j_0} \theta - \alpha h) \Big{|}^2
\ll
q^{(1 - \epsilon) X} q^{2 j_0 X}.
\end{equation}


Note that the only restrictions we had so far for $\delta'$ and $\epsilon$
were: $0 < \delta' \leq 1$, $0 < \epsilon $, and
\begin{equation}
\label{condition 1 case3}
0 \leq 1 - (k-j_0) \epsilon - \delta'.
\end{equation}
We have by ~(\ref{eqn iiib 3}) and (\ref{eqn iib 6})
$$
S \ll q^{ X - \delta' X/p^b } q^{ 2 j_0 X} + q^{(1 - \epsilon) X} q^{2 j_0 X}.
$$
In order to minimize the right hand side of the above inequality, we set $\epsilon = \delta' /p^b$.
Then, since $k - j_0 = p^b$, ~(\ref{condition 1 case3}) can be simplified to
$$
2 \delta' \leq 1.
$$
By letting $\delta' = 1/2$,
we obtain by ~(\ref{C-S iib})
$$
\psi(\theta , \alpha) \ll
q^{-X/2} S^{1/2}
\ll q^{\left(j_0 - \delta \right) X},
$$
where $\delta = 1/(4p^b)$.

\end{proof}


\section{A bound on the minor arcs}
\label{section minor arc}

We obtain estimates on the minor arcs in this section.
In Section \ref{minor arc case 1}, we give bounds on the minor arcs when $p \nmid (k-1)$ and $p = mk^b+1$, $m>1$.
The remaining case when $k = p^b + 1$ requires a different approach, and it is treated separately in Section \ref{minor arc case 2}.
The reason we require a different approach is that when $k = p^b+1$, the method in Section \ref{minor arc case 1} results in
an exponential sum that is more complicated to estimate than $\psi(\alpha, \theta)$. 
Thus we take a more basic approach in this case.

\subsection{Cases $p \nmid (k-1)$ and $p = mk^b+1$, $m>1$}
\label{minor arc case 1}

Let $\mathcal{R}'$ be as defined in ~(\ref{def R'}).
Recall from the paragraph after Lemma \ref{condition*} that $\card \mathcal{R}' = r$,
and $t_1, ..., t_r$ are the elements of
$\mathcal{R}'$ in increasing order. The main results of this section are the
following estimates on the minor arcs.
\begin{thm}
\label{Thm minor arc bound 1}
Suppose $k \geq 3$ and $p \nmid k$. Suppose further that either $p \nmid (k-1)$ or $p = m k^b+1$, $m>1$. Let $\kappa = \sum_{j=1}^r t_j$, where $\mathcal{R}' = \{ t_1, ..., t_r\}$ and $t_j \leq t_{j+1}$.
Let
\begin{equation}
\notag
\delta_0 =
\begin{cases}
1, & \mbox{if  } p \nmid (k-1), \\
\frac{1}{4p^b}, & \mbox{if  } k = mp^b + 1, m > 1. \\
\end{cases} \\
\end{equation}
Then we have
$$
\int_{\mathfrak{m}} |g(\alpha)|^{2s} \ d \alpha
\ll
q^{(\kappa - k - \delta_0)X} J_s(\mathcal{R}',X),
$$
where the implicit constant depends only on $q$ and $k$.
\end{thm}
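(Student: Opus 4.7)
The plan is to follow the Wooley-style minor arc scheme adapted to the function field setting, combining the pointwise bounds on $\psi(\theta,\alpha)$ from Lemmas \ref{lemma case 1}, \ref{lem 7} and \ref{lem 8} with a Hölder-type reduction to the Vinogradov mean value $J_s(\mathcal{R}',X)$ controlled by Theorem \ref{eff cong result}. The desired saving $\delta_0$ over the trivial estimate is exactly the uniform saving that $\psi$ provides on $\mathfrak{m}_k$, and the exponent $\kappa-k$ arises as the ``diagonal deficit'' between $2s$ and the total weight $\kappa = \sum_{j\in\mathcal{R}'} j$ of the Vinogradov system associated to $\mathcal{R}'$.

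First I would open $|g(\alpha)|^{2s}$ as a sum over $(\mathbf{x},\mathbf{y})\in I_X^s\times I_X^s$ with phase $\alpha(x_1^k+\cdots+x_s^k-y_1^k-\cdots-y_s^k)$, and introduce an auxiliary shift variable $h$ with $\ord h \leq j_0(X-1)$ via the binomial expansion
\begin{equation}
\notag
(x_i+h)^k - x_i^k \;=\; \sum_{j=1}^{k} \binom{k}{j} h^{\,j} x_i^{\,k-j}.
\end{equation}
By the design of $\mathcal{R}$ and $\mathcal{R}'$ (see the discussion surrounding \eqref{def R} and Lemma \ref{condition*}), the terms surviving modulo $p$ are precisely those with $k-j \in \mathcal{R}'$, and among them the term of smallest degree in $h$ and largest degree in $x_i$ has the shape $c\, h\, x_i^{k-j_0}$ for some $c\in\mathbb{F}_q^\times$. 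After a translation $x_i\mapsto x_i+h$ in $g(\alpha)^s$, which costs only a boundary factor of the form $q^{\bigO{\ord h}}$ since $x_i+h$ can sit in a slightly enlarged interval, and after averaging over $h$, I would apply Hölder's inequality to peel off the $(r-1)$ lower-order moments as factors of $J_s(\mathcal{R}',X)^{1/?}$ and to isolate the contribution of the top shift variable $h$ weighted by $x_i^{k-j_0}$.

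Next I would use the orthogonality relation (Lemma \ref{lemma orthogonality relation}) over the extra integration in $\theta$ produced by writing the $t_j$-equations $\sum_i x_i^{t_j} = \sum_i y_i^{t_j}$ via additive characters. This rewriting produces an inner sum of exactly the shape defining $\psi(\theta,\alpha)$ in \eqref{def psi}, evaluated at $\theta=\alpha\in\mathfrak{m}_k$. Taking absolute values, pulling $\sup_{\alpha\in\mathfrak{m}} |\psi(\alpha,\cdot)|$ out of the integral, and invoking Lemma \ref{lemma case 1} when $p\nmid(k-1)$ or the combination of Lemmas \ref{lem 7} and \ref{lem 8} when $k=mp^b+1$ with $m>1$ (using $\mathfrak{m}_k = (\mathfrak{m}_{k-j_0+1}) \sqcup (\mathfrak{M}_{k-j_0+1}\cap\mathfrak{m}_k)$ for the latter), I would replace the $\psi$-factor by $q^{(j_0-\delta_0)X}$ with $\delta_0$ as in the statement. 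What remains is a clean integral that, by orthogonality, equals $J_s(\mathcal{R}',X)$, and the surviving powers of $q^X$ add up to $\kappa-k-\delta_0$ after accounting for the $j_0$ in $\psi$ and the normalisations $q^{-X}$ and $q^{-(j_0(X-1)+1)}$ carried along the way.

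The principal obstacle will be the bookkeeping in the Hölder step: one must simultaneously extract the $r$ moment equations for $\mathcal{R}'$, track the correct exponents of $q^X$ as variables are collapsed, and verify that the boundary defect from shifting $I_X$ by $h$ with $\ord h$ as large as $j_0(X-1)$ is dominated by the $\psi$-saving $\delta_0$ rather than eating it up. A secondary nuisance is the case distinction in $\delta_0$, which forces one to package the two cases uniformly through $\mathfrak{m}_k \subseteq \mathfrak{m}_{k-j_0+1}\cup\mathfrak{M}_{k-j_0+1}$ in the second setting; once this dichotomy is handled, the same contour bookkeeping applies to both and yields the advertised $q^{(\kappa-k-\delta_0)X} J_s(\mathcal{R}',X)$.
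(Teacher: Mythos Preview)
Your high-level plan is right in spirit: the saving comes from the $\psi$-bound, and $J_s(\mathcal{R}',X)$ enters via orthogonality over the extra characters attached to $\mathcal{R}'$. But two concrete pieces of the mechanism are wrong, and as written the argument would not close.

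First, you conflate two different variables. In the paper the shift is by $y\in I_X$, not by $h$. The set $I_X$ is an additive subgroup of $\mathbb{F}_q[t]$, so the translation $x\mapsto x-y$ is \emph{exactly} a bijection of $I_X$ with itself and incurs no boundary cost whatsoever; there is no ``slightly enlarged interval''. The variable $h$ with $\ord h\leq j_0(X-1)$ is not a shift but the value of the $t_{r-1}$-th moment $\sigma_{s,t_{r-1}}(\mathbf{x})$, introduced by orthogonality. Your proposal to shift by an $h$ of order up to $j_0(X-1)$ and absorb a factor $q^{O(\ord h)}$ would lose roughly $q^{j_0 X}$, which is far larger than the saving $q^{-\delta_0 X}$ and would wreck the bound.

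Second, there is no H\"older step here. The $(r-2)$ lower moments are not ``peeled off'' as powers of $J_s(\mathcal{R}',X)$; instead one inserts characters $e(-\beta_{t_j}h_{t_j})$ for $1\leq j\leq r-2$, sums trivially over the $h_{t_j}$ (this is where the factor $q^{(\kappa-k-t_{r-1})X}$ comes from), and passes from $|g|^{2s}$ to $\oint|F(\boldsymbol\beta,\theta)|^{2s}\,d\boldsymbol\beta$. One then introduces the last character for $t_{r-1}$ to pass to $|f(\boldsymbol\alpha,\theta)|^{2s}$. The crucial algebraic point you are missing is that after the shift by $y$, the constraints $\sigma_{s,t_j}(\mathbf{x}-y)=0$ for $j\leq r-2$ together with $\sigma_{s,t_{r-1}}(\mathbf{x}-y)=h$ are \emph{equivalent} to their unshifted counterparts (this uses the structure of $\mathcal{R}'$ and Condition*), and under those constraints one has the exact identity $\sigma_{s,k}(\mathbf{x}-y)=\sigma_{s,k}(\mathbf{x})-c\,h\,y^{k-j_0}$ with $c=\binom{k}{j_0}\in\mathbb{F}_q^\times$. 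Averaging this identity over $y\in I_X$ is what produces $\psi(\theta,\alpha_{t_{r-1}})$ inside the integral, after which Lemmas~\ref{lemma case 1}, \ref{lem 7}, \ref{lem 8} give the factor $q^{(j_0-\delta_0)X}$, and what remains is exactly $J_s(\mathcal{R}',X)$ by orthogonality. The final exponent $(\kappa-k-\delta_0)X$ then drops out as $(\kappa-k-t_{r-1})+(j_0-\delta_0)$, since $t_{r-1}=j_0$.
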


Recall from above that if $p \nmid (k-1)$, then $r = k - \lfloor k/p \rfloor$.
On the other hand, if $k = m p^b + 1$, then $r = (1 - 1/p)(k-p^b) + (1 + 1/p).$ 

\begin{cor}
\label{minor arc bound 2}
Suppose $k \geq 3$, $p \nmid k$ and $s \geq (rk+r)$. Suppose further that either $p \nmid (k-1)$ or $p = m k^b+1$, $m>1$. Let $\delta_0$ be as in the statement of Theorem \ref{Thm minor arc bound 1}.
Then for each $\epsilon > 0$, we have
$$
\int_{\mathfrak{m}} |g(\alpha)|^{2s} \ d \alpha
\ll
q^{(2s - k - \delta_0 + \epsilon)X},
$$
where the implicit constant depends only on $s, q, k, \mathcal{R}'$, and $\epsilon$.
\end{cor}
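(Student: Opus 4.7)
The plan is to simply combine Theorem \ref{Thm minor arc bound 1} with the Vinogradov-type mean value estimate of Theorem \ref{eff cong result}, so the corollary should essentially fall out immediately. First I would apply Theorem \ref{Thm minor arc bound 1} to obtain
$$
\int_{\mathfrak{m}} |g(\alpha)|^{2s} \, d\alpha \ll q^{(\kappa - k - \delta_0)X} J_s(\mathcal{R}', X),
$$
where $\kappa = \sum_{j \in \mathcal{R}'} j$ and $\mathcal{R}' = \{t_1,\dots,t_r\}$ is as defined in \eqref{def R'}.

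Next, I would bound $J_s(\mathcal{R}', X) = J_s(\mathcal{R}, X)$ by appealing to Theorem \ref{eff cong result}. The hypotheses of that theorem are met: Lemma \ref{condition*} shows that $\mathcal{R}$ satisfies Condition*; the maximum of $\mathcal{R}'$ is $\phi = t_r = k \geq 3 \geq 2$; and the standing assumption $s \geq rk + r = r\phi + r$ is precisely what Theorem \ref{eff cong result} requires. Thus, for any $\epsilon > 0$,
$$
J_s(\mathcal{R}', X) \ll \bigl(q^X\bigr)^{2s - \kappa + \epsilon}.
$$

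Multiplying the two estimates, the $\kappa$-dependence cancels and yields
$$
\int_{\mathfrak{m}} |g(\alpha)|^{2s} \, d\alpha \ll q^{(\kappa - k - \delta_0)X} \cdot q^{(2s - \kappa + \epsilon)X} = q^{(2s - k - \delta_0 + \epsilon)X},
$$
which is the claimed bound. The implicit constant inherits dependence on $s$, $q$, $k$, $\epsilon$ from Theorem \ref{eff cong result} and on $q$, $k$ from Theorem \ref{Thm minor arc bound 1}, together with the fixed data of $\mathcal{R}'$ (determined by $k$ and $p$), matching the statement.

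There is essentially no obstacle here; the corollary is a direct consequence of the preceding theorem and the Vinogradov-type estimate. The only point that warrants a brief sentence is the verification that the hypothesis $s \geq r\phi + r$ of Theorem \ref{eff cong result} coincides with the stated hypothesis $s \geq rk + r$, which holds because $\phi = t_r = k$, and that Condition* holds by Lemma \ref{condition*}, so that Theorem \ref{eff cong result} is indeed applicable to $\mathcal{R}$ (equivalently, to $\mathcal{R}'$).
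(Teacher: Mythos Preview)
Your proposal is correct and matches the paper's own proof, which simply states that the corollary is an immediate consequence of applying Theorem~\ref{eff cong result} to Theorem~\ref{Thm minor arc bound 1}. Your explicit verification of the hypotheses (Condition* via Lemma~\ref{condition*}, $\phi = t_r = k$, and $s \geq r\phi + r$) is a welcome elaboration of what the paper leaves implicit.
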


\begin{proof}
This is an immediate consequence of
applying Theorem ~\ref{eff cong result}
to Theorem ~\ref{Thm minor arc bound 1}.
\end{proof}

Before we begin with our proof of Theorem \ref{Thm minor arc bound 1}, we set some notation.
First we define the following exponential sums:
\begin{equation}
f(\boldsymbol{\alpha} ) = \sum_{x \in I_X } e \left( \sum_{j=1}^{r-1} \alpha_{t_j} x^{t_j} + \alpha_k x^k \right),
\end{equation}
and
\begin{equation}
F(\boldsymbol{\beta}, \theta) = \sum_{x \in I_X } e \left( \sum_{j=1}^{r-2} \beta_{t_j} x^{t_j} + \theta x^k \right).
\end{equation}
We will also use the notation $f(\boldsymbol{\alpha}, \theta)$ to mean
$$
f(\boldsymbol{\alpha}, \theta) = f(\alpha_{t_1}, \alpha_{t_2}, ..., \ \alpha_{t_{r-1}}, \theta ).
$$

We also define for $1 \leq j \leq k$,
\begin{equation}
\sigma_{s,j} (\mathbf{x}) = \sum_{i=1}^{s}(x_i^j - x_{s+i}^j).
\end{equation}

Recall
$J_s(\mathcal{R}',X)$ is the number of solutions of the system
$$
u_1^j + ... + u_s^j = v_1^j + ... + v_s^j  \ \ (j \in \mathcal{R}')
$$
with $u_j,v_j \in I_X \ (1 \leq j \leq s)$.
By the orthogonality relation ~(\ref{orthogonality relation}), it follows that
\begin{equation}
\label{def of J}
J_s(\mathcal{R}',X) = \oint |f(\boldsymbol{\alpha})|^{2s} \ d \boldsymbol{\alpha}.
\end{equation}

\begin{proof}[Proof of Theorem \ref{Thm minor arc bound 1}] We begin by expressing the mean value of $g(\alpha)$
in terms of mean value of $F(\boldsymbol{\beta}, \theta)$. Since $ \overline{F(\boldsymbol{\beta}, \theta) } = F(- \boldsymbol{\beta}, - \theta)$,
we see that
\begin{eqnarray}
|F(\boldsymbol{\beta}, \theta)|^{2s} &=&
\prod_{i=1}^s \left( \sum_{x_i, x_{s+i} \in I_X } e \left( \sum_{j=1}^{r-2} \beta_{t_j} (x_i^{t_j} - x_{s+i}^{t_j} )
+ \theta (x_i^k - x_{s+i}^k) \right) \right)
\notag
\\
&=& \sum_{\ord \mathbf{x} < X} e \left( \sum_{j=1}^{r-2} \beta_{t_j} \sigma_{s, t_j}(\mathbf{x})
+ \theta \sigma_{s, k}(\mathbf{x}) \right).
\notag
\end{eqnarray}
Then for $\mathbf{h} = (h_{t_1}, ..., h_{t_{r-2}}) \in \mathbb{F}_q[t]^{r-2}$, we have
\begin{equation}
\label{minor arc thm eqn 1}
\int_{\mathfrak{m}} \oint |F(\boldsymbol{\beta}, \theta)|^{2s} e \left( \sum_{j=1}^{r-2} - \beta_{t_j} h_{t_j} \right)
\ d \boldsymbol{\beta} \ d \theta = \sum_{\ord \mathbf{x} < X }
\delta(\mathbf{x}, \mathbf{h}) \int_{\mathfrak{m}} e(\theta \sigma_{s,k} (\mathbf{x}) ) \ d \theta,
\end{equation}
where
\begin{equation}
\label{minor arc thm eqn 2}
\delta(\mathbf{x}, \mathbf{h}) = \prod_{j=1}^{r-2} \left( \oint e(\beta_{t_j} (\sigma_{s,t_j}(\mathbf{x}) - h_{t_j} ))
\ d \beta_{t_j} \right).
\end{equation}
Thus, the orthogonality relation ~(\ref{orthogonality relation}) gives us
\begin{eqnarray}
\label{minor arc thm eqn 3}
\oint e(\beta_{t_j} (\sigma_{s,t_j}(\mathbf{x}) - h_{t_j} )) \ d \beta_{t_j} =
\left\{
    \begin{array}{ll}
         1, &\mbox{when } \sigma_{s,t_j}(\mathbf{x}) = h_{t_j}, \\
         0, &\mbox{when } \sigma_{s,t_j}(\mathbf{x}) \not = h_{t_j}. \\
    \end{array}
\right.
\end{eqnarray}
When $\ord \mathbf{x} < X$, we have $\ord  \sigma_{s,t_j}(\mathbf{x}) \leq t_j (X-1)$
for $1 \leq j \leq r-2$, and so it follows from ~(\ref{minor arc thm eqn 2}) and ~(\ref{minor arc thm eqn 3}) that
\begin{equation}
\label{minor arc thm eqn 4}
\sum_{\ord  h_{t_1} \leq t_1 (X-1)} ... \ \sum_{\ord h_{t_{r-2}} \leq t_{r-2} (X-1)} \delta(\mathbf{x}, \mathbf{h}) = 1.
\end{equation}
Since
$$
|g(\theta)|^{2s} = \sum_{ \ord  \mathbf{x} < X}  e(\theta \sigma_{s,k} (\mathbf{x}) ),
$$
we obtain by ~(\ref{minor arc thm eqn 1}) and ~(\ref{minor arc thm eqn 4}),
\begin{eqnarray}
&& \sum_{\ord h_{t_1} \leq t_1 (X-1)} ... \ \sum_{\ord  h_{t_{r-2}} \leq t_{r-2}(X-1)}
\int_{\mathfrak{m}} \oint |F(\boldsymbol{\beta}, \theta)|^{2s} e\left( \sum_{j=1}^{r-2} - \beta_{t_j} h_{t_j} \right)
\ d \boldsymbol{\beta} \ d \theta
\notag
\\
&=&
\int_{\mathfrak{m}} \sum_{ \ord  \mathbf{x} < X}
\left( \sum_{\mathbf{h}} \delta(\mathbf{x}, \mathbf{h})  \right)  e(\theta \sigma_{s,k} (\mathbf{x}) ) \ d \theta
\notag
\\
&=& \int_{\mathfrak{m}} |g(\theta)|^{2s} \ d \theta.
\notag
\end{eqnarray}
It therefore follows by the triangle inequality,
\begin{eqnarray}
\label{T1 eqn 11}
\int_{\mathfrak{m}} |g(\alpha)|^{2s} \ d \alpha &\leq& \sum_{\ord h_{t_1} \leq t_1 (X-1)} ... \ \sum_{\ord  h_{t_{r-2}} \leq t_{r-2}(X-1)}
\int_{\mathfrak{m}} \oint |F(\boldsymbol{\beta}, \theta)|^{2s} \ d \boldsymbol{\beta} \ d \theta
\\
\notag
&\leq&
q^{(\kappa - k - t_{r-1})X} \int_{\mathfrak{m}} \oint |F(\boldsymbol{\beta}, \theta)|^{2s} \ d \boldsymbol{\beta} \ d \theta.
\end{eqnarray}

An argument similar to that employed in the last paragraph permits us to relate the mean value of
$F(\boldsymbol{\beta}, \theta)$ to a sum of integrals involving $f(\boldsymbol{\alpha}, \theta)$
as follows
\begin{equation}
\label{eqn 1}
\int_{\mathfrak{m}} \oint |F(\boldsymbol{\beta}, \theta)|^{2s} \ d \boldsymbol{\beta} \ d \theta
=
\sum_{ \ord  h \leq  t_{r-1} (X-1)}
\int_{\mathfrak{m}} \oint |f(\boldsymbol{\alpha}, \theta)|^{2s} e( - \alpha_{t_{r-1}} h) \ d \boldsymbol{\alpha} \ d \theta.
\end{equation}
The advantage of this maneuver is that we can rewrite the integral in the summand with similar expression involving an extra new
variable $y \in I_X$. We then take the average of these integrals over $y \in I_X$ to get a sharper upper bound for the left hand side of ~(\ref{eqn 1}), which ultimately gives us the desired result. This task will be achieved during the course of the rest of the proof, but first we prove ~(\ref{eqn 1}).
For $h \in \mathbb{F}_q[t]$, let
\begin{equation}
\label{def of delta(x,h)}
\widetilde{\delta}(\mathbf{x},h) =  \oint e(\alpha_{t_{r-1}} (\sigma_{s,t_{r-1}}(\mathbf{x}) - h_{} ))
\ d \alpha_{t_{r-1}} .
\end{equation}
We have by the orthogonality relation ~(\ref{orthogonality relation}),
\begin{eqnarray}
\notag
\widetilde{\delta}(\mathbf{x},h) =
\left\{
    \begin{array}{ll}
         1, &\mbox{when } \sigma_{s,t_{r-1}}(\mathbf{x}) = h_{}, \\
         0, &\mbox{when } \sigma_{s,t_{r-1}}(\mathbf{x}) \not = h_{}. \\
    \end{array}
\right.
\end{eqnarray}
Clearly, $\ord  \mathbf{x} < X$ implies $\ord  \sigma_{s,t_{r-1}}(\mathbf{x}) \leq t_{r-1} (X-1)$. Hence we have
\begin{equation}
\label{minor arc thm eqn 5}
\sum_{\ord  h_{} \leq t_{r-1} (X-1)} \widetilde{\delta}(\mathbf{x}, h) = 1.
\end{equation}
Since $ \overline{f(\boldsymbol{\alpha}, \theta)} = f( - \boldsymbol{\alpha}, - \theta)$, we get
\begin{eqnarray}
|f(\boldsymbol{\alpha}, \theta)|^{2s} &=&
\prod_{i=1}^s \left( \sum_{x_i, x_{s+i} \in I_X } e \left( \sum_{j=1}^{r-1} \alpha_{t_j} (x_i^{t_j} - x_{s+i}^{t_j} )
+ \theta (x_i^k - x_{s+i}^k) \right) \right)
\notag
\\
&=& \sum_{\ord \mathbf{x} < X} e \left( \sum_{j=1}^{r-1} \alpha_{t_j} \sigma_{s, t_j}(\mathbf{x})
+ \theta \sigma_{s, k}(\mathbf{x}) \right).
\notag
\end{eqnarray}
Thus, it follows by ~(\ref{def of delta(x,h)}) that
\begin{equation}
\label{minor arc thm eqn 6}
\int_{\mathfrak{m}} \oint |f(\boldsymbol{\alpha}, \theta)|^{2s} e \left( - \alpha_{t_{r-1}} h_{} \right)
\ d \boldsymbol{\alpha} \ d \theta = \sum_{\ord  \mathbf{x} < X }
\widetilde{\delta}(\mathbf{x}, h) \int_{\mathfrak{m}} \oint e\left( \sum_{j=1}^{r-2}  \beta_{t_j} \sigma_{s, t_j}(\mathbf{x}) + \theta \sigma_{s,k}(\mathbf{x})  \right) \ d \boldsymbol{\beta} \ d \theta.
\end{equation}

Therefore, we obtain by ~(\ref{minor arc thm eqn 5}) and  ~(\ref{minor arc thm eqn 6}),
\begin{eqnarray}
&&\sum_{\ord  h_{} \leq t_{r-1} (X-1)} \int_{\mathfrak{m}} \oint |f(\boldsymbol{\alpha}, \theta)|^{2s} e \left( - \alpha_{t_{r-1}} h_{} \right)
\ d \boldsymbol{\alpha} \ d \theta
\\
\notag
&=& \sum_{\ord  \mathbf{x} < X } \
\sum_{\ord  h_{} \leq t_{r-1} (X-1)} \widetilde{\delta}(\mathbf{x}, h) \int_{\mathfrak{m}} \oint e\left( \sum_{j=1}^{r-2} \beta_{t_j} \sigma_{s, t_j}(\mathbf{x}) + \theta \sigma_{s,k}(\mathbf{x})  \right) \ d \boldsymbol{\beta} \ d \theta
\\
\notag
&=&
\int_{\mathfrak{m}} \oint \sum_{\ord  \mathbf{x} < X } e\left( \sum_{j=1}^{r-2} \beta_{t_j} \sigma_{s, t_j}(\mathbf{x}) + \theta \sigma_{s,k}(\mathbf{x})  \right) \ d \boldsymbol{\beta} \ d \theta
\\
\notag
&=&
\int_{\mathfrak{m}} \oint |F(\boldsymbol{\beta}, \theta)|^{2s} \ d \boldsymbol{\beta} \ d \theta,
\end{eqnarray}
which is exactly the equation ~(\ref{eqn 1}) we aimed to prove.

Given $y \in I_X$, observe that $I_X$ is invariant under translation by $y$, or in other words
$$
I_X = \{ x : x \in \mathbb{F}_q[t], \ord x < X  \} = \{ x + y :x \in \mathbb{F}_q[t], \ord x < X  \}.
$$
Let
$$
\lambda(z; \boldsymbol{\alpha} ) = \sum_{j=1}^{r-1} \alpha_{t_j} z^{t_j} + \alpha_k z^k.
$$
By the above observation, shifting the variable of summation in $f(\boldsymbol{\alpha})$ by $y$ gives us
\begin{equation}
\label{shifted exp sum}
f(\boldsymbol{\alpha}) = 
\sum_{x \in I_X} e \left( \lambda(x; \boldsymbol{\alpha} ) \right)
=\sum_{x \in I_X} e \left( \lambda(x - y; \boldsymbol{\alpha} ) \right).
\end{equation}

Define $\Delta (\theta, h, y)$ as follows:
$$
\Delta (\theta, h, y) =  e(\theta \sigma_{s,k}(\mathbf{x} -y)),
$$
when the $2s$-tuple $\mathbf{x}$ satisfies
\begin{equation}
\label{eqn 3}
\sum_{i=1}^s ( (x_i - y)^{t_j} - (x_{s+i} - y)^{t_j} ) = 0 \phantom{12} (1 \leq j \leq r-2)
\end{equation}
and
\begin{equation}
\label{eqn 3'}
\sum_{i=1}^s ( (x_i - y)^{t_{r-1}} - (x_{s+i} - y)^{t_{r-1}} ) = h.
\end{equation}
Otherwise, we let $\Delta (\theta, h, y) = 0$.
Substituting the expression ~(\ref{shifted exp sum}) for $f(\boldsymbol{\alpha}, \theta)$,
we find by the orthogonality relation ~(\ref{orthogonality relation}),
\begin{equation}
\label{eqn 4}
\oint
|f(\boldsymbol{\alpha}, \theta)|^{2s} e(- \alpha_{t_{r-1}} h)
 \ d \boldsymbol{\alpha}
= \sum_{\ord  \mathbf{x} < X} \Delta (\theta, h, y).
\end{equation}
We now simplify the function $\Delta (\theta, h, y)$ and obtain another expression for
the left hand side of ~(\ref{eqn 4}).
First, we prove that the $2s$-tuple $\mathbf{x}$ satisfies ~(\ref{eqn 3}) and ~(\ref{eqn 3'})
if and only if $\mathbf{x}$ satisfies
\begin{equation}
\label{minor arc thm eqn 8}
\sum_{i=1}^s ( x_i^{t_j} - x_{s+i}^{t_j} ) = 0 \phantom{12} (1 \leq j \leq r-2)
\end{equation}
and
\begin{equation}
\label{minor arc thm eqn 7}
\sum_{i=1}^s ( x_i^{t_{r-1}} - x_{s+i}^{t_{r-1}} ) = h.
\end{equation}

Suppose $\mathbf{x}$ satisfies ~(\ref{eqn 3}) and ~(\ref{eqn 3'}).
Since $\mathbb{F}_q$ has characteristic $p$, we have $(x-y)^p = x^p - y^p$.
Recall $t_{r-1} = j_0$. Thus, we can prove by induction and the definition of $\mathcal{R}'$ that 
~(\ref{eqn 3}) implies
\begin{equation}
\label{eqn 3''}
\sum_{i=1}^s ( (x_i - y)^{j} - (x_{s+i} - y)^{j} ) = 0 \phantom{12} (1 \leq j < t_{r-1}).
\end{equation}
Note we can verify that $t_{r-1}>1$ for the cases we consider here.
By applying the binomial theorem, we obtain that whenever
a $2s$-tuple $\mathbf{x}$ satisfies
~(\ref{eqn 3'}) and the system ~(\ref{eqn 3''}), then $\mathbf{x}$ satisfies
\begin{equation}
\label{eqn 5}
\sum_{i=1}^s ( x_i^j - x_{s+i}^j ) = 0 \phantom{12} (1 \leq j < t_{r-1})
\end{equation}
and ~(\ref{minor arc thm eqn 7}).
Clearly the system ~(\ref{eqn 5}) implies ~(\ref{minor arc thm eqn 8}).
For the converse direction, since ~(\ref{minor arc thm eqn 8}) implies ~(\ref{eqn 5}), we can obtain the desired
result in a similar manner as in the forward direction.

Suppose $\mathbf{x}$ satisfies ~(\ref{minor arc thm eqn 8}) and ~(\ref{minor arc thm eqn 7}),
and consequently ~(\ref{eqn 5}).
If $p \nmid (k-1)$, then $t_{r-1} = j_0 = k-1$ and we have
\begin{equation}
\label{minor arc thm eqn 10}
\sigma_{s,k}(\mathbf{x} -y) = \sum_{i=1}^s ( (x_i - y)^{k} - (x_{s+i} - y)^{k} )
= \sigma_{s,k}(\mathbf{x}) - chy^{k - t_{r-1}},
\end{equation}
where $c = { k \choose t_{r-1} } \not \equiv 0 \ (\text{mod } p)$.
If $k = m p^b + 1$, then we can deduce from $t_{r-1} = j_0 = (m-1)p^b + 1 > m$, which we note does not hold if $m=1$,  
and ~(\ref{eqn 5}) that
$$
\sum_{i=1}^s x_i^{k-1} - x_{s+i}^{k-1} = \left( \sum_{i=1}^s x_i^{m} -x_{s+i}^{m} \right)^{p^b} = 0.
$$
Therefore, by the binomial theorem, the above equation, and the definition of $j_0$
given in ~(\ref{def j_0}), we also obtain ~(\ref{minor arc thm eqn 10}) when $k$ is of the form $k = m p^b + 1$, $m>1$.
Thus, we can rewrite the definition of $\Delta (\theta, h, y)$ as
$$
\Delta (\theta, h, y) = e( \theta \sigma_{s,k}(\mathbf{x}) - c h y^{k - t_{r-1}} \theta ),
$$
whenever $\mathbf{x}$ satisfies ~(\ref{minor arc thm eqn 8}) and ~(\ref{minor arc thm eqn 7});
otherwise, $\Delta (\theta, h, y)$ is equal to $0$.
Thus, we have
$$
\oint
|f(\boldsymbol{\alpha}, \theta)|^{2s} e(-chy^{k - t_{r-1}} \theta - \alpha_{t_{r-1}} h)
\ d \boldsymbol{\alpha}
=
\sum_{\ord{\mathbf{x}} < X } \Delta (\theta, h, y),
$$
and consequently, it follows from ~(\ref{eqn 4}) that
$$
\oint
|f(\boldsymbol{\alpha}, \theta)|^{2s} e(- \alpha_{t_{r-1}} h) \ d \boldsymbol{\alpha}
=
\oint
|f(\boldsymbol{\alpha}, \theta)|^{2s} e(-chy^{k - t_{r-1}} \theta - \alpha_{t_{r-1}} h) \ d \boldsymbol{\alpha}.
$$
From here, we have by ~(\ref{eqn 1}),
\begin{eqnarray}
\notag
&&\int_{\mathfrak{m}} \oint |F(\boldsymbol{\beta}, \theta)|^{2s} \ d \boldsymbol{\beta} \ d \theta
\\
&=&
\label{eqn 6}
\int_{\mathfrak{m}} \oint |f(\boldsymbol{\alpha}, \theta)|^{2s}
\sum_{ \ord  h \leq t_{r-1} (X-1)} e(-chy^{k - t_{r-1}}\theta - \alpha_{t_{r-1}} h) \ d \boldsymbol{\alpha} \ d \theta.
\end{eqnarray}
Since the left hand side of ~(\ref{eqn 6}) is independent of $y$, we can average the right hand side over $y \in I_X$
to obtain
\begin{eqnarray}
\notag
&&\int_{\mathfrak{m}} \oint |F(\boldsymbol{\beta}, \theta)|^{2s} \ d \boldsymbol{\beta} \ d \theta
\\
\notag
&=&
q^{-X} \sum_{y \in I_X}
\int_{\mathfrak{m}} \oint |f(\boldsymbol{\alpha}, \theta)|^{2s}
\sum_{ \ord h \leq t_{r-1} (X-1)} e(-chy^{k - t_{r-1}} \theta - \alpha_{t_{r-1}} h) \ d \boldsymbol{\alpha} \ d \theta
\\
&=&
\label{minor arc thm eqn 9}
\int_{\mathfrak{m}} \oint |f(\boldsymbol{\alpha}, \theta)|^{2s} \psi(\theta, \alpha_{t_{r-1}}) \ d \boldsymbol{\alpha} \ d \theta.
\end{eqnarray}
In the last equality displayed above, we invoked ~(\ref{def psi}), the definition of $\psi(\theta, \alpha)$.
We apply the appropriate lemma depending on $k$ from Section \ref{technical section}, namely Lemmas \ref{lemma case 1},
\ref{lem 7} and \ref{lem 8},
to $\psi(\theta, \alpha)$
and obtain an upper bound for the right hand side of ~(\ref{minor arc thm eqn 9}).
We then use the resulting estimate and ~(\ref{def of J}) to bound ~(\ref{T1 eqn 11}), from which we obtain
\begin{eqnarray}
\int_{\mathfrak{m}} |g(\alpha)|^{2s} \ d \alpha
\ll
q^{(\kappa - k - t_{r-1})X} q^{(j_0 - \delta)X} J_s(\mathcal{R}', X) = q^{(\kappa - k - \delta)X} J_s(\mathcal{R}',X),
\end{eqnarray}
for suitable $\delta>0$. 
\end{proof}

\subsection{Case $k = p^b+1$}
\label{minor arc case 2}

Recall from above that if $k = p^b + 1$, then $r = (1 - 1/p)(k-p^b) + (1 + 1/p) = 2.$
We obtain the following minor arc bound when $k = p^b + 1$.
\begin{thm}
\label{Thm minor arc bound 1'}
Suppose $k \geq 3$ and $k = p^b + 1$. Let $\kappa = 1 + k$, $\mathcal{R}' = \{ 1, k\}$, and 
\begin{equation}
\notag
\delta_0 = \frac{1}{16(p^b + 2)}.
\end{equation}
Then we have
$$
\int_{\mathfrak{m}} |g(\alpha)|^{2s+ 1} \ d \alpha
\ll
q^{(1 + \kappa - k - \delta_0)X} J_s(\mathcal{R}',X),
$$
where the implicit constant depends only on $q$ and $k$.
\end{thm}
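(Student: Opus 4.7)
The plan is to combine a Weyl-type pointwise bound on $|g(\alpha)|$ on the minor arcs with a mean value bound on $\int |g|^{2s}$. The crucial feature of $k = p^b + 1$ is that a single Weyl differencing, followed by a second application of the Frobenius identity, already gives sharp pointwise control --- this is the ``more basic approach'' alluded to by the author.

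First I would establish a pointwise identity. Starting from
$$|g(\alpha)|^2 = \sum_{h, y \in I_X} e(\alpha((y+h)^k - y^k)),$$
the Frobenius expansion $(y+h)^{p^b+1} - y^{p^b+1} = hy^{p^b} + h^{p^b}y + h^{p^b+1}$ gives
$$|g(\alpha)|^2 = \sum_{h \in I_X} e(\alpha h^k)\, T_h(\alpha), \qquad T_h(\alpha) := \sum_{y \in I_X} e(\alpha h y^{p^b} + \alpha h^{p^b} y).$$
Applying Frobenius a second time to $|T_h(\alpha)|^2$, the substitution $y_1 = y_2 + u$ together with $(y_2+u)^{p^b} - y_2^{p^b} = u^{p^b}$ makes the inner sum over $y_2$ trivial, yielding $|T_h(\alpha)|^2 = q^X T_h(\alpha)$. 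Consequently $T_h(\alpha) \in \{0, q^X\}$, and setting $S(\alpha) := \{h \in I_X : T_h(\alpha) = q^X\}$ one obtains the key estimate $|g(\alpha)|^2 \leq q^X |S(\alpha)|$.

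The main technical step is to show $|S(\alpha)| \ll q^{X(1 - 2\delta_0)}$ on the minor arcs, yielding $\sup_{\mathfrak{m}_k} |g(\alpha)| \ll q^{X(1-\delta_0)}$. Using the trace identity $\mathrm{tr}(y^{p^b} A) = \mathrm{tr}(y A^{p^{-b}})$ and factoring the character sum over the $\mathbb{F}_q$-coefficients of $y$, membership $h \in S(\alpha)$ reduces to a system of $X$ conditions relating the Laurent coefficients of $\alpha h$ and $\alpha h^{p^b}$. For $\alpha \in \mathfrak{m}_k$, Dirichlet's approximation $\alpha = a/g + \beta$ with $X < \ord g \leq (k-1)X$ forces these conditions to be sufficiently nondegenerate. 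I would partition $\mathfrak{m}_k$ according to whether $\ord g$ is large or small and apply the large sieve inequality of Theorem \ref{large sieve} separately in each case, in the spirit of Lemma \ref{lem 8}; the constant $\delta_0 = 1/(16(p^b+2))$ should emerge from optimizing a balance between the two cases. This is where I expect the main difficulty to lie.

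Finally, I would combine this pointwise bound with the mean value estimate
$$\int_{\mathbb{T}} |g(\theta)|^{2s}\, d\theta \leq q^X J_s(\mathcal{R}', X),$$
which follows from the identity \eqref{eqn 1} in the case $r = 2$, $t_{r-1} = 1$: since $|f(\alpha_1, \theta)|^{2s} \geq 0$, the triangle inequality gives $\bigl|\oint |f|^{2s} e(-\alpha_1 h)\, d\alpha_1\bigr| \leq \oint |f|^{2s}\, d\alpha_1$, and summation over the $q^X$ values of $h \in I_X$ followed by integration over $\theta$ yields the bound. Putting everything together,
$$\int_{\mathfrak{m}} |g|^{2s+1}\, d\alpha \leq \sup_{\mathfrak{m}} |g| \cdot \int_{\mathbb{T}} |g|^{2s}\, d\alpha \ll q^{(1-\delta_0)X} \cdot q^X J_s(\mathcal{R}', X),$$
which matches the claim since $1 + \kappa - k = 2$.
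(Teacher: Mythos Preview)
Your overall architecture---pull out one factor of $|g|$, bound it pointwise on $\mathfrak{m}$, and bound the remaining $\int_{\mathfrak{m}}|g|^{2s}$ by $q^X J_s(\mathcal{R}',X)$ via \eqref{eqn 1} with $r=2$---is exactly what the paper does. Your derivation of the mean-value half is correct and matches the paper's argument leading to \eqref{minor arc estimate for m=1}.

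The gap is in the pointwise bound. The paper does \emph{not} carry out any hands-on Weyl differencing here; it simply invokes Theorem~\ref{thm LL 1} (Theorem~12 of \cite{LL}) with $\mathcal{K}=\{k\}$, computes $\mathcal{S(K)}'=\{1,k\}$, $r_0=2$, $\phi=k$, and reads off the admissible constant $c=1/(8(r_0\phi+r_0))=1/(16(k+1))=\delta_0$. A short contraposition then shows that $|g(\theta)|\geq q^{(1-c)X}$ would force $\theta\in\mathfrak{M}_k$, giving \eqref{sup over minor}. So the specific value $\delta_0=1/(16(p^b+2))$ is not the output of any optimization of your own; it is inherited verbatim from the constants in \cite{LL}, whose proof in turn rests on Vinogradov-type mean values, not on the large sieve.

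Your $T_h$-analysis is correct and attractive (indeed $y\mapsto e(\alpha h y^{p^b}+\alpha h^{p^b}y)$ is an additive character on $I_X$, so $T_h\in\{0,q^X\}$ and $S(\alpha)$ is even an $\mathbb{F}_p$-subspace). But you do not actually bound $|S(\alpha)|$; you only say you ``would'' partition and apply the large sieve ``in the spirit of Lemma~\ref{lem 8}'' and that the constant ``should emerge.'' That is the whole content of the theorem, and it is not clear your route works: Lemma~\ref{lem 8} uses the large sieve to bound a weighted $\ell^2$-sum of linear exponential sums over well-spaced points, whereas here you need to bound the dimension of the subspace $S(\alpha)$, which is a genuinely different task and is essentially equivalent to proving a Weyl inequality for $g$ from scratch. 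Absent an execution of this step, the proposal is incomplete; to close it you should either supply the missing argument for $|S(\alpha)|$ (with the correct constant) or, as the paper does, appeal directly to Theorem~\ref{thm LL 1}.
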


By applying Theorem ~\ref{eff cong result} to Theorem \ref{Thm minor arc bound 1'},
we also obtain the following corollary.
\begin{cor}
\label{minor arc bound 2'}
Suppose $k \geq 3$, $k = p^b + 1$, and $s \geq (2k+2)$. Let
$$
\delta_0 = \frac{1}{16(p^b + 2)}.
$$
Then for each $\epsilon > 0$, we have
$$
\int_{\mathfrak{m}} |g(\alpha)|^{2s+ 1} \ d \alpha
\ll
q^{(2s + 1 - k - \delta_0 + \epsilon)X},
$$
where the implicit constant depends only on $s, q, k$, and $\epsilon$.
\end{cor}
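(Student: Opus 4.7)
The plan is to derive the corollary as an immediate consequence of substituting the Vinogradov-type mean value estimate of Theorem \ref{eff cong result} into the bound furnished by Theorem \ref{Thm minor arc bound 1'}. First I would verify the hypotheses of Theorem \ref{eff cong result} for the set $\mathcal{R}' = \{1,k\}$ associated to the case $k = p^b + 1$: its cardinality is $r = 2$, its maximum element is $\phi = k$, and the sum of its elements is $\kappa = 1 + k$. Since $k \geq 3$ forces $\phi \geq 2$, the inequality $s \geq r\phi + r = 2k + 2$ assumed in the statement is precisely the threshold required. The associated set $\mathcal{R}$ satisfies Condition* by Lemma \ref{condition*} (via the second branch of its proof, as $k = p^b+1$ is of the form $mp^b + 1$ with $m = 1$), so Theorem \ref{eff cong result} applies directly.

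The second step is the substitution. Theorem \ref{eff cong result} yields, for every $\epsilon > 0$,
\[
J_s(\mathcal{R}', X) \ll q^{(2s - \kappa + \epsilon)X} = q^{(2s - 1 - k + \epsilon)X},
\]
where the implicit constant depends on $s, r, \phi, \kappa, q$, and $\epsilon$. Inserting this into the conclusion of Theorem \ref{Thm minor arc bound 1'} produces
\[
\int_{\mathfrak{m}} |g(\alpha)|^{2s+1} \, d\alpha \ll q^{(1 + \kappa - k - \delta_0)X} \cdot q^{(2s - 1 - k + \epsilon)X} = q^{(2s + 1 - k - \delta_0 + \epsilon)X},
\]
which is exactly the desired bound.

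Since the argument is a direct composition of two results already established, there is essentially no obstacle. The only point deserving attention is the bookkeeping of implicit constants: these depend on $s, q, k$, and $\epsilon$, with any dependence on $\mathcal{R}'$ absorbed into the dependence on $k$ because $\mathcal{R}'$ is determined by $k$ in the case $k = p^b + 1$. These dependencies match those declared in the statement of the corollary, so no further adjustment is required.
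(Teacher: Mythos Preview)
Your proposal is correct and follows precisely the approach indicated in the paper, which derives the corollary by applying Theorem \ref{eff cong result} to Theorem \ref{Thm minor arc bound 1'}. Your verification of the hypotheses ($r=2$, $\phi=k$, $\kappa=k+1$, Condition* via Lemma \ref{condition*}) and the arithmetic in the substitution are all accurate.
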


We introduce some notation before we get into the proof of Theorem \ref{Thm minor arc bound 1'}.
Given $j,j' \in \mathbb{Z}^+$, we write $j \preceq_p j'$ if $p \nmid {j' \choose j}$.
By Lucas' Theorem, this happens precisely when all the digits of $j$ in base $p$
are less than or equal to the corresponding digits of $r$. From this characterization,
it is easy to see that the relation $\preceq_p$ defines a partial order on $\mathbb{Z}^+$.
If $j \preceq_p j'$, then we necessarily have $j \leq j'$. Let $\mathcal{K} \subseteq \mathbb{Z}^+$.
We say an element $k \in \mathcal{K}$  is \textit{maximal} if it is maximal with respect to
$\preceq_p$, that is, for any $j \in \mathcal{K}$, either $j \preceq_p k$
or $j$ and $k$ are not comparable. Following the notation of \cite{LL}, we define
the \textit{shadow} of $\mathcal{K}$, $\mathcal{S(K)}$, to be
$$
\mathcal{S(K)} = \left\{  j \in \mathbb{Z}^+ : 
j \preceq_p j' \text{ for some } j' \in \mathcal{K}  \right\}.
$$
We also define
$$
\mathcal{K}^* = \left\{ k \in \mathcal{K} : p \nmid k \text{ and }
p^vk \not \in \mathcal{S(K)} \text{ for any } v \in \mathbb{Z}^+   \right\}.
$$

We invoke the following result from \cite{LL}. The theorem allows us to estimate
certain coefficients of a polynomial $h(u)$ by an element in $\mathbb{K}$ when the exponential sum
of $h(u)$ is sufficiently large. We use the result to bound exponential sums over the minor arcs.

\begin{thm}[Theorem 12, \cite{LL}]   
\label{thm LL 1}
Let $\mathcal{K} \subseteq \mathbb{Z}^+$ and $h(u) = \sum_{j \in \mathcal{K} \cup \{ 0 \}} \alpha_j u^j \in \mathbb{K}_{\infty}[u]$, 
where $\alpha_j \not = 0 \ (j \in \mathcal{K})$.
Suppose that $k \in \mathcal{K}^*$ is maximal in $\mathcal{K}$. Then there exist
constants $c, C > 0$, depending only on $\mathcal{K}$ and $q$, such that the following holds:
suppose that for some $0 < \eta \leq c X$, we have
$$
\Big{|} \sum_{x \in I_X}  e(h(x)) \Big{|} \geq q^{X- \eta}.
$$
Then for any $\epsilon > 0$ and $X$ sufficiently large in terms of $\mathcal{K}$, $\epsilon$ and $q$,
there exist $a, g \in \mathbb{F}_q[t]$ such that
$$
\ord (g \alpha_k - a) < -k X + \epsilon X + C \eta \ \ \text{  and  } \ \ \ord g \leq \epsilon X + C \eta.
$$
\end{thm}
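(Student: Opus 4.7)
The plan is to combine three ingredients: (i) a pointwise bound on $|g(\alpha)|$ for $\alpha \in \mathfrak{m}$ derived from Theorem \ref{thm LL 1} applied with $\mathcal{K} = \{k\}$; (ii) the trivial extraction $\int_{\mathfrak{m}}|g|^{2s+1}\,d\alpha \leq \bigl(\sup_{\mathfrak{m}}|g|\bigr)\int_{\mathbb{T}}|g|^{2s}\,d\alpha$; and (iii) an orthogonality-plus-triangle-inequality estimate giving $\int_{\mathbb{T}}|g|^{2s}\,d\alpha \leq q^X J_s(\mathcal{R}',X)$. Together these produce $q^{(2-\delta_0)X}J_s(\mathcal{R}',X)$, which equals $q^{(1+\kappa-k-\delta_0)X}J_s(\mathcal{R}',X)$ since $\kappa = 1+k$.

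For (i), take $h(u) = \alpha u^k$ and $\mathcal{K} = \{k\}$. Since $k = p^b+1$ is coprime to $p$ and the shadow $\mathcal{S}(\mathcal{K}) \subseteq \{1,\ldots,k\}$ contains no $p^v k$ for $v\geq 1$, we have $k\in\mathcal{K}^*$, and $k$ is trivially maximal in $\mathcal{K}$. Theorem \ref{thm LL 1} therefore applies. If some $\alpha\in\mathfrak{m}$ satisfied $|g(\alpha)| \geq q^{(1-\delta_0)X}$, then with $\eta = \delta_0 X$ the theorem produces $a, g_0\in\mathbb{F}_q[t]$ with $\ord g_0 \leq \epsilon X + C\delta_0 X$ and $\ord(g_0\alpha - a) < -kX + \epsilon X + C\delta_0 X$, where $C$ depends only on $q$ and $k$; after dividing out $(a,g_0)$ we may assume $(a,g_0)=1$. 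Choosing $\epsilon$ and $\delta_0$ so that $\epsilon + C\delta_0 < 1$ and $\delta_0 \le c$ (with $c$ the constant from Theorem \ref{thm LL 1}) forces $\ord g_0 \leq X$ and $\ord(g_0\alpha - a) < -(k-1)X = -R_k$, placing $\alpha \in \mathfrak{M}_k(g_0,a) \subseteq \mathfrak{M}_k$, a contradiction. Hence $\sup_{\mathfrak{m}}|g(\alpha)| < q^{(1-\delta_0)X}$. The explicit value $\delta_0 = 1/(16(p^b+2))$ arises from tracking the constant $C$ through the proof of \cite[Theorem 12]{LL} in the singleton case $\mathcal{K}=\{k\}$ and making a convenient choice of $\epsilon$.

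For (iii), set $f(\beta,\alpha) = \sum_{x\in I_X}e(\beta x + \alpha x^k)$, so $\oint\oint|f|^{2s}\,d\beta\,d\alpha = J_s(\mathcal{R}',X)$. For each $d \in I_X$ let $N(d) = \#\{\mathbf{x}\in I_X^{2s} : \sigma_{s,1}(\mathbf{x}) = d,\ \sigma_{s,k}(\mathbf{x}) = 0\}$. By orthogonality,
\[
N(d) = \oint\oint |f(\beta,\alpha)|^{2s}\, e(-\beta d)\,d\beta\,d\alpha,
\]
so the triangle inequality gives $N(d)\leq J_s(\mathcal{R}',X)$. Since $I_X$ is closed under addition and negation, $\sigma_{s,1}(\mathbf{x})\in I_X$ whenever $\mathbf{x}\in I_X^{2s}$, and hence
\[
\int_{\mathbb{T}}|g(\alpha)|^{2s}\,d\alpha = J_s(\{k\},X) = \sum_{d\in I_X} N(d) \leq q^X\, J_s(\mathcal{R}',X).
\]
Combining (i)--(iii) then yields the claimed bound.

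The main obstacle is pinning down the explicit constant $\delta_0 = 1/(16(p^b+2))$, which requires extracting the specific form of $C$ from the proof of \cite[Theorem 12]{LL} in the case $\mathcal{K} = \{k\}$, $k = p^b+1$, and then optimizing in the auxiliary parameter $\epsilon$. The remaining steps are clean applications of orthogonality, the triangle inequality, and the definition of the major arcs; this is the sense in which the argument is ``more basic'' than the one used in Section \ref{minor arc case 1}.
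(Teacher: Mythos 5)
The statement you were asked to prove is Theorem \ref{thm LL 1}, which is an external result quoted verbatim from L\^{e} and Liu (\cite[Theorem 12]{LL}); the paper gives no proof of it beyond the citation. Your proposal does not prove this statement either: its first ingredient is ``a pointwise bound on $|g(\alpha)|$ derived from Theorem \ref{thm LL 1} applied with $\mathcal{K}=\{k\}$,'' so the argument is circular with respect to the assigned statement, and what it actually establishes is a different result, namely the minor arc bound of Theorem \ref{Thm minor arc bound 1'}. A genuine proof of Theorem \ref{thm LL 1} would require reconstructing the equidistribution/Weyl-type machinery of \cite{LL} (the role of the shadow $\mathcal{S(K)}$ and of $\mathcal{K}^*$, Vinogradov-type mean value input, and the rational approximation step producing $a$ and $g$), none of which appears in your write-up. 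Moreover, the one piece of quantitative information this paper does extract from the proof in \cite{LL} --- that one may take $c = 1/(8(r_0\phi + r_0))$ and $C = 2(r_0\phi + r_0)$ with $r_0 = \#\,\mathcal{S(K)}'$ and $\phi = \max_{i \in \mathcal{S(K)}'} i$ --- is exactly the point you defer (``tracking the constant $C$ through the proof of [LL]''), so even the provenance of $\delta_0 = 1/(16(p^b+2))$ is left unestablished.

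Read instead as a blind reconstruction of the proof of Theorem \ref{Thm minor arc bound 1'}, your argument is essentially the paper's: step (i) is the paper's contradiction argument yielding $\sup_{\theta \in \mathfrak{m}} |g(\theta)| < q^{X - \delta_0 X}$ (the paper takes $\eta = cX$, $\epsilon = 1/2$, so that $C\eta = X/4$, and checks membership in $\mathfrak{M}_k$ after dividing out the greatest common divisor); step (ii) is identical; and your step (iii), bounding $\int_{\mathbb{T}} |g(\alpha)|^{2s}\, d\alpha \leq q^X J_s(\mathcal{R}',X)$ by classifying the solutions of $\sigma_{s,k}(\mathbf{x}) = 0$ according to the value $d = \sigma_{s,1}(\mathbf{x}) \in I_X$ and applying orthogonality plus the triangle inequality, is a correct and slightly more direct rendering of the paper's route through the identity $F(\boldsymbol{\beta},\theta) = g(\theta)$ and equation (\ref{eqn 1}). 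So the content you do supply is sound, but it is aimed at the wrong target and leaves the imported theorem, together with its explicit constants, unproved.
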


\begin{proof}[Proof of Theorem \ref{Thm minor arc bound 1'}]
We  bound $\sup_{\theta \in \mathfrak{m}} | g(\theta) |$ using Theorem \ref{thm LL 1}.
For $g(\theta)$ with $k = p^b + 1$, we have $\mathcal{K} = \{ k \}$, and
thus
$$
\mathcal{S(K)} = \{ k, p^b, 1 \},
$$
and
$$
\mathcal{K}^* = \{ k \}.
$$
Clearly, $k$ is maximal in $\mathcal{K}$.
We also have
\begin{eqnarray}
\mathcal{S(K)}' &:=& \{ i \in \mathbb{N} : p \nmid i \text{ and } p^v i \in \mathcal{S(K)} \text{ for some } v \in \mathbb{N} \cup \{0\} \}
\notag
\\
&=& \{ k, 1 \}.
\notag
\end{eqnarray}

It is given at the end of the proof of \cite[Theorem 12]{LL} that we may take $c = 1/(8(r_0 \phi + r_0))$ and $C = 2 (r_0 \phi + r_0)$, where
$r_0 = \# \ \mathcal{S(K)}'$ and $\phi = \max_{i \in \mathcal{S(K)}' } i$.
Therefore, we can apply Theorem \ref{thm LL 1} with
$$
c = \frac{1}{8(2k+2)} = \frac{1}{16(p^b+2)} \ \ \text{  and  } \ \ C = 2 (2k+2).
$$
Take any $\theta \in \mathfrak{m}$. We set $\epsilon = 1/2$. Suppose for some $X$ sufficiently large,
with respect to $\mathcal{K}$ and $q$, we have
$$
|g(\theta)| \geq q^{X - c X}.
$$
Then, by Theorem \ref{thm LL 1}, there exist $\tilde{g}, \tilde{a} \in \mathbb{F}_q[t]$ such that
$$
\ord (\tilde{g} \theta - \tilde{a}) < -k X + \epsilon X + \frac{1}{4} X \ \ \text{  and  } \ \ \ord \tilde{g} \leq \epsilon X + \frac{1}{4} X.
$$
Let $(\tilde{g}, \tilde{a}) = \ell$, and denote $\tilde{g} = \ell g_0 $ and $\tilde{a} = \ell a_0$.
We obtain from above inequalities,
$$
\ord (\theta - a_0/g_0) = \ord (\theta - \tilde{a}/\tilde{g}) < -k X + \epsilon X + \frac{1}{4} X - \ord \tilde{g} \leq - (k-1) X - \ord {g_0}
$$
and
$$
\ord g_0 \leq \ord \tilde{g} \leq \epsilon X + \frac{1}{4} X < X.
$$
By the definition of major arcs ~(\ref{def major}), this implies that $\theta \in \mathfrak{M}_k$, which is a contradiction.
Therefore, we must have
$$
|g(\theta)| < q^{X - c X}
$$
for all $X$ sufficiently large with respect to $\mathcal{K}$ and $q$.
Since the result is independent of the choice of $\theta \in \mathfrak{m}$,
it follows that
\begin{equation}
\label{sup over minor}
\sup_{\theta \in \mathfrak{m}_k} |g (\theta)| < q^{X - \frac{1}{16(p^b+2)} X }.
\end{equation}

When $k = p^b + 1$, we have $r=2$; therefore, we have $F( \boldsymbol{ \beta}, \theta) = g(\theta)$.
Thus, we obtain by ~(\ref{eqn 1}) and the triangle inequality that
\begin{eqnarray}
&& \int_{\mathfrak{m}} |g(\theta)|^{2s+1} \ d \theta
\label{minor arc estimate for m=1}
\\
&\leq&
\sup_{\theta \in \mathfrak{m}} | g(\theta) | \cdot \int_{\mathfrak{m}} |g(\theta)|^{2s} \ d \theta
\notag
\\
&=&
\sup_{\theta \in \mathfrak{m}} | g(\theta) |
\cdot \sum_{h \in I_X}
\int_{\mathfrak{m}} \oint |f(\alpha, \theta)|^{2s} e\left( -\alpha h \right)
\ d \alpha \ d \theta
\notag
\\
&\leq&
\sup_{\theta \in \mathfrak{m}} | g(\theta) |
\cdot q^X \cdot \oint \oint |f(\alpha, \theta)|^{2s}
\ d \alpha \ d \theta
\notag
\\
&=&
\sup_{\theta \in \mathfrak{m}} | g(\theta) |
\cdot q^X \cdot
J_{s}(\mathcal{R}', X).
\notag
\end{eqnarray}
Consequently, substituting ~(\ref{sup over minor}) into the above inequality ~(\ref{minor arc estimate for m=1}) gives us
$$
\int_{\mathfrak{m}} |g(\theta)|^{2s+1} \ d \theta \ll q^{2 X - \frac{1}{16(p^b+2)} X } J_{s}(\mathcal{R}', X).
$$
\end{proof}

\section{Weyl Differencing}
\label{section weyl}
Let $w_0(u)$ be a polynomial in $\mathbb{F}_q[t][u]$. 
Let $z_1, ..., z_h$ be indeterminates. We define the differencing operator $\Delta_{z_1}$ by
$$
\Delta_{z_1} (w_0)(u) = w_0(u + z) - w_0(u) \in \mathbb{F}_q[t][u, z_1],
$$
where we denote $\Delta_{z_1} (w_0) = \Delta_{z_1} (w_0)(u)$. We also define recursively
$$
\Delta_{z_h} ... \Delta_{z_1}(w_0) (u)= \Delta_{z_{h-1}} ... \Delta_{z_1}(w_0) (u + z_h) - \Delta_{z_{h-1}} ... \Delta_{z_1}(w_0) (u)
\in \mathbb{F}_q[t][u, z_1, ..., z_h],
$$
and we denote $\Delta_{z_h} ... \Delta_{z_1}(w_0) = \Delta_{z_h} ... \Delta_{z_1}(w_0)(u).$

While in characteristic zero the above differencing process, known as Weyl differencing,
decreases the degree (in $u$) of the polynomial by one, the situation in positive
characteristic is more subtle. With application of Hua's lemma (Proposition \ref{Hua's lemma}) in mind, it will be useful to know
how many times one can apply Weyl differencing to $u^k$ in $\mathbb{F}_q[t][u]$ before it becomes identically zero.
Note that given an indeterminate $z$ and a monomial $u^{\ell}$, we have $\Delta_{z} (u^{\ell}) = 0$ if and only if ${\ell}=0$. To see this, suppose we have
${\ell} \geq 1$ and
$$
0 = \Delta_{z} (u^{\ell}) = (u + z)^{\ell} - u^{\ell} = \sum_{j=0}^{{\ell}-1} {{\ell} \choose j} u^j z^{{\ell}-j}.
$$
Then, in particular it must be that ${{\ell} \choose 0} = 1 \equiv 0 \ (\text{mod } p)$,
which is a contradiction. Therefore, we have ${\ell}=0$. The converse direction is trivial.
The following lemma is a slight modification of
\cite[Lemma 8.1]{LW} and we omit the proof here.

\begin{lem}
\label{counting weyl}
Let $k = c_v p^v + ... + c_0$ with $0 \leq c_i < p \ (0 \leq i \leq v)$, and
let $h_0 = h_0(k) = c_v + ... + c_0$. Let $z_1, ..., z_{h_0 + 1}$ be indeterminates.
Then, we have
$$
0 \not = \Delta_{z_{h_0} } ...  \Delta_{z_1} u^k \in \mathbb{F}_q[t] [u, z_1, ..., z_{h_0}]
$$
and
$$
0 = \Delta_{z_{h_0 + 1} } ...  \Delta_{z_1} u^k  \in \mathbb{F}_q[t] [u, z_1, ..., z_{h_0 + 1}].
$$
\end{lem}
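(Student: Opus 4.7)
The plan is to derive and then analyze an explicit formula for $\Delta_{z_h} \cdots \Delta_{z_1}(u^k)$. A routine induction on $h$ yields
$$
\Delta_{z_h} \cdots \Delta_{z_1}(u^k) = \sum_{S \subseteq \{1, \ldots, h\}} (-1)^{h-|S|} \Bigl(u + \sum_{i \in S} z_i\Bigr)^k.
$$
Expanding each summand by the multinomial theorem and switching the order of summation, a standard inclusion-exclusion on $S$ shows that a monomial $u^{a_0} z_1^{a_1} \cdots z_h^{a_h}$ survives the alternating sum precisely when $a_i \geq 1$ for every $i \in \{1, \ldots, h\}$ (the inner sign sum equals $0$ otherwise). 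This gives
$$
\Delta_{z_h} \cdots \Delta_{z_1}(u^k) = \sum_{\substack{a_0 + a_1 + \cdots + a_h = k \\ a_i \geq 1 \ (1 \leq i \leq h)}} \binom{k}{a_0, a_1, \ldots, a_h} u^{a_0} z_1^{a_1} \cdots z_h^{a_h}.
$$

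Next I would decide, for each admissible tuple, whether the multinomial coefficient is nonzero modulo $p$. By Kummer's theorem — equivalently, the multinomial extension of Lucas' criterion (Lemma \ref{Lucas}) applied iteratively — $\binom{k}{a_0, \ldots, a_h}$ is nonzero mod $p$ if and only if the base-$p$ addition $a_0 + \cdots + a_h = k$ is carry-free, which is equivalent to $s_p(a_0) + \cdots + s_p(a_h) = s_p(k) = h_0$, where $s_p$ denotes the sum of base-$p$ digits.

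For the vanishing statement at $h = h_0 + 1$, each $a_i$ with $i \geq 1$ has $s_p(a_i) \geq 1$, so any admissible tuple would satisfy $\sum_{i=0}^{h} s_p(a_i) \geq h_0 + 1 > h_0$. Hence no multinomial coefficient survives and the sum is identically zero. For the nonvanishing statement at $h = h_0$, it suffices to exhibit one admissible tuple with a nonzero coefficient: take $a_0 = 0$ and let $(a_1, \ldots, a_{h_0})$ be any ordering of the multiset containing $c_i$ copies of $p^i$ for $0 \leq i \leq v$ (of total cardinality $c_0 + \cdots + c_v = h_0$). Then each $s_p(a_i) = 1$, their sum is $h_0 = s_p(k)$, the addition is carry-free, and the associated monomial $z_1^{a_1} \cdots z_{h_0}^{a_{h_0}}$ therefore appears with nonzero coefficient in $\Delta_{z_{h_0}} \cdots \Delta_{z_1}(u^k)$. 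The main obstacle is purely bookkeeping — verifying the inclusion-exclusion leading to the closed-form expansion and invoking the multinomial Kummer/Lucas criterion — rather than anything conceptually deep, and both pieces are standard.
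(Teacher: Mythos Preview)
Your argument is correct. The inclusion--exclusion identity
\[
\Delta_{z_h}\cdots\Delta_{z_1}(u^k)=\sum_{\substack{a_0+\cdots+a_h=k\\ a_i\ge 1\ (1\le i\le h)}}\binom{k}{a_0,\ldots,a_h}u^{a_0}z_1^{a_1}\cdots z_h^{a_h}
\]
is cleanly derived, and the multinomial Kummer/Lucas criterion (carry-free base-$p$ addition, i.e.\ $\sum s_p(a_i)=s_p(k)=h_0$) then disposes of both assertions: for $h=h_0+1$ the constraint $a_i\ge 1$ forces $\sum_{i\ge 1}s_p(a_i)\ge h_0+1$, killing every coefficient; for $h=h_0$ your explicit tuple of $p$-power parts of $k$ produces a surviving monomial. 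Since distinct tuples give distinct monomials, there is no hidden cancellation to worry about.

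The paper itself omits the proof, deferring to \cite[Lemma~8.1]{LW}; the argument the authors had in mind (visible in their draft) proceeds differently. Rather than passing to a closed formula, they argue step by step: they show that when $\Delta_{z_{h+1}}$ is applied, monomials coming from distinct terms $c_{\mathbf i}\mathbf z^{\mathbf i}$ of $\Delta_{z_h}\cdots\Delta_{z_1}u^k$ never collide, so the question reduces to tracking a single chain of $u$-exponents $s_0=k,s_1,s_2,\ldots$ with each $s_{j+1}$ obtained from $s_j$ by a Lucas-admissible reduction. The maximal chain length is then read off as the digit sum $h_0$. Your route is more global and arguably cleaner --- one formula plus one invocation of Kummer --- while the paper's route is more local and perhaps better suited to situations where one wants finer control over which monomials survive at intermediate stages. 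Both rely, at bottom, on the same Lucas/Kummer arithmetic.
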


Combining Lemma \ref{counting weyl} and \cite[Proposition 13]{RMK}, we have the following
version of Hua's lemma.
\begin{prop}
\label{Hua's lemma}
Let $w_0(u)$ be a polynomial in $\mathbb{F}_q[t][u]$ of degree $k$ in $u$,
and let $w(\alpha) = \sum_{x \in I_X} e( w_0(x)  \alpha)$.
Let $h_0(k)$ be as defined in the statement of Lemma \ref{counting weyl}.
Suppose $j \leq h_0(k)$.
Then for every $\epsilon > 0$, we have
$$
\oint | w(\alpha) |^{2^j} \ d \alpha \ll q^{(2^j - j + \epsilon)X},
$$
where the implicit constant depends only on $k, q$, and $\epsilon$.
\end{prop}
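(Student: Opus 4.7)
The plan is to follow the iterated Weyl-differencing proof of Hua's lemma given in \cite[Proposition 13]{RMK}, adapting it to allow $k \geq p$ by invoking Lemma \ref{counting weyl} at the one point where the characteristic of $\mathbb{F}_q$ enters the argument. The starting observation is that $I_X$ is an $\mathbb{F}_q$-linear subspace of $\mathbb{F}_q[t]$, which gives the identity
\[
|w(\alpha)|^{2} \;=\; \sum_{z \in I_X}\,\sum_{y \in I_X} e\bigl(\alpha\,\Delta_{z}w_0(y)\bigr).
\]
Substituting this expansion into $|w(\alpha)|^{2^j} = (|w(\alpha)|^2)^{2^{j-1}}$ and applying Cauchy--Schwarz at each stage, I would obtain, after $j$ rounds, an upper bound of the form
\[
|w(\alpha)|^{2^j} \;\leq\; q^{(2^j - j - 1)X} \sum_{\mathbf{z}\in I_X^j}\,\sum_{y \in I_X} e\bigl(\alpha\,\Delta_{z_j}\cdots\Delta_{z_1}w_0(y)\bigr).
\]
Integrating against $d\alpha$ on $\mathbb{T}$ and invoking the orthogonality relation of Lemma \ref{lemma orthogonality relation} then converts the left-hand side into a counting problem, namely bounding the number $N_j$ of tuples $(y,z_1,\ldots,z_j) \in I_X^{j+1}$ satisfying $\Delta_{z_j}\cdots\Delta_{z_1}w_0(y)=0$.

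The bulk of the argument is then to show $N_j \ll q^{(1+\epsilon)X}$. I would proceed by splitting tuples $(z_1,\ldots,z_j)$ according to whether the polynomial $P(y) := \Delta_{z_j}\cdots\Delta_{z_1}w_0(y)$ vanishes identically in $y$ or not. For tuples where $P$ does not vanish identically, $P$ is a nonzero polynomial of degree at most $k$ with coefficients in $\mathbb{F}_q[t]$, so it has at most $k$ roots in $I_X\subseteq\mathbb{F}_q[t]$; a refinement exploiting the specific structure of the iterated difference (as carried out in \cite[Proposition 13]{RMK}) brings the contribution down to the desired order. For the ``bad'' tuples on which $P\equiv 0$ in $y$, one controls their count by a Schwartz--Zippel-type argument applied to the non-identically-vanishing coefficients of $P$ viewed as a polynomial in $z_1,\ldots,z_j$ over $\mathbb{F}_q[t]$.

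This is the juncture at which Lemma \ref{counting weyl} is indispensable. Kubota's argument in \cite[Proposition 13]{RMK} requires, as a structural input, that the iterated difference $\Delta_{z_j}\cdots\Delta_{z_1}w_0(y)$ is not identically zero as an element of $\mathbb{F}_q[t][y,z_1,\ldots,z_j]$; in characteristic zero, or equivalently when $k<p$, this is automatic for $j\leq k$, but in general characteristic it fails as soon as $j$ exceeds $h_0(k)$. Lemma \ref{counting weyl} supplies exactly the characterization needed: for $j\leq h_0(k)$ the iterated difference is a nonzero polynomial, the Schwartz--Zippel step remains effective, and Kubota's induction closes verbatim. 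The main --- indeed the only --- obstacle is verifying that the characteristic hypothesis enters Kubota's proof \emph{solely} through this polynomial-nonvanishing statement, so that Lemma \ref{counting weyl} serves as a drop-in replacement; once this inspection is made, the conclusion $\oint |w(\alpha)|^{2^j}\,d\alpha \ll q^{(2^j-j+\epsilon)X}$ follows.
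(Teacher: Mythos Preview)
Your high-level plan --- invoke \cite[Proposition~13]{RMK} and supply Lemma~\ref{counting weyl} as the replacement for the characteristic-zero fact that iterated Weyl differencing is nondegenerate --- is exactly what the paper does; indeed the paper gives no proof beyond the sentence preceding the proposition. Your diagnosis of \emph{where} Lemma~\ref{counting weyl} enters Kubota's argument is also correct.

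However, the specific mechanism you sketch contains a genuine error. The bound $N_j \ll q^{(1+\epsilon)X}$ is false for every $j \geq 2$: whenever $z_1 = 0$ one has $\Delta_{z_1} w_0 \equiv 0$, hence $\Delta_{z_j}\cdots\Delta_{z_1} w_0(y)=0$ for \emph{all} choices of $(y, z_2, \ldots, z_j)\in I_X^{j}$, and these tuples alone already contribute $q^{jX}$ to $N_j$. Integrating your pointwise Weyl inequality therefore yields at best $\oint|w|^{2^j}\,d\alpha \ll q^{(2^j - 1)X}$, missing Hua's exponent by $j-1$. The standard proof of Hua's lemma (and Kubota's) is not a direct count of $N_j$ but an \emph{induction} on $j$: one multiplies the pointwise Weyl inequality for $|w|^{2^{j-1}}$ against a second factor of $|w|^{2^{j-1}}$, integrates, interprets each term $\int |w|^{2^{j-1}} T_{j-1}(\alpha;\mathbf z)\,d\alpha$ as a nonnegative solution count, and bounds the total by combining the inductive hypothesis for $\oint|w|^{2^{j-1}}\,d\alpha$ with a divisor-type estimate on the number of representations of a given polynomial as an iterated difference value. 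The nonvanishing furnished by Lemma~\ref{counting weyl} is precisely what keeps that divisor step nondegenerate for $j\le h_0(k)$, so your instinct about its role survives; only the surrounding scaffolding needs to be replaced by the inductive argument.
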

We apply Proposition \ref{Hua's lemma} in Sections \ref{asymptotic section} and \ref{exceptional section}
with $w_0(u) = u^k$.

\section{Asymptotic Formula and $\tilde{G}_q(k)$}
\label{asymptotic section}

We now lower the bound on $s$ in Corollary \ref{minor arc bound 2}
via combination of Proposition \ref{Hua's lemma} and H\"{o}lder's inequality,
and obtain Theorems \ref{asymptotic stuff} and \ref{asymptotic stuff 2}.
First, we consider the case when $p \nmid (k-1)$ in Proposition \ref{prop s_1 first}.
We then take care of the case $k = m p^b + 1$ in Proposition \ref{prop s_1 mpb}.

Let
$$
s'_0(j) = 2k^2 + 1 - \Bigg{\lceil} \frac{2kj-2^j}{k+1-j} \Bigg{\rceil}.
$$

If $k < p$, we set
\begin{equation}
\label{def s_1}
s_1(k) = \min_{\stackrel{1 \leq j < k}{ 2^j \leq k(2k+1) }} s'_0(j).
\end{equation}
On the other hand, if $k > p$ and $p \nmid (k-1)$, we set
\begin{equation}
\label{def s_1 second}
s_1(k) = 2rk  + 1 - \Bigg{\lceil}  \frac{ 6r-8}{k-2} \Bigg{\rceil}.
\end{equation}

\begin{prop}
\label{prop s_1 first}
Suppose $k \geq 3$, $p \nmid k$, and 
$p \nmid (k-1)$.
Let $s_1(k)$ be as given in ~(\ref{def s_1}) when $k < p$ and in ~(\ref{def s_1 second})
when  $k > p$. 
If $s \geq s_1(k)$, then there exists $\delta_1 > 0$  such that
$$
\int_{\mathfrak{m}} |g(\alpha)|^{s} \ d \alpha
\ll q^{ ( s - k  -  \delta_1 ) X},
$$
where the implicit constant depends only on $s, q, k, \mathcal{R}'$, and $\delta_1$.
\end{prop}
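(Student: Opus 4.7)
\emph{Proof proposal.} The plan is to interpolate the minor arc moment bound of Corollary \ref{minor arc bound 2} (with $\delta_0 = 1$ since $p \nmid (k-1)$) against a low-moment Hua-type estimate from Proposition \ref{Hua's lemma}, using H\"older's inequality. Setting $s_0 = rk + r$, Corollary \ref{minor arc bound 2} yields
$$\int_{\mathfrak{m}} |g(\alpha)|^{2s_0} \, d \alpha \ll q^{(2s_0 - k - 1 + \epsilon)X},$$
while for each integer $j$ with $1 \leq j \leq h_0(k)$, Proposition \ref{Hua's lemma} applied to $w_0(u) = u^k$ gives
$$\oint |g(\alpha)|^{2^j} \, d \alpha \ll q^{(2^j - j + \epsilon)X}.$$

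For $2^j \leq s \leq 2s_0$, I would write $s = \alpha \cdot 2s_0 + \beta \cdot 2^j$ with $\alpha + \beta = 1$, $\alpha, \beta \geq 0$, and apply H\"older's inequality with exponents $1/\alpha$ and $1/\beta$ to obtain
$$\int_{\mathfrak{m}} |g|^s \, d\alpha \leq \Big( \int_{\mathfrak{m}} |g|^{2s_0} \, d\alpha \Big)^{\alpha} \Big( \oint |g|^{2^j} \, d\alpha \Big)^{\beta} \ll q^{[s - \alpha(k+1) - \beta j + \epsilon]X}.$$
The savings beyond $k$ in the exponent is $\alpha - \beta(k-j) - \epsilon$, which is strictly positive exactly when $\beta < 1/(k-j+1)$. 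Pushing $\beta$ to this critical threshold gives the cutoff
$$s_{\min}(j) = 2s_0 - \frac{2s_0 - 2^j}{k-j+1} = \frac{2r(k+1)(k-j) + 2^j}{k-j+1},$$
and multiplying through by $k-j+1$ to compare with the definition of $s'_0(j)$ shows that $\lceil s_{\min}(j) \rceil + 1$ agrees with $s'_0(j)$ after collecting terms; the $+1$ absorbs the strict inequality needed for a nonzero $\delta_1$.

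Minimizing over admissible $j$ then delivers the result in both ranges. In the case $k < p$ we have $r = k$ and $h_0(k) = k$, so every $j \in \{1, \ldots, k-1\}$ with $2^j \leq k(2k+1)$ is available (the latter ensures $s'_0(j) \leq 2s_0$), and the minimum over these choices is exactly the $s_1(k)$ defined in \eqref{def s_1}. In the case $k > p$, I would specialize to $j = 3$: writing $k = c_v p^v + \cdots + c_0$ in base $p$ with $v \geq 1$, the hypotheses $p \nmid k$ and $p \nmid (k-1)$ force $c_0 \geq 2$, whence $h_0(k) = c_v + \cdots + c_0 \geq 1 + 2 = 3$, so $j = 3$ is permissible, and a direct calculation shows $s'_0(3) = 2rk + 1 - \lceil (6r - 8)/(k-2) \rceil$, matching the formula in \eqref{def s_1 second}. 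For $s > 2s_0$ one propagates the estimate by the trivial bound $|g(\alpha)| \leq q^X$.

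The main obstacle will be the arithmetic verification that $\lceil s_{\min}(j) \rceil + 1$ collapses to exactly the stated $s'_0(j)$, in particular that the ceiling and the strict-versus-nonstrict inequality in $\beta < 1/(k-j+1)$ are handled consistently. A secondary point requiring care is justifying that the specialization $j = 3$ is truly optimal (or at least admissible and sufficient) in the $k > p$ case; in principle larger $j$ could give sharper bounds when $h_0(k)$ is large, but the stated formula only invokes $j = 3$, so one needs only to verify $h_0(k) \geq 3$, which the base-$p$ digit argument above provides.
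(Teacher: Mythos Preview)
Your approach is essentially the same as the paper's: H\"older interpolation between Corollary~\ref{minor arc bound 2} (with $\delta_0=1$) and Proposition~\ref{Hua's lemma}, then minimization over $j$ (all admissible $j$ when $k<p$, and $j=3$ when $k>p$, justified by the base-$p$ digit argument giving $h_0(k)\geq 3$). The one slip is in the rounding: since $s_{\min}(j)=2rk-\eta(j)$ with $\eta(j)=(2rj-2^j)/(k-j+1)$, the smallest integer \emph{strictly} exceeding $s_{\min}(j)$ is $\lfloor s_{\min}(j)\rfloor+1=2rk+1-\lceil\eta(j)\rceil$, not $\lceil s_{\min}(j)\rceil+1$; the paper achieves this integer exactly by the explicit choice $a'=\tfrac{k-j}{k-j+1}+\tfrac{\gamma(j)}{2r(k+1)-2^j}$, $b'=\tfrac{1}{k-j+1}-\tfrac{\gamma(j)}{2r(k+1)-2^j}$ with $\gamma(j)=1+\eta(j)-\lceil\eta(j)\rceil\in(0,1]$, which simultaneously guarantees $a'-(k-j)b'>0$ and lands $s_0(j)$ on the integer $2rk+1-\lceil\eta(j)\rceil$.
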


\begin{proof}
Let $h_0(k)$ be as in the statement of Lemma \ref{counting weyl}.
We have by Proposition \ref{Hua's lemma}, if $j \leq h_0(k)$, then for any $\epsilon > 0$,
\begin{equation}
\label{Hua}
\oint |g(\alpha)|^j \ d \alpha  \ll q^{(2^j - j + \epsilon)X}.
\end{equation}
We let $s_0(j) = 2 r (k+1) a' + 2^j b'$, where $a'+b' = 1$. Then H\"{o}lder's inequality
gives us
\begin{equation}
\label{ineq holder s}
\int_{\mathfrak{m}} |g(\alpha)|^{s_0(j)} \ d \alpha
\leq \left( \int_{\mathfrak{m}} |g(\alpha)|^{2r(k+1)} \ d \alpha  \right)^{a'}
\left( \oint |g(\alpha)|^{2^j} \ d \alpha \right)^{b'}.
\end{equation}

Recall for the range of $k$ we are considering, we can take $\delta_0 = 1$ in Corollary \ref{minor arc bound 2}.
We consider $j$ in the following range: $1 \leq j < k$, $2^j \leq (2r-1)(k+1)+1$ and $j \leq h_0(k)$.
Define
$$
\eta(j) = \frac{2rj}{k-j + 1} - \frac{2^j }{k-j + 1}
$$
and
let
$$
\gamma(j) = 1 + \eta(j) - \lceil \eta(j) \rceil.
$$
We choose
$$
a' = \frac{k-j}{k-j + 1} + \frac{ \gamma(j) } {2r(k+1)-2^j }
$$
and
$$
b' = \frac{1}{k-j + 1} - \frac{ \gamma(j) } {2r(k+1)-2^j }.
$$
Note that our restriction on $j$ ensures $b' > 0.$
Also, this choice of $a'$ and $b'$ ensures
$a' - (k-j)b' > 0$. Then, by Corollary \ref{minor arc bound 2}
and ~(\ref{Hua}), we have the following bound for ~(\ref{ineq holder s}):
$$
\int_{\mathfrak{m}} |g(\alpha)|^{s_0(j)} \ d \alpha
\ll q^{\epsilon X} q^{a'(2r(k+1) - k - 1)X} q^{b'(2^j - j)X}
\ll q^{ ( s_0(j) - k  -  ( a' - (k-j)b' ) + \epsilon ) X}.
$$
By the trivial bound $|g(\alpha)| \leq q^X$, it follows that for any $s \geq s_0(j)$ we have
$$
\int_{\mathfrak{m}} |g(\alpha)|^{s} \ d \alpha
\ll q^{(s - s_0(j))X} \int_{\mathfrak{m}} |g(\alpha)|^{s_0(j)} \ d \alpha
\ll q^{ ( s - k  -  ( a' - (k-j)b' ) + \epsilon ) X}.
$$

We can simplify $s_0(j)$ as
\begin{eqnarray}
\label{bound on s0(j)}
s_0(j) &=&  2 r (k+1) \left( \frac{k-j}{k-j + 1} + \frac{ \gamma(j) } {2r(k+1)-2^j } \right)
+ 2^j \left( \frac{1}{k-j + 1} - \frac{ \gamma(j) } {2r(k+1)-2^j } \right)
\notag
\\
&=& 2rk
- \eta(j) + \gamma(j)
\notag
\\
&=& 2rk
+ 1 - \lceil \eta(j) \rceil.
\notag
\end{eqnarray}
To establish our result, all we have left is to choose $j$ within the appropriate range
given above such that $s_0(j)$ is as small as possible. This value of $s_0(j)$ will be our $s_1(k)$.
We consider the two cases separately.

Case 1: $k > p$. 
From $p \nmid k$, $p \nmid (k-1)$, and $k > p$, we can verify that $3 \leq h_0(k)$.
Thus we know we can apply Weyl differencing at least three times. Therefore,
we set $s_1(k) = s_0(3)$.
Since
\begin{equation}
\label{eta 2 first}
0 <  \eta(3) = \frac{ 6r-8}{k-2},
\end{equation}
we obtain
$$
s_1(k) =  2rk  + 1 - \Bigg{\lceil}  \frac{ 6r - 8 }{k-2} \Bigg{\rceil}.
$$

Case 2: $k < p$. In this case, we have $h_0(k) = k$.
We set
\begin{equation}
\label{asympt eqn 1}
s_1(k) = \min_{\stackrel{1 \leq j < k}{ 2^j \leq (2r-1)(k+1)+1 }} s_0(j).
\end{equation}
Since $r =  k - \lfloor k/p \rfloor = k$, we have $s_0(j) = s'_0(j)$ and $(2r-1)(k+1)+1 = k(2k+1).$
Therefore, we see that $s_1(k)$ given above in ~(\ref{asympt eqn 1}) coincides with ~(\ref{def s_1}).

\end{proof}

Now we consider the case $k = mp^b + 1$.
If $m=1$, we set
$s_1(k) = 4k+5.$
If $m>1$, then we set
\begin{equation}
\label{asympt eqn 3}
s_1(k) =  2rk + 2r - \Bigg{\lfloor} \frac{(m-1)(1- 1/p)}{2} \Bigg{\rfloor}.
\end{equation}

\begin{prop}
\label{prop s_1 mpb}
Suppose $k = m p^b + 1$ with $p \nmid m$. Let $s_1(k)$ be $4k+ 5$ 
when $m=1$ and as in ~(\ref{asympt eqn 3}) when $m>1$.
If $s \geq s_1(k)$, then there exists $\delta_1 > 0$  such that
$$
\int_{\mathfrak{m}} |g(\alpha)|^{s} \ d \alpha
\ll q^{ ( s - k  -  \delta_1 ) X},
$$
where the implicit constant depends only on $s, q, k, \mathcal{R}'$, and $\delta_1$.
\end{prop}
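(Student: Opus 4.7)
The plan is to handle the two sub-cases separately because the minor-arc input differs.

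For $m=1$, so that $k = p^b+1$, the result is essentially immediate from Corollary~\ref{minor arc bound 2'}. Setting $s = 2k+2$ there yields
$$
\int_{\mathfrak{m}} |g(\alpha)|^{4k+5}\,d\alpha \ll q^{(4k+5 - k - \delta_0 + \epsilon)X}
$$
with $\delta_0 = 1/(16(p^b+2))$. For any integer $s \geq 4k+5$, the trivial bound $|g(\alpha)| \leq q^X$ applied to the extra $s-(4k+5)$ factors upgrades this to the required estimate with some $\delta_1$ slightly smaller than $\delta_0$, so $s_1(k)=4k+5$ works.

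For $m>1$, I would adapt the H\"older argument from Proposition~\ref{prop s_1 first}, combining Corollary~\ref{minor arc bound 2} (now with the smaller value $\delta_0 = 1/(4p^b)$) and Hua's lemma (Proposition~\ref{Hua's lemma}) at $j = 1$. For $k = mp^b+1$ with $p \nmid m$, the base-$p$ digit sum of $k$ is at least two, so $h_0(k) \geq 2$ and one Weyl differencing step is available. With $a' + b' = 1$ and $s_0 := 2r(k+1)a' + 2b'$, H\"older gives
$$
\int_{\mathfrak{m}} |g(\alpha)|^{s_0}\,d\alpha \leq \Bigl( \int_{\mathfrak{m}} |g(\alpha)|^{2r(k+1)}\,d\alpha \Bigr)^{a'} \Bigl( \oint |g(\alpha)|^{2}\,d\alpha \Bigr)^{b'} \ll q^{(s_0 - k - (\delta_0 a' - (k-1)b') + \epsilon)X}.
$$
The natural analogue of the parameter choice in Proposition~\ref{prop s_1 first} is
$$
a' = \frac{k-1}{k-1+\delta_0} + \frac{\gamma}{2r(k+1)-2}, \qquad b' = \frac{\delta_0}{k-1+\delta_0} - \frac{\gamma}{2r(k+1)-2},
$$
which leads to $\delta_0 a' - (k-1)b' = (k-1+\delta_0)\gamma/(2r(k+1)-2)$, strictly positive whenever $\gamma > 0$, together with
$$
s_0 = 2r(k+1) - \zeta + \gamma, \qquad \zeta := \frac{\delta_0(2r(k+1)-2)}{k-1+\delta_0}.
$$

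The crux is a numerical comparison of $\zeta$ with the quantity defining $s_1(k)$. Using the identity $r = (1-1/p)(m-1)p^b + 2$ (valid when $k = mp^b+1$, $m>1$), a short expansion gives
$$
\zeta = 2r\delta_0 + \frac{\delta_0(4r - 2r\delta_0 - 2)}{k-1+\delta_0} > 2r\delta_0 = \frac{r}{2p^b} = \frac{(m-1)(1-1/p)}{2} + \frac{1}{p^b}.
$$
Thus $\zeta > (m-1)(1-1/p)/2$, whence $\lfloor \zeta \rfloor \geq \lfloor (m-1)(1-1/p)/2 \rfloor$. Choosing $\gamma \in \bigl(0,\,\zeta - \lfloor (m-1)(1-1/p)/2 \rfloor\bigr]$ keeps $b'>0$ and yields
$$
s_0 \leq 2r(k+1) - \lfloor (m-1)(1-1/p)/2 \rfloor = s_1(k).
$$
For any integer $s \geq s_1(k) \geq s_0$, bounding the extra $s - s_0$ factors by $q^{(s-s_0)X}$ completes the proof with some $\delta_1 > 0$ after absorbing the arbitrary $\epsilon$.

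The main obstacle is the bookkeeping for the $m>1$ case: one must simultaneously ensure $b'>0$, $\delta_0 a' - (k-1)b' > 0$, and that the resulting $s_0$ is at or below the stated integer threshold. Once the strict inequality $\zeta > (m-1)(1-1/p)/2$ has been isolated via the identity $r = (1-1/p)(m-1)p^b + 2$, the remaining manipulations are essentially identical to the $\delta_0 = 1$ computation carried out in the proof of Proposition~\ref{prop s_1 first}.
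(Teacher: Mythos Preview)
Your proof is correct and follows the same overall strategy as the paper (H\"older interpolation between Corollary~\ref{minor arc bound 2} and Hua's lemma), but with one genuine difference in the $m>1$ case: you interpolate at $j=1$, whereas the paper uses $j=3$. In the paper's notation one has
\[
s_0(j) = 2rk + 2r - \delta_0\,\frac{2r(k+1)-2^j}{k-j+\delta_0} + \epsilon(j),
\]
and at $j=3$ the paper bounds the subtracted term from below by $(m-1)(1-1/p)/2$ via the same identity $r=(m-1)(p^b-p^{b-1})+2$ that you use. Your choice $j=1$ gives a marginally smaller value of the subtracted quantity $\zeta$, but since you show $\zeta > (m-1)(1-1/p)/2 + 1/p^b$, the floor $\lfloor (m-1)(1-1/p)/2 \rfloor$ is reached either way, and the resulting threshold $s_1(k)$ is identical. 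Your route is a touch simpler because $j=1$ needs only $h_0(k)\ge 1$ rather than the verification $h_0(k)\ge 3$ that the paper performs; the paper's choice $j=3$ would in principle allow a slightly larger saving before rounding, but this does not affect the stated integer threshold. One small remark: in the edge case $\lfloor (m-1)(1-1/p)/2\rfloor = 0$ your interval for $\gamma$ becomes $(0,\zeta]$, and $\gamma=\zeta$ gives $b'=0$; this is harmless since then $s_1(k)=2r(k+1)$ and Corollary~\ref{minor arc bound 2} already suffices without any interpolation.
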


\begin{proof}
We first deal with the case $m>1$.
Let $h_0(k)$ be as in the statement of Lemma \ref{counting weyl}.
If $j \leq h_0(k)$, then for any $\epsilon > 0$ we have
~(\ref{Hua}).
We let $s_0(j) = 2 r (k+1) a' + 2^j b'$, where $a'+b' = 1$, as before in Proposition \ref{prop s_1 first}.
Then by H\"{o}lder's inequality, we have ~(\ref{ineq holder s}).
We consider $j$ in the following range: $1 \leq j < k$, $2^j \leq (2r-1)(k+1)+1$ and $j \leq h_0(k)$.
Let $\epsilon(j)$ be a small positive number.
We choose
$$
a' = \frac{k-j}{k-j + \delta} + \frac{ \epsilon(j) } {2r(k+1)-2^j }
$$
and
$$
b' = \frac{\delta}{k-j + \delta} - \frac{ \epsilon(j) } {2r(k+1)-2^j },
$$
where we let $\delta = \delta_0 = 1/(4p^b)$ from Corollary \ref{minor arc bound 2}.

Note that we pick $\epsilon(j)$ sufficiently small to make sure $b' > 0$.
Also, the range of $j$ we are considering and this choice of $a'$ and $b'$ ensure
$$
\delta a' - (k-j)b' = \frac{ (\delta + k -j) \epsilon(j) } {2r(k+1)-2^j } > 0.
$$ By Corollary \ref{minor arc bound 2}
and ~(\ref{Hua}), we have the following bound for ~(\ref{ineq holder s}):
$$
\int_{\mathfrak{m}} |g(\alpha)|^{s_0(j)} \ d \alpha
\ll q^{\epsilon X} q^{a'(2r(k+1) - k - \delta)X} q^{b'(2^j - j)X}
\ll q^{ ( s_0(j) - k  -  ( \delta a' - (k-j)b' ) + \epsilon ) X}.
$$
By the trivial bound $|g(\alpha)| \leq q^X$, it follows that for any $s \geq s_0(j)$
we have
$$
\int_{\mathfrak{m}} |g(\alpha)|^{s} \ d \alpha
\ll q^{(s - s_0(j))X} \int_{\mathfrak{m}} |g(\alpha)|^{s_0(j)} \ d \alpha
\ll q^{ ( s - k  -  ( \delta a' - (k-j)b' ) + \epsilon ) X}.
$$

We can simplify $s_0(j)$ as
\begin{eqnarray}
\label{bound on s0(j) mpb}
s_0(j) &=&  2 r (k+1) \left( \frac{k-j}{k-j + \delta} + \frac{ \epsilon(j) } {2r(k+1)-2^j } \right)
+ 2^j \left( \frac{\delta}{k-j + \delta} - \frac{ \epsilon(j) } {2r(k+1)-2^j } \right)
\notag
\\
&=& 2rk + 2(1-\delta)k - \frac{2r(j + (1-\delta) (\delta-j))}{k-j + \delta}
+ \frac{2^j \delta}{k-j + \delta} + \epsilon(j)
\notag
\\
&=& 2rk
+ 2(1-\delta)r
- \delta \frac{2 r (1 + j- \delta) - 2^j}{k-j+\delta} + \epsilon(j)
\notag
\\
&=& 2rk + 2r
- \delta \frac{2 r (k+1) - 2^j }{k-j+\delta} + \epsilon(j).
\notag
\end{eqnarray}
To establish our result, all we have left is to choose $j$ within the appropriate range
given above such that $s_0(j)$ is as small as possible.
We would like to maximize the value
$$
\delta \frac{2 r (k+1) - 2^j }{k-j+\delta}
$$
in order to minimize $s_0(j)$. We then let the smallest integer greater than the $s_0(j)$ found to
be our $s_1(k)$.

Since $m>1$, we can verify that $h_0(k) \geq 3$. Thus
we know we can apply Weyl differencing at least three times. 
We have
$$
r = (1 - 1/p)(k-p^b) + (1 + 1/p) = (m-1)(p^b-p^{b-1})+2.
$$
Also, recall from above we have set $\delta = \delta_0 =  1/ (4 p^b)$.
Let $j=3$ and we obtain
\begin{eqnarray}
s_0(3) &=&
2rk + 2r - \frac{2 r (k+1) - 2^3 }{4 p^b(k-3+\delta)} + \epsilon(3)
\notag
\\
&=&
2rk + 2r - \frac{2 (m-1)(p^b-p^{b-1}) (k+1)}{4 p^b(k-3+\delta)} - \frac{4(k+1)}{4 p^b(k-3+\delta)} + \frac{8}{4 p^b(k-3+\delta)} + \epsilon(3)
\notag
\\
&=&
2rk + 2r - \frac{(m-1)(1 - 1/p) (k+1)}{2(k-3+\delta)} - \frac{ k - 1 }{p^b(k-3+\delta)} + \epsilon(3)
\notag
\\
&\leq&
2rk + 2r - \frac{(m-1)(1- 1/p)(k+1)}{2(k-3+\delta)}
\notag
\\
&\leq&
2rk + 2r - \frac{(m-1)(1- 1/p)}{2}.
\notag
\end{eqnarray}
Therefore, we let $s_1(k) =  \Big{\lceil} 2rk + 2r - \frac{(m-1)(1- 1/p)}{2} \Big{\rceil}
= 2rk + 2r - \Big{\lfloor} \frac{(m-1)(1- 1/p)}{2} \Big{\rfloor}
\geq s_0(3).$

The case $m=1$ is an immediate consequence of Corollary \ref{minor arc bound 2'}. When $m=1$, we have
$r=2$ and the saving in the exponent of $\delta_0 = \frac{1}{16(p^b + 2)}$ from Corollary \ref{minor arc bound 2'}, however, with these values our approach above is not effective as in the case $m>1$. Therefore, we let $s_1(k) = 4k+5$ in this case.
\end{proof}

We are now in position to prove Theorems \ref{asymptotic stuff} and \ref{asymptotic stuff 2}.
By using the bounds on minor arcs from this section, we obtain an estimate for $\widetilde{G}_q(k)$.

\begin{proof}[Proof of Theorems \ref{asymptotic stuff} and \ref{asymptotic stuff 2}]
The result is an immediate consequence of combining our major arc estimates, Theorem \ref{thm major arcs}, and our minor arc estimates, Propositions ~\ref{prop s_1 first} and ~\ref{prop s_1 mpb}, from which we obtain $\widetilde{G}_q(k) \leq \max\{ s_1(k), 2k + 1 \}$.
We then simplify $s_1(k)$ from Propositions ~\ref{prop s_1 first} and ~\ref{prop s_1 mpb} via ~(\ref{defn of r}) to obtain the estimates given in the statement of Theorem \ref{asymptotic stuff}. When $k<p$, we see that
$s_1(k)$ given in ~(\ref{def s_1}) is identical to that defined for the integer case in \cite{W1}. Consequently, our estimates for
$\widetilde{G}_q(k)$ when $k < p$ are identical to the estimates of $\widetilde{G}(k)$ obtained in \cite{W1}.
\end{proof}

\section{Slim Exceptional Sets}
\label{exceptional section}

We carry out a similar calculation here as in Section \ref{asymptotic section} and obtain
Theorems \ref{exceptional stuff} and \ref{exceptional stuff 2}.
Recall from Section \ref{Introduction} that $\widetilde{E}_{s,k}(N, \psi)$ is defined to be
the set of $n \in I_N \cap \mathbb{J}_q^k[t]$ which satisfies
~(\ref{exceptional}).
As in \cite{W1}, 
we refer to a function $\psi(z)$
as being \textit{sedately increasing} when $\psi(z)$ is
a function of positive variable $z$ increasing monotonically to infinity,
and satisfying the condition that when $z$ is large, one has $\psi(z) = O(z^{\epsilon})$
for a positive number $\epsilon$ sufficiently small in the ambient context.
We also prove the following theorem on the estimate
of $|\widetilde{E}_{s,k}(N, \psi)|$ when $\psi$ is a sedately increasing function.
In order to avoid clutter in the exposition, we present the case $k = p^b+1$
separately from the rest of the cases.
\begin{thm}
\label{exceptional stuff 3}
Suppose $k \geq 3$ and $p \nmid k$. Suppose further that either $p \nmid (k-1)$ or $k = m p^b+1$, $m>1$. Let $\delta_0$ be as in the statement of Theorem \ref{Thm minor arc bound 1}.
If $\psi(z)$ is a sedately increasing function, then for $s \geq rk + r$ we have
$$
|\widetilde{E}_{s,k}(N, \psi)| \ll q^{(k - \delta_0 + \epsilon)P} \psi(q^P)^{2},
$$
where the implicit constant depends on
$s, q, k, \epsilon,  \mathcal{R}'$, and $\psi$.
\end{thm}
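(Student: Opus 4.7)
The strategy will be a standard second-moment argument, combining the minor arc bound of Corollary~\ref{minor arc bound 2} with Plancherel's identity on the compact abelian group $\mathbb{T}$, whose Pontryagin dual is $\mathbb{F}_q[t]$. To begin, I would stratify the exceptional set by the value $P = P_k(n)$, setting $E_P := \{n \in \widetilde{E}_{s,k}(N,\psi) : P_k(n) = P\}$, so that $\widetilde{E}_{s,k}(N,\psi) = \bigsqcup_P E_P$. Since $n \in I_N$ forces $P \leq \lceil N/k \rceil + O(1)$, only finitely many $P$ contribute, and any bounded initial range of $P$ contributes only $O(1)$ in total. It therefore suffices to bound $|E_P|$ for large $P$.

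The first substantive step is the pointwise lower bound on the minor arc integral $I_n^{\mathfrak{m}} := \int_{\mathfrak{m}} g(\alpha)^s e(-n\alpha)\, d\alpha$ for $n \in E_P$. Decomposing $R_{s,k}(n) = \int_{\mathfrak{M}} g(\alpha)^s e(-n\alpha)\, d\alpha + I_n^{\mathfrak{m}}$ and invoking Theorem~\ref{thm major arcs}, which is applicable since $s \geq rk + r \geq 2k + 1$, yields
\[
R_{s,k}(n) - \mathfrak{S}_{s,k}(n)J_\infty(n)\, q^{(s-k)P} = I_n^{\mathfrak{m}} + O\bigl(q^{(s-k-\epsilon')P}\bigr)
\]
for some fixed $\epsilon' > 0$. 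Because $\psi$ is sedately increasing, I may assume $\psi(z) = O(z^{\epsilon'/2})$, so $q^{(s-k)P}\psi(q^P)^{-1}$ dominates the error term by a factor of at least $q^{(\epsilon'/2)P}$ once $P$ is large. Combined with the defining inequality for $\widetilde{E}_{s,k}(N,\psi)$, this produces $|I_n^{\mathfrak{m}}| \gg q^{(s-k)P}\psi(q^P)^{-1}$ uniformly for $n \in E_P$.

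Next, Plancherel's identity on $\mathbb{T}$ gives
\[
\sum_{n \in \mathbb{F}_q[t]} |I_n^{\mathfrak{m}}|^2 = \int_{\mathfrak{m}} |g(\alpha)|^{2s}\, d\alpha,
\]
since $I_n^{\mathfrak{m}}$ is precisely the $n$-th Fourier coefficient of $g^s \mathbf{1}_{\mathfrak{m}} \in L^2(\mathbb{T})$. Restricting the sum to $E_P$, squaring the lower bound from the previous step, and then applying Corollary~\ref{minor arc bound 2} — whose hypothesis $s \geq rk + r$ is exactly the one assumed — yields
\[
|E_P|\, q^{2(s-k)P}\psi(q^P)^{-2} \ll \int_{\mathfrak{m}} |g|^{2s}\, d\alpha \ll q^{(2s-k-\delta_0+\epsilon)(P+1)}.
\]
Rearranging produces $|E_P| \ll q^{(k-\delta_0+\epsilon)P}\psi(q^P)^2$, and summing this (geometrically increasing) bound over $0 \leq P \leq \lceil N/k\rceil$ concentrates on the maximal $P$, delivering the claimed estimate.

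The main obstacle I anticipate is the calibration in the second step: I must ensure that the tolerance $\psi(q^P)^{-1}$ afforded by the sedately increasing hypothesis genuinely exceeds the major arc error $q^{-\epsilon'P}$. This demands that the exponent implicit in the definition of ``sedately increasing'' be chosen smaller than the saving $\epsilon'$ produced by Theorem~\ref{thm major arcs}; without this, the exceptional deviation could be absorbed entirely into the major arc error and the Plancherel machinery would yield no information. Beyond this calibration, the argument is routine.
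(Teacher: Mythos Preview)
Your proposal is correct and follows essentially the same second-moment approach as the paper, which introduces the auxiliary sum $K(\alpha) = \sum_{n \in \widetilde{E}} \eta(n)\, e(n\alpha)$ and applies Cauchy--Schwarz to $\int_{\mathfrak{m}} g(\alpha)^s K(-\alpha)\,d\alpha$ rather than invoking Plancherel directly on the Fourier coefficients $I_n^{\mathfrak{m}}$; the two formulations are equivalent. Your explicit stratification by $P = P_k(n)$ in fact handles more carefully a point the paper leaves implicit, namely that both $g(\alpha)$ and the exceptional threshold depend on $n$ through $P$.
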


\begin{thm}
\label{exceptional stuff 3'}
Suppose $k \geq 3$ and $p \nmid k$. Suppose further that $k = p^b + 1$. Let
$$
\delta_0 = \frac{1}{16(p^b + 2)}.
$$
If $\psi(z)$ is a sedately increasing function, then for $s \geq 2k + 3$ we have
$$
|\widetilde{E}_{s,k}(N, \psi)| \ll q^{(k - \delta_0 + \epsilon)P} \psi(q^P)^{2},
$$
where the implicit constant depends on
$s, q, k, \epsilon, \mathcal{R}'$, and $\psi$.
\end{thm}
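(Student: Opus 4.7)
The plan is to mirror the proof scheme that underlies Theorem \ref{exceptional stuff 3}, replacing the role of Corollary \ref{minor arc bound 2} with Corollary \ref{minor arc bound 2'}, which is the minor arc estimate tailored to the case $k = p^b + 1$. For any $n \in \widetilde{E}_{s,k}(N,\psi)$, writing $P = P_k(n)$ and $X = P+1$, the identity
$$R_{s,k}(n) - \int_{\mathfrak{M}} g(\alpha)^s e(-n\alpha)\, d\alpha = \int_{\mathfrak{m}} g(\alpha)^s e(-n\alpha)\, d\alpha,$$
Theorem \ref{thm major arcs} (applicable because $s \geq 2k+3 \geq 2k+1$), and the sedately increasing hypothesis on $\psi$ combine to give
$$\left| \int_{\mathfrak{m}} g(\alpha)^s e(-n\alpha)\, d\alpha \right| \gg q^{(s-k)P} \psi(q^P)^{-1},$$
once the major arc error of size $O(q^{(s-k-\epsilon)P})$ is absorbed into a negligible fraction of the right hand side by choosing the implicit $\epsilon$ in $\psi(z) = O(z^\epsilon)$ sufficiently small.

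Since $g$ depends on $n$ through $X = P_k(n)+1$, I would next partition $\widetilde{E}_{s,k}(N,\psi)$ according to $\deg n$: for each $M$ with $0 \leq M < N$, let $E_M$ denote those exceptional $n$ of degree exactly $M$, on which $P$ takes a common value $P_M$ and $g = g_{X_M}$ is a fixed exponential sum independent of $n \in E_M$. Applying Parseval's identity, which is a direct consequence of the orthogonality relation ~(\ref{orthogonality relation}), to the function $g(\alpha)^s \mathbf{1}_{\mathfrak{m}}(\alpha)$ yields
$$\sum_{n} \left| \int_{\mathfrak{m}} g_{X_M}(\alpha)^s e(-n\alpha)\, d\alpha \right|^2 = \int_{\mathfrak{m}} |g_{X_M}(\alpha)|^{2s}\, d\alpha,$$
so restricting the sum to $n \in E_M$ and invoking the lower bound above produces
$$|E_M| \cdot q^{2(s-k)P_M} \psi(q^{P_M})^{-2} \ll \int_{\mathfrak{m}} |g_{X_M}(\alpha)|^{2s}\, d\alpha.$$

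The remaining task is to estimate the minor arc mean value on the right. Because Corollary \ref{minor arc bound 2'} is phrased for the odd moment $\int_{\mathfrak{m}} |g|^{2s+1}\, d\alpha$, I would split $|g|^{2s} = |g|^{2(s-1)+1} \cdot |g|$, apply Corollary \ref{minor arc bound 2'} with $s-1$ in place of $s$ (permitted because $s-1 \geq 2k+2$), and bound the remaining factor trivially by $q^{X_M}$, arriving at
$$\int_{\mathfrak{m}} |g_{X_M}(\alpha)|^{2s}\, d\alpha \ll q^{(2s - k - \delta_0 + \epsilon)X_M}.$$
Substituting gives $|E_M| \ll q^{(k - \delta_0 + \epsilon)P_M} \psi(q^{P_M})^2$, and summing over $M < N$ costs only a factor of $O(N)$, which is absorbed into $q^{\epsilon P}$ by enlarging $\epsilon$ slightly. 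The main obstacle is the $n$-dependence of $X$, handled cleanly by partitioning by degree; the trick of trading one factor of $|g|$ for the trivial bound $q^X$ is precisely what pins down the threshold $s \geq 2k+3$ appearing in the hypothesis.
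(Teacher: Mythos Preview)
Your proof is correct and follows essentially the same route as the paper. The only cosmetic differences are that the paper packages the Bessel-type step as a Cauchy--Schwarz inequality applied to $\int_{\mathfrak{m}} g(\alpha)^s K(-\alpha)\,d\alpha$ with the auxiliary exponential sum $K(\alpha) = \sum_{n} \eta(n) e(n\alpha)$ (which is equivalent to your Parseval formulation), and the paper leaves the $n$-dependence of $X$ implicit rather than partitioning by degree as you do; your treatment of that point and your derivation of the threshold $s \geq 2k+3$ from trading one factor of $|g|$ for the trivial bound are exactly what the paper does without spelling it out.
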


First, we consider the case when $p \nmid (k-1)$ in
Proposition \ref{prop u_2}. We then take care of the case $k = mp^b + 1$ in Proposition \ref{prop u_2 mpb}.

Let
$$
u'_0(j) = k^2 + 1 - \Bigg{\lceil} \frac{kj-2^{j-1}}{k+1-j} \Bigg{\rceil}.
$$
If $k < p$, we set
\begin{equation}
\label{def u_2}
u_2(k) = \min_{\stackrel{1 \leq j < k}{ 2^j \leq k(2k+1) }} u'_0(j).
\end{equation}
On the other hand, if  $k > p$ and $p \nmid (k-1)$, we set
\begin{equation}
\label{def u_2 second}
u_2(k) = rk  + 1 - \Bigg{\lceil}  \frac{ 3r-4}{k-2} \Bigg{\rceil}.
\end{equation}

\begin{prop}
\label{prop u_2}
Suppose $k \geq 3$, $p \nmid k$, 
and $p \nmid (k-1)$.
Let $u_2(k)$ be as given in ~(\ref{def u_2}) when $k < p$ and in ~(\ref{def u_2 second})
when  $k > p$. 
If $s \geq u_2(k)$, then there exists $\delta_2 > 0$  such that
$$
\int_{\mathfrak{m}} |g(\alpha)|^{2s} \ d \alpha
\ll q^{ ( 2s - k  -  \delta_2 ) X},
$$
where the implicit constant depends only on $s, q, k, \mathcal{R}'$, and $\delta_2$.
\end{prop}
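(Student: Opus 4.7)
The plan is to imitate the proof of Proposition \ref{prop s_1 first} verbatim, rescaling the H\"older weights by a factor of $\tfrac{1}{2}$ to account for the fact that we now want to control $\int_{\mathfrak{m}} |g(\alpha)|^{2s}\,d\alpha$ rather than $\int_{\mathfrak{m}} |g(\alpha)|^{s}\,d\alpha$. I begin by invoking Hua's lemma (Proposition \ref{Hua's lemma}) with $w_0(u) = u^k$ to obtain, for each $1 \leq j \leq h_0(k)$ and $\epsilon > 0$,
$$\oint |g(\alpha)|^{2^j}\,d\alpha \ll q^{(2^j - j + \epsilon)X}.$$
Since $p \nmid (k-1)$, Corollary \ref{minor arc bound 2} applies with $\delta_0 = 1$, giving $\int_{\mathfrak{m}} |g(\alpha)|^{2r(k+1)}\,d\alpha \ll q^{(2r(k+1) - k - 1 + \epsilon)X}$.

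The next step is the H\"older interpolation. For an integer $j$ with $1 \leq j < k$, $j \leq h_0(k)$ and $2^j \leq (2r-1)(k+1) + 1$, I set
$$u_0(j) = r(k+1)a' + 2^{j-1} b', \qquad a' + b' = 1,$$
so that $2u_0(j) = 2r(k+1)a' + 2^j b'$, and H\"older gives
$$\int_{\mathfrak{m}} |g(\alpha)|^{2 u_0(j)}\,d\alpha \leq \left(\int_{\mathfrak{m}} |g(\alpha)|^{2r(k+1)}\,d\alpha\right)^{a'} \left(\oint |g(\alpha)|^{2^j}\,d\alpha\right)^{b'}.$$
I then choose
$$a' = \frac{k-j}{k-j+1} + \frac{\gamma(j)}{r(k+1) - 2^{j-1}}, \qquad b' = \frac{1}{k-j+1} - \frac{\gamma(j)}{r(k+1) - 2^{j-1}},$$
where $\widetilde{\eta}(j) = (rj - 2^{j-1})/(k-j+1)$ and $\gamma(j) = 1 + \widetilde{\eta}(j) - \lceil \widetilde{\eta}(j) \rceil$. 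A short computation parallel to the one in Proposition \ref{prop s_1 first} yields $u_0(j) = rk + 1 - \lceil \widetilde{\eta}(j) \rceil$ and
$$\int_{\mathfrak{m}} |g(\alpha)|^{2 u_0(j)}\,d\alpha \ll q^{(2 u_0(j) - k - \delta_2 + \epsilon)X},$$
with the saving $\delta_2 = a' - (k-j)b' = (k-j+1)\gamma(j)/(r(k+1) - 2^{j-1}) > 0$; the trivial bound $|g(\alpha)| \leq q^X$ then propagates the estimate to every $s \geq u_0(j)$.

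Finally, I choose $j$ so as to minimize $u_0(j)$. In the regime $k > p$, the hypothesis $p \nmid k(k-1)$ ensures $h_0(k) \geq 3$, so taking $j=3$ gives $u_0(3) = rk + 1 - \lceil (3r-4)/(k-2) \rceil$, matching (\ref{def u_2 second}). In the regime $k < p$, Lemma \ref{counting weyl} yields $h_0(k) = k$ and we have $r = k$, so the optimum over $1 \leq j < k$ with $2^j \leq k(2k+1)$ reproduces (\ref{def u_2}). I expect the only bookkeeping obstacle to be verifying that the weights satisfy $b' > 0$ (which is exactly the range restriction $2^j \leq (2r-1)(k+1)+1$) and that the saving $a' - (k-j)b'$ is strictly positive; since both are direct mirrors of the corresponding checks in Proposition \ref{prop s_1 first}, no genuinely new difficulty should arise.
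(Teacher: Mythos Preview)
Your proposal is correct and is essentially identical to the paper's own argument: the paper likewise sets $2u_0(j)=2r(k+1)a'+2^jb'$, chooses exactly the same $a',b'$ with $\eta(j)=(rj-2^{j-1})/(k-j+1)$ and $\gamma(j)=1+\eta(j)-\lceil\eta(j)\rceil$, and then specializes to $j=3$ when $k>p$ and optimizes over the same range when $k<p$. There is no substantive difference between your route and the paper's.
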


\begin{proof}
Since the proof is similar to that of Proposition \ref{prop s_1 first}, we only give the set up of the proof here.
Let $h_0(k)$ be as in the statement of Lemma \ref{counting weyl}.
We let $2 u_0(j) = 2 r (k+1) a' + 2^j b'$, where $a'+b' = 1$. By H\"{o}lder's inequality,
we have
\begin{equation}
\label{ineq holder u}
\int_{\mathfrak{m}} |g(\alpha)|^{2 u_0(j)} \ d \alpha
\leq \left( \int_{\mathfrak{m}} |g(\alpha)|^{2r(k+1)} \ d \alpha  \right)^{a'}
\left( \oint |g(\alpha)|^{2^j} \ d \alpha \right)^{b'}.
\end{equation}

Recall that for the range of $k$ we are considering, we can take $\delta_0 = 1$ in Corollary \ref{minor arc bound 2}.
We consider $j$ in the following range: $1 \leq j < k$, $2^j \leq (2r-1)(k+1)+1$ and $j \leq h_0(k)$.
Define
$$
\eta(j) = \frac{rj}{k-j + 1} - \frac{2^{j-1} }{k-j + 1}
$$
and
let
$$
\gamma(j) = 1 + \eta(j) - \lceil \eta(j) \rceil.
$$

We choose
$$
a' = \frac{k-j}{k-j + 1} + \frac{ \gamma(j) } {r(k+1)-2^{j-1} }
$$
and
$$
b' = \frac{1}{k-j + 1} - \frac{ \gamma(j) } {r(k+1)-2^{j-1} }.
$$

Note that our restriction on $j$ ensures $b' > 0.$
Also, this choice of $a'$ and $b'$ ensures
$ a' - (k-j)b' > 0$. Then, by Corollary \ref{minor arc bound 2}
and ~(\ref{Hua}), we have the following bound for ~(\ref{ineq holder u}):
$$
\int_{\mathfrak{m}} |g(\alpha)|^{2 u_0(j)} \ d \alpha
\ll q^{\epsilon X} q^{a'(2r(k+1) - k - 1)X} q^{b'(2^j - j)X}
\ll q^{ ( 2 u_0(j) - k  -  ( a' - (k-j)b' ) + \epsilon ) X}.
$$
We then obtain the result by proceeding in a similar manner as in the proof of Proposition \ref{prop s_1 first}.
We leave verifying the remaining details of the proof as an exercise for the reader.
\end{proof}

Now we consider the case $k = m p^b + 1$. If $m=1$, we set
$u_2(k) = 2k+3$.
If $m>1$, then we set
\begin{equation}
\label{except eqn 2}
u_2(k) = rk + r -  \Bigg{\lfloor} \frac{(m-1)(1- 1/p)}{4} \Bigg{\rfloor}.
\end{equation}

\begin{prop}
\label{prop u_2 mpb}
Suppose $k = m p^b + 1$ with $p \nmid m$. Let $u_2(k)$ be $2k+3$
when $m=1$ and as in ~(\ref{except eqn 2}) when $m>1$.
If $s \geq u_2(k)$, then there exists $\delta_2 > 0$  such that
$$
\int_{\mathfrak{m}} |g(\alpha)|^{2s} \ d \alpha
\ll q^{ ( 2s - k  -  \delta_2 ) X},
$$
where the implicit constant depends only on $s, q, k, \mathcal{R}'$, and $\delta_2$.
\end{prop}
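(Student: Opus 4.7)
The proof will split into the two cases $m=1$ and $m>1$, following the template of Proposition \ref{prop s_1 mpb} but with the roles of the minor arc moment and the auxiliary moment interpolated so as to halve the exponents. The target estimate is $\int_{\mathfrak{m}}|g(\alpha)|^{2s}\,d\alpha \ll q^{(2s-k-\delta_2)X}$, and in both cases $u_2(k)$ will be chosen just large enough that the interpolation produces strictly positive saving $\delta_2$.

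For $m=1$, where $k=p^b+1$, I will invoke Corollary \ref{minor arc bound 2'} with the dummy variable $s$ there taken to be $2k+2$, which gives
$$
\int_{\mathfrak{m}}|g(\alpha)|^{4k+5}\,d\alpha \ll q^{(4k+5-k-\delta_0+\epsilon)X},\qquad \delta_0=\tfrac{1}{16(p^b+2)}.
$$
Since $u_2(k)=2k+3$ forces $2s\geq 4k+6$, I will peel off the remaining even power (and for larger $s$, any additional powers) by the trivial bound $|g(\alpha)|\leq q^X$; this inflates the exponent by exactly $(2s-(4k+5))X$ and leaves the saving $\delta_0$ untouched, so any $\delta_2<\delta_0$ works.

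For $m>1$, I will mimic the Hölder interpolation of Proposition \ref{prop s_1 mpb}. Writing $2u_0(j)=2r(k+1)a'+2^jb'$ with $a'+b'=1$, Hölder's inequality yields
$$
\int_{\mathfrak{m}}|g(\alpha)|^{2u_0(j)}\,d\alpha \leq \left(\int_{\mathfrak{m}}|g(\alpha)|^{2r(k+1)}\,d\alpha\right)^{a'}\left(\oint|g(\alpha)|^{2^j}\,d\alpha\right)^{b'}.
$$
I will bound the first factor by Corollary \ref{minor arc bound 2} (with $\delta_0=1/(4p^b)$) and the second by Proposition \ref{Hua's lemma}, producing an overall bound of the shape $q^{(2u_0(j)-k-(\delta_0 a'-(k-j)b')+\epsilon)X}$. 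Setting
$$
a'=\frac{k-j}{k-j+\delta_0}+\frac{\epsilon(j)}{2r(k+1)-2^j},\qquad b'=\frac{\delta_0}{k-j+\delta_0}-\frac{\epsilon(j)}{2r(k+1)-2^j}
$$
with $\epsilon(j)>0$ sufficiently small keeps $b'>0$ and makes the saving $\delta_0 a'-(k-j)b'$ strictly positive. The condition $j\leq h_0(k)$ is met at $j=3$ because $m>1$ and $p\nmid m$ imply $h_0(k)\geq 3$. A direct computation, structurally identical to the $s_0(3)$ calculation in Proposition \ref{prop s_1 mpb} but with every $2r$-type coefficient scaled by $1/2$, yields $u_0(3)\leq rk+r-(m-1)(1-1/p)/4$ up to lower-order error; taking the ceiling delivers $u_2(k)=rk+r-\lfloor(m-1)(1-1/p)/4\rfloor$. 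The trivial bound is then used once more to raise $u_0(3)$ to its ceiling value and to handle $s>u_2(k)$.

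The only real obstacle is bookkeeping: checking that the chosen $a',b'$ stay in the correct range, that $\delta_0 a'-(k-j)b'$ remains bounded below by a positive absolute constant (which will be our $\delta_2$), and that the floor function in the definition of $u_2(k)$ absorbs the $\epsilon(j)$ perturbation together with terms of the form $(k-1)/(2p^b(k-3+\delta_0))$ analogous to those appearing in Proposition \ref{prop s_1 mpb}. Since these are exactly the calculations already carried out there, scaled uniformly by $1/2$, no additional analytic input is required.
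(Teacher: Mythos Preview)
Your proposal is correct and follows essentially the same approach as the paper's proof: the $m=1$ case is handled directly from Corollary \ref{minor arc bound 2'} together with the trivial bound, while the $m>1$ case uses the identical H\"older interpolation between Corollary \ref{minor arc bound 2} and Proposition \ref{Hua's lemma}, evaluated at $j=3$ and with the resulting $u_0(3)$ rounded up. The only cosmetic difference is the normalization of the perturbation $\epsilon(j)$ (the paper uses the denominator $r(k+1)-2^{j-1}$ rather than $2r(k+1)-2^j$), which is immaterial since the two differ by a factor of $2$.
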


\begin{proof}
Since the proof is similar to that of Proposition \ref{prop s_1 mpb}, we only give the set up of the proof here.
For the case $m=1$, by a similar reasoning as in Proposition \ref{prop s_1 mpb}, we let
$u_2(k) = 2k+3$, and the result is an immediate consequence of Corollary \ref{minor arc bound 2'}.
We now deal with the case $m>1$. Let $h_0(k)$ be as in the statement of Lemma \ref{counting weyl}.
If $j \leq h_0(k)$, then for any $\epsilon > 0$ we have
~(\ref{Hua}).
We let $2 u_0(j) = 2 r (k+1) a' + 2^j b'$, where $a'+b' = 1$, as before in Proposition \ref{prop u_2}.
Then by H\"{o}lder's inequality, we have ~(\ref{ineq holder u}).
We consider $j$ in the following range: $1 \leq j < k$, $2^j \leq (2r-1)(k+1)+1$ and $j \leq h_0(k)$.


Let $\epsilon(j)$ be a small positive number.
We choose
$$
a' = \frac{k-j}{k-j + \delta} + \frac{ \epsilon(j) } {r(k+1)-2^{j-1} }
$$
and
$$
b' = \frac{\delta}{k-j + \delta} - \frac{ \epsilon(j) } {r(k+1)-2^{j-1} },
$$
where we let $\delta = \delta_0 = 1/(4p^b)$ from Corollary \ref{minor arc bound 2}.

Note that we pick $\epsilon(j)$ sufficiently small such that $b' > 0.$
Also, the range of $j$ we are considering and this choice of $a'$ and $b'$ ensure
$$
\delta a' - (k-j)b' = \frac{ (\delta + k -j) \epsilon(j) } {r(k+1)-2^{j-1} } > 0.
$$
By Corollary \ref{minor arc bound 2}
and ~(\ref{Hua}), we have the following bound for ~(\ref{ineq holder u}):
$$
\int_{\mathfrak{m}} |g(\alpha)|^{2 u_0(j)} \ d \alpha
\ll q^{\epsilon X} q^{a'(2r(k+1) - k - \delta)X} q^{b'(2^j - j)X}
\ll q^{ ( 2 u_0 (j) - k  -  ( \delta a' - (k-j)b' ) + \epsilon ) X}.
$$
We then obtain the result by proceeding in a similar manner as in the proof of Proposition \ref{prop s_1 mpb}.
We leave verifying the remaining details of the proof as an exercise for the reader.
\end{proof}

For $\psi(z)$ a function of positive variable $z$, recall we denote $\widetilde{E}_{s,k}(N, \psi)$ to be
the set of $n \in I_N \cap \mathbb{J}_q^k[t]$ for which
\begin{equation}
\label{ineq exceptional set}
\Big{|}  R_{s,k}(n) - \mathfrak{S}_{s,k}(n) J_{\infty}(n) q^{(s-k)P} \Big{|} > q^{(s-k)P} \psi(q^P)^{-1}.
\end{equation}

By Theorem \ref{thm major arcs}, for $s \geq 2k+1$ and any polynomial
$n \in \widetilde{E}_{s,k}(N, \psi)$ we have
\begin{equation}
\label{eqn majoor}
\int_{\mathfrak{M}} g(\alpha)^s e(-n \alpha) \ d\alpha = \mathfrak{S}_{s,k}(n) J_{\infty}(n) q^{(s-k)P}
+ \bigO{q^{(s-k- 2 \epsilon )P}},
\end{equation}
for sufficiently small $\epsilon > 0$.
Hence, it follows by ~(\ref{second integral}) that
\begin{eqnarray}
\label{major and minor - 1'}
R_{s,k}(n) 
&=& \mathfrak{S}_{s,k}(n) J_{\infty}(n) q^{(s-k)P}
+
\int_{\mathfrak{m}} g(\alpha)^s e(-n \alpha) \ d\alpha
\\
\notag
&&\phantom{123456789123456789123456789123} +
\bigO{q^{(s-k-2\epsilon)P}}.
\end{eqnarray}
By ~(\ref{ineq exceptional set}), ~(\ref{major and minor - 1'}) and the triangle
inequality, we see that there exists a constant $C_1 > 0$ such that
given any $n \in \widetilde{E}_{s,k}(N, \psi)$,
\begin{equation}
\label{ineq1}
\Big{|}  \int_{\mathfrak{m}} g(\alpha)^s e(-n \alpha) \ d\alpha  \Big{|} + C_1 q^{(s-k-2\epsilon)P}  > q^{(s-k)P} \psi(q^P)^{-1}.
\end{equation}
Suppose $\psi(z) < C_2 z^{\epsilon}$ for some constant $C_2 > 0$. Then it follows that $C_1 q^{(s-k-2\epsilon)P} < C_3 q^{(s-k-\epsilon)P} \psi(q^P)^{-1}$
for some constant $C_3>0$. Now there exists $M_0 > 0$ such that $C_3 q^{- \epsilon P} < 1/2$ for all $P \geq M_0$. Therefore, for $P$ sufficiently large we have that given any $n \in \widetilde{E}_{s,k}(N, \psi)$,
\begin{equation}
\label{ineq2}
\Big{|}  \int_{\mathfrak{m}} g(\alpha)^s e(-n \alpha) \ d\alpha  \Big{|} > \frac12 q^{(s-k)P} \psi(q^P)^{-1}.
\end{equation}


Let $E = | \widetilde{E}_{s,k}(N, \psi) |.$ Define the complex numbers $\eta(n)$, depending on $s$ and $k$,
for $n \in \widetilde{E}_{s,k}(N, \psi)$ by means of the equation
$$
\Big{|}  \int_{\mathfrak{m}} g(\alpha)^s e(-n \alpha) \ d\alpha  \Big{|}
= \eta(n) \int_{\mathfrak{m}} g(\alpha)^s e(-n \alpha) \ d\alpha.
$$
Clearly, $ | \eta(n) | = 1$ for all $n \in \widetilde{E}_{s,k}(N, \psi)$.
Define the exponential sum $K(\alpha)$ by
\begin{equation}
K(\alpha) = \sum_{  n \in \widetilde{E}_{s,k}(N, \psi)  }  \eta(n) e(n \alpha).
\end{equation}
Then, it follows from
~(\ref{ineq2}) that for $P$ sufficiently large
\begin{eqnarray}
\notag
\frac12 q^{(s-k)P} \psi(q^P)^{-1} E &<& \sum_{
n \in \widetilde{E}_{s,k}(N, \psi)
}
\eta(n) \int_{\mathfrak{m}} g(\alpha)^s e(-n \alpha) \ d\alpha
\\
\label{ineq3}
&=& \int_{\mathfrak{m}} g(\alpha)^s K(- \alpha) \ d\alpha.
\end{eqnarray}

We apply Cauchy-Schwartz inequality to the right hand side of ~(\ref{ineq3}) to obtain
\begin{equation}
\label{ineq 4}
\frac12 q^{(s-k)P} \psi(q^P)^{-1} E < \left( \int_{\mathfrak{m}} |g(\alpha)|^{2s} \ d\alpha \right)^{1/2} \left( \int_{\mathfrak{m}} |K( - \alpha)|^2 \ d\alpha \right)^{1/2}.
\end{equation}
We note that we have established the above inequality ~(\ref{ineq 4}) assuming $s \geq 2k+1$ here.
The orthogonality relation ~(\ref{orthogonality relation}) gives us
\begin{equation}
\label{on E}
\oint |K( \alpha)|^2 \ d\alpha =  \sum_{  n \in \widetilde{E}_{s,k}(N, \psi)  } 1 = E.
\end{equation}

With this set up, we are ready to prove Theorems \ref{exceptional stuff}, \ref{exceptional stuff 2}, \ref{exceptional stuff 3},
and \ref{exceptional stuff 3'}.
\begin{proof}[Proof of Theorems \ref{exceptional stuff}, \ref{exceptional stuff 2}, \ref{exceptional stuff 3}, and \ref{exceptional stuff 3'}]
Recall we defined $X = P+1$. By Propositions \ref{prop u_2} and \ref{prop u_2 mpb}, for $s \geq u_2(k)$ we know there exists
$\delta_2 > 0$ such that
$$
\left( \int_{\mathfrak{m}} |g(\alpha)|^{2s} \ d\alpha \right)^{1/2}
\ll
q^{(s - k/2 - \delta_2/2)P}.
$$
Therefore, we can further bound the right hand side of ~(\ref{ineq 4}) by the above inequality and ~(\ref{on E}),
and obtain for $s \geq \max \{ u_2(k), 2k+1 \}$,
$$
\frac12 q^{(s-k)P} \psi(q^P)^{-1} E^{1/2} < \left( \int_{\mathfrak{m}} |g(\alpha)|^{2s} \ d\alpha \right)^{1/2} \ll q^{(s - k/2 - \delta_2 / 2)P},
$$
which simplifies to
\begin{equation}
\label{bound on E first}
E \ll  q^{(k - \delta_2)P}  \psi(q^P)^{2}.
\end{equation}
Fix $\epsilon >0$ sufficiently small and let $\psi(z)$ be such that $\psi(q^P) \ll q^{\epsilon P/2}$. Then 
we have by ~(\ref{bound on E first}) that
$$
E \ll q^{(k - \delta_2 + \epsilon) P} <  q^{ \ord n -  (\delta_2  - \epsilon) P} \ll q^{N - (\delta_2  - \epsilon) \frac{N}{k} } = o(q^{N}).
$$
Therefore, we obtain $\widetilde{G}_q^+(k) \leq \max \{ u_2(k), 2k+1 \}$.
We then simplify $u_2(k)$ via ~(\ref{defn of r}) to obtain the estimates given in the statement of Theorem \ref{exceptional stuff}. When $k<p$, we see that
$u_2(k)$ given in ~(\ref{def u_2}) is identical to $u_1(k)$ defined in \cite{W1}. Consequently, our estimates for
$\widetilde{G}_q^+(k)$ when $k < p$ are identical to the estimates of $\widetilde{G}^+(k)$ obtained in \cite{W1}.
We have now completed the proof of Theorems \ref{exceptional stuff} and \ref{exceptional stuff 2}.

Finally, to prove Theorems \ref{exceptional stuff 3} and \ref{exceptional stuff 3'}, we substitute
~(\ref{on E}) into ~(\ref{ineq 4}), apply Corollary \ref{minor arc bound 2} or Corollary \ref{minor arc bound 2'} (depending on
$k$ and $p$), and obtain for $P$ sufficiently large
$$
\frac12 q^{(s-k)P} \psi(q^P)^{-1} E^{1/2} < \left( \int_{\mathfrak{m}} |g(\alpha)|^{2s} \ d\alpha \right)^{1/2} \ll q^{(s - k/2 - \delta_0 / 2 + \epsilon/2)P}.
$$
Rearranging the above inequality yields
$$
E \ll q^{(k - \delta_0 + \epsilon) P} \psi(q^P)^{2},
$$
as desired.
\end{proof}

\end{document}